\newtheoremstyle{theorem}%name
{5pt} % space above
{5pt} % space below
{\sl} % bofy font
{\parindent} % ident - empty=no indent, \parindent= paragraph indent
{\bf} % thm head font
{. } % punctuation after thm head
{ } % space after thm head: `` ``=normal \newline=linebreak
{} % thm head specification
\theoremstyle{theorem}
\newtheorem{theorem}{Theorem}
\theoremstyle{theorem}
\newtheorem{corollary}[theorem]{Corollary}
\newtheorem{proposition}[theorem]{Proposition}
\newtheoremstyle{defi}%name
{5pt} % space above
{5pt} % space below
{\rm} % bofy font
{\parindent} % ident - empty=no indent, \parindent= paragraph indent
{\bf} % thm head font
{. } % punctuation after thm head
{ } % space after thm head: `` ``=normal \newline=linebreak
{} % thm head specification
\theoremstyle{defi}
\newtheorem{definition}[theorem]{Definition}
\theoremstyle{defi}
\newtheorem{remark}[theorem]{Remark}
\newtheorem{example}[theorem]{Example}
\begin{document}
\centerline{\hfill}
\centerline{\hfill}
\centerline{\hfill}
%\centerline{\hfill}
%\centerline{\hfill}                        
			%title
\begin{center}
\bf
INTEGRAL OPERATORS WITH INFINITELY SMOOTH BI-CARLEMAN KERNELS OF MERCER TYPE
\end{center}
%\centerline{\hfill}
%\centerline{\hfill}
\vskip.5cm
                         %author
\centerline{Igor M. Novitskii}
                         %address
\begin{center}
Khabarovsk Division\\
Institute of Applied Mathematics \\
Far-Eastern Branch of the Russian Academy of Sciences \\
54, Dzerzhinskiy Street, Khabarovsk 680 000, RUSSIA\\
e-mail: novim@iam.khv.ru
\end{center}
                                %abstract
\noindent{\bf Abstract:} 
With the aim of applications to solving general integral equations,
we introduce and study in this paper a special class of bi-Carleman
kernels on $\mathbb{R}\times\mathbb{R}$, called $K^\infty$ kernels of Mercer
type, whose property of being infinitely smooth is stable under passage to
certain left and right multiples of their associated integral operators. An
expansion theorem in absolutely and uniformly convergent bilinear series
concerning kernels of this class is proved extending to a general non-Hermitian
setting both Mercer's and Kadota's Expansion Theorems for positive definite
kernels. Another theorem proved in this paper identifies families of those
bounded operators on a separable Hilbert space $\mathcal{H}$ that can be
simultaneously transformed by the same unitary equivalence transformation into
bi-Carleman integral operators on $L^2(\mathbb{R})$, whose kernels are
$K^\infty$ kernels of Mercer type; its singleton version implies in particular
that any bi-integral operator is unitarily equivalent to an integral operator
with such a kernel.
\vskip\baselineskip
\noindent {\bf AMS Subject Classification:} 47G10, 45P05, 45A05, 47B33,
47N20, 47A10
\par
\noindent{\bf Key Words:} Hilbert space operator,
                          bounded integral linear operator,
                          characterization theorems for integral operators,
                          bi-Carleman kernel,
                          bilinear series expansions of kernels,
                          Mercer's Theorem, Kadota's Theorem, 
                          linear integral equation

\section{Introduction and the Main Results}
In the theory of general linear integral equations in $L^2$ spaces,
the equations with bounded infinitely differentiable bi-Carleman kernels
(termed $K^\infty$ kernels) should and do lend themselves well to solution
by approximation and variational methods. The question of whether a second-kind
integral equation with arbitrary kernel can be reduced to an equivalent one
with a $K^\infty$ kernel was positively answered using a unitary-reduction
method by the author \cite{nov:IJPAM2}. In the present paper, we enrich the
$K^\infty$ kernels by imposing an extra condition (Definition~\ref{MerKernel})
that guarantees the kernels and their partial and strong derivatives to be
expandable in absolutely and uniformly convergent bilinear series which may
also be relevant when solving integral equations. We then show that that
condition can always be achieved by means of a unitary reduction which
involves no loss of generality in the study of integral equations of the
second kind.
\par
Before we can write down our main results, we need to fix the terminology and
notation and to give some definitions and preliminary material. Throughout
this paper, $\mathcal{H}$ is a complex, separable, infinite-dimensional
Hilbert space with the inner product
$\langle\cdot,\cdot\rangle_{\mathcal{H}}$ and the norm
$\left\|\cdot\right\|_{\mathcal{H}}$. If $L\subset\mathcal{H}$, we write
$\overline{L}$ for the norm closure of $L$ in $\mathcal{H}$, and
$\mathrm{Span}(L)$ for the norm closure of the set of all linear combinations
of elements of $L$. Let $\mathfrak{R}({\mathcal{H}})$ be the Banach algebra of
all bounded linear operators acting on ${\mathcal{H}}$. For an operator $A$ of
$\mathfrak{R}(\mathcal{H})$, $A^*$ stands for the adjoint to $A$ with respect
to $\langle\cdot,\cdot\rangle_{\mathcal{H}}$,
$\mathrm{Ran\,}A=\left\{Af\mid f\in\mathcal{H}\right\}$ for the range of $A$.
An operator $A\in\mathfrak{R}(\mathcal{H})$ is said to be \textit{invertible}
if it has an inverse which is also in $\mathfrak{R}(\mathcal{H})$, that is, if
there is an operator $B\in\mathfrak{R}(\mathcal{H})$ for which
$BA=AB=I_{{\mathcal{H}}}$ where $I_{{\mathcal{H}}}$ is the identity operator
on ${\mathcal{H}}$; $B$ is denoted by $A^{-1}$. An operator
$P\in\mathfrak{R}(\mathcal{H})$ is called \textit{positive} and denoted by
$P\ge0$ if $\langle Px,x\rangle_{\mathcal{H}}\ge 0$ for all $x\in\mathcal{H}$.
Also recall that a normal operator $A\in\mathfrak{R}(\mathcal{H})$
($AA^*=A^*A$) is called \textit{diagonal}(\textit{izable}) if $\mathcal{H}$
admits an orthonormal basis consisting of eigenvectors of $A$.
If $T\in\mathfrak{R}({\mathcal{H}})$, define the operator families
$\mathcal{M}(T)$ and $\mathcal{M}^+(T)$ by
\begin{gather}
\mathcal{M}(T)=\left(T\mathfrak{R}({\mathcal{H}})\cup
T^*\mathfrak{R}({\mathcal{H}})\right)\cap
\left(\mathfrak{R}({\mathcal{H}})T\cup\mathfrak{R}({\mathcal{H}})T^*\right),
\label{MT}\\
\mathcal{M}^{+}(T)=\{P\in\mathcal{M}(T)\mid P\ge0\}\label{Pc},
\end{gather}
where
$S\mathfrak{R}({\mathcal{H}})=\{SV\mid V\in \mathfrak{R}({\mathcal{H}})\}$,
$\mathfrak{R}({\mathcal{H}})S=\{VS\mid V\in \mathfrak{R}({\mathcal{H}})\}$.
The following are some simple remarks about families just defined.
\begin{remark}\label{remmt}
First note from \eqref{MT} that $\mathcal{M}(S)=\mathcal{M}(S^*)$, and that
the set $\mathcal{M}(S)$ is alternatively defined as
\begin{gather*}
\begin{split}
\mathcal{M}(S)=&
    \left(S\mathfrak{R}(\mathcal{H})\cap\mathfrak{R}(\mathcal{H})S\right)\cup
    \left(S^*\mathfrak{R}(\mathcal{H})\cap\mathfrak{R}(\mathcal{H})S^*\right)
           \\
&\cup\left(S\mathfrak{R}(\mathcal{H})\cap\mathfrak{R}(\mathcal{H})S^*\right)
\cup\left(S^*\mathfrak{R}(\mathcal{H})\cap\mathfrak{R}(\mathcal{H})S\right),
\end{split}
\end{gather*}
which is the same as saying that an operator $A$ belongs to $\mathcal{M}(S)$
if and only if there exist two operators $M$, $N\in\mathfrak{R}(\mathcal{H})$
such that at least one of the four relations
\begin{equation}\label{relations}
\begin{split}
A=SM&=NS,\quad A=S^*M=NS^*, \\&
A=SM=NS^*,\quad \text{or}\ A=S^*N=MS,
\end{split}
\end{equation}
holds. The family $\mathcal{M}^{+}(S)$ can then be characterized as the set of
all positive operators $P\in\mathfrak{R}(\mathcal{H})$ that are expressible as
$P=SB$, or as $P=BS$, where  $B\in\mathfrak{R}(\mathcal{H})$, that is to say,
$\mathcal{M}^{+}(S)=
\left\{P\in S\mathfrak{R}(\mathcal{H})\cup\mathfrak{R}(\mathcal{H})S\mid P
\ge 0\right\}$. If, finally, $S\in\mathcal{M}(T)$, then, by \eqref{relations},
there are four operators $K$, $L$, $Q$, $R\in\mathfrak{R}(\mathcal{H})$ such
that
$
\mathcal{M}(S)=\left(TK\mathfrak{R}(\mathcal{H})\cup T^*L\mathfrak{R}(\mathcal{H})\right)\cap
\left(\mathfrak{R}(\mathcal{H})QT^*\cup \mathfrak{R}(\mathcal{H})RT)\right)
$,
whence it follows via \eqref{MT} that $\mathcal{M}(S)\subseteq\mathcal{M}(T)$,
and, consequently, $\mathcal{M}^{+}(S)\subseteq\mathcal{M}^{+}(T)$.
\end{remark}
\begin{definition}\label{mfac}
A factorization of an operator $T\in\mathfrak{R}({\mathcal{H}})$ into the
product
\begin{equation}\label{f1}
       T = WV^*,      
\end{equation}
provided that the operators $V$, $W\in\mathfrak{R}({\mathcal{H}})$ are subject
to the provisos
\begin{equation}\label{f2}
VV^*\in\mathcal{M}^{+}(T),\quad WW^*\in\mathcal{M}^{+}(T),
\end{equation}
is called an $\mathcal{M}$ \textit{factorization for $T$}.
\end{definition}
\begin{example}\label{TWV}
An ingenuous example of an $\mathcal{M}$ factorization for any
$T\in\mathfrak{R}(\mathcal{H})$ is easy to come by. Just put $W=UP$ and $V=P$,
where $P$ is the positive square root of $|T|=(T^*T)^{\frac1{2}}$ and $U$ is
the partially isometric factor in the polar decomposition $T=U|T|$; since
$T=WV^*$, $WW^*=U|T|U^*=TU^*=UT^*\in\mathcal{M}^{+}(T)$, and
$VV^*=|T|=T^*U=U^*T\in\mathcal{M}^{+}(T)$, it follows that the requirements
\eqref{f1}, \eqref{f2} are satisfied.
\end{example}
The set of \textit{regular values} for the operator
$T\in\mathfrak{R}(\mathcal{H})$, denoted by $\Pi(T)$, is the set of complex
numbers $\lambda$ such that the operator $I_{{\mathcal{H}}}-\lambda T$ is
invertible, that is, it has an inverse
$R_\lambda(T)=\left(I_{{\mathcal{H}}}-\lambda T\right)^{-1}
\in\mathfrak{R}({\mathcal{H}})$ that satisfies
\begin{equation}\label{eqress}
\left(I_{{\mathcal{H}}}-\lambda T\right)R_\lambda(T)=R_\lambda(T)
\left(I_{\mathcal{H}}-\lambda T\right)=I_{\mathcal{H}}.
\end{equation}
The operator
\begin{equation}\label{eqresf}
T_\lambda:= TR_\lambda(T)\ (=R_\lambda(T)T)
\end{equation}
of $\mathcal{M}(T)$ is then referred to as the \textit{Fredholm resolvent} of
$T$ at $\lambda$. Remark that if $\lambda$ is a regular value for $T$, then,
for each fixed $g\in\mathcal{H}$, the (unique) solution $f\in\mathcal{H}$
to the second-kind equation $f-\lambda Tf=g$ may be written as
\begin{equation}\label{equnsol}
f=g+\lambda T_\lambda g.
\end{equation}
\par
Let $\mathbb{R}$ be the real line $(-\infty,+\infty)$ equipped with the Lebesgue
measure, and let $L^2=L^2(\mathbb{R})$ be the Hilbert space of (equivalence classes
of) measurable complex-valued functions on $\mathbb{R}$ equipped with the inner
product $\langle f,g\rangle_{L^2}=\int_{\mathbb{R}}f(s)\overline{g(s)}\,ds$ and the norm
$\|f\|_{L^2}=\langle f,f\rangle^{\frac{1}2}$. A linear operator $T\colon L^2\to L^2$
is \textit{integral} if there is a complex-valued measurable function
$\boldsymbol{T}$ (\textit{kernel}) on the Cartesian product
$\mathbb{R}^2=\mathbb{R}\times\mathbb{R}$ such that
\begin{equation*}
               (Tf)(s)=\int_{\mathbb{R}} \boldsymbol{T}(s,t)f(t)\,dt
\end{equation*}
for every $f\in L^2$ and almost every $s\in\mathbb{R}$. Recall
\cite[Theorem 3.10]{Halmos:Sun} that integral operators are bounded, and need
not be compact. A kernel $\boldsymbol{T}$ on $\mathbb{R}^2$ is said to be
\textit{Carleman} if $\boldsymbol{T}(s,\cdot)\in L^2$ for almost every fixed
$s$ in $\mathbb{R}$. To each Carleman kernel $\boldsymbol{T}$ there corresponds
a \textit{Carleman function} $\boldsymbol{t}\colon \mathbb{R}\to L^2$ defined by
$\boldsymbol{t}(s)=\overline{\boldsymbol{T}(s,\cdot)}$ for all $s$ in
$\mathbb{R}$ for which $\boldsymbol{T}(s,\cdot)\in L^2$. The Carleman kernel
$\boldsymbol{T}$ is called \textit{bi-Carleman} in case its conjugate
transpose kernel $\boldsymbol{T}^{\boldsymbol{\prime}}$
($\boldsymbol{T}^{\boldsymbol{\prime}}(s,t)=\overline{\boldsymbol{T}(t,s)}$)
is also a Carleman kernel. Associated with the conjugate transpose
$\boldsymbol{T}^{\boldsymbol{\prime}}$ of every bi-Carleman kernel
$\boldsymbol{T}$ there is therefore a Carleman function
$\boldsymbol{t}^{\boldsymbol{\prime}}\colon \mathbb{R}\to L^2$ defined by
$\boldsymbol{t}^{\boldsymbol{\prime}}(s)
=\overline{\boldsymbol{T}^{\boldsymbol{\prime}}(s,\cdot)}
\left(=\boldsymbol{T}(\cdot,s)\right)$ for all $s$ in $\mathbb{R}$
for which $\boldsymbol{T}^{\boldsymbol{\prime}}(s,\cdot)\in L^2$.
With each bi-Carleman kernel $\boldsymbol{T}$, we therefore associate the pair
of Carleman functions $\boldsymbol{t}$,
$\boldsymbol{t}^{\boldsymbol{\prime}}\colon \mathbb{R}\to L^2$, both defined, via
$\boldsymbol{T}$, as above. An integral operator whose kernel is Carleman
(resp., bi-Carleman) is referred to as the \textit{Carleman}
(resp., \textit{bi-Carleman}) operator. The integral operator $T$ is called
\textit{bi-integral} if its adjoint $T^*$ is also an integral operator;
in that case if $\boldsymbol{T}^{\boldsymbol{\ast}}$ is the kernel of $T^*$
then, in the above notation,
$\boldsymbol{T}^{\boldsymbol{\ast}}(s,t)=\boldsymbol{T}^{\boldsymbol{\prime}}(s,t)$
for almost all $(s,t)\in\mathbb{R}^2$ (see, e.g., \cite[Theorem 7.5]{Halmos:Sun}).
A bi-Carleman operator is always a bi-integral operator, but not conversely.
Henceforth in this paper, kernels and Carleman functions will always be denoted
by bold-face uppercase and bold-face lowercase letters, respectively.
\par
\begin{remark}\label{rem1} From the viewpoint of the foundations of integral
equation theory, the bad news about Fredholm resolvents is that the property
of being an integral operator is not shared in general by Fredholm resolvents
of integral operators; an example of an integral operator whose Fredholm
resolvent at any non-zero regular value is not an integral operator can be
found in \cite{Kor:nonint1}, or in \cite[Section~5, Theorem~8]{Kor:alg}. But,
fortunately, this phenomenon can never be extended to Carleman operators due
to an algebraic property of these operators, a property which is the content
of the so called ``Right-Multiplication Lemma'' (see \cite[Theorem 11.6]{Halmos:Sun}
or \cite[Corollary IV.2.8]{Kor:book1}):
\begin{proposition}
\label{rimlt}
Let $T$ be a Carleman operator, let $\boldsymbol{t}$ be the Carleman
function associated with the inducing Carleman kernel of $T$, and let
$A\in\mathfrak{R}\left(L^2\right)$ be arbitrary. Then the product operator $TA$
is also a Carleman operator, and the composition function $A^*(\boldsymbol{t}(\cdot))\colon \mathbb{R}\to L^2$
is the Carleman function corresponding with its kernel.
\end{proposition}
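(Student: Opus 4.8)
The plan is to recast the Carleman condition in inner-product form and then read off the kernel of $TA$ through the adjoint $A^*$. With $\boldsymbol{t}(s)=\overline{\boldsymbol{T}(s,\cdot)}$, the defining representation of $T$ says exactly that
\[
(Tf)(s)=\int_{\mathbb{R}}\boldsymbol{T}(s,t)f(t)\,dt=\langle f,\boldsymbol{t}(s)\rangle_{L^2}
\]
for every $f\in L^2$ and almost every $s$. Replacing $f$ by $Af$ and shifting $A$ to the other side of the inner product gives $(TAf)(s)=\langle Af,\boldsymbol{t}(s)\rangle_{L^2}=\langle f,A^*\boldsymbol{t}(s)\rangle_{L^2}$ for a.e. $s$. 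This already exhibits the natural candidate for the Carleman function of $TA$, namely the composition $\boldsymbol{g}:=A^*(\boldsymbol{t}(\cdot))\colon\mathbb{R}\to L^2$, and, correspondingly, the candidate kernel $\boldsymbol{G}(s,t):=\overline{\boldsymbol{g}(s)(t)}$, for which formally $(TAf)(s)=\int_{\mathbb{R}}\boldsymbol{G}(s,t)f(t)\,dt$.

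The Carleman property of the candidate is the easy half. Since $A^*\in\mathfrak{R}(L^2)$ and $\boldsymbol{t}(s)\in L^2$ for a.e. $s$, the vector $\boldsymbol{g}(s)=A^*\boldsymbol{t}(s)$ lies in $L^2$ with $\|\boldsymbol{g}(s)\|_{L^2}\le\|A^*\|\,\|\boldsymbol{t}(s)\|_{L^2}$ for a.e. $s$; hence $\boldsymbol{G}(s,\cdot)=\overline{\boldsymbol{g}(s)}\in L^2$ for a.e. $s$, so that $\boldsymbol{G}$, once it is known to be a kernel, is automatically Carleman, and its associated Carleman function $\overline{\boldsymbol{G}(s,\cdot)}$ is precisely $\boldsymbol{g}(s)=A^*(\boldsymbol{t}(s))$, as required.

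The only substantive point---and the main obstacle---is to verify that $\boldsymbol{G}$ is a genuine kernel, i.e. a measurable function on $\mathbb{R}^2$; the whole difficulty is joint measurability in $(s,t)$, since $A^*$ acts globally in the $t$-variable and cannot be inserted pointwise. I would route this through strong ($L^2$-valued) measurability. The map $\boldsymbol{t}\colon\mathbb{R}\to L^2$ is weakly measurable, because for each fixed $h\in L^2$ the scalar function $s\mapsto\langle h,\boldsymbol{t}(s)\rangle_{L^2}=(Th)(s)$ is measurable; since $L^2$ is separable, the Pettis measurability theorem upgrades this to strong measurability, and composition with the bounded operator $A^*$ then shows that $\boldsymbol{g}=A^*\circ\boldsymbol{t}$ is strongly measurable as well. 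Finally I would invoke the standard fact that a strongly measurable $L^2(\mathbb{R})$-valued function admits a jointly measurable representative on $\mathbb{R}^2$: writing $\boldsymbol{g}$ as an almost-everywhere norm-limit of simple functions $\boldsymbol{g}_n$ with jointly measurable representatives $G_n$ and $\sum_n\|\boldsymbol{g}_{n+1}(s)-\boldsymbol{g}_n(s)\|_{L^2}<\infty$ for a.e. $s$, Minkowski's inequality forces $\sum_n|G_{n+1}(s,t)-G_n(s,t)|<\infty$ for a.e. $t$, so that $G_n(s,t)$ converges a.e. to a jointly measurable version of $\boldsymbol{g}(s)(t)$. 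Conjugating yields the measurable kernel $\boldsymbol{G}$, and the identity $(TAf)(s)=\int_{\mathbb{R}}\boldsymbol{G}(s,t)f(t)\,dt$ obtained in the first step then confirms that $\boldsymbol{G}$ is the Carleman kernel of $TA$ with Carleman function $A^*(\boldsymbol{t}(\cdot))$.
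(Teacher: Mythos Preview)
Your argument is correct and is, in fact, essentially the standard proof of the Right-Multiplication Lemma. Note, however, that the paper does not supply its own proof of this proposition: it is quoted as a known result, with explicit references to \cite[Theorem~11.6]{Halmos:Sun} and \cite[Corollary~IV.2.8]{Kor:book1}. So there is no ``paper's proof'' to compare against; what you have written is a self-contained justification along the lines of those references.

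One small technical remark on your last paragraph: to guarantee that the approximating sequence $\boldsymbol{g}_n$ satisfies $\sum_n\|\boldsymbol{g}_{n+1}(s)-\boldsymbol{g}_n(s)\|_{L^2}<\infty$ for a.e.\ $s$, mere pointwise-a.e.\ convergence of generic simple functions is not enough. The clean fix is to allow countably-valued (rather than finitely-valued) measurable approximants, e.g.\ let $\{v_k\}$ be dense in $L^2$ and set $\boldsymbol{g}_n(s)$ equal to the first $v_k$ with $\|\boldsymbol{g}(s)-v_k\|_{L^2}<2^{-n}$; such $\boldsymbol{g}_n$ still have jointly measurable representatives $G_n(s,t)=\sum_k v_k(t)\chi_{A_k^{(n)}}(s)$, and now $\|\boldsymbol{g}_{n+1}(s)-\boldsymbol{g}_n(s)\|_{L^2}<2^{-n+1}$ for every $s$. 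Your Minkowski step then goes through: $\bigl\|\sum_n|G_{n+1}(s,\cdot)-G_n(s,\cdot)|\bigr\|_{L^2}\le\sum_n\|\boldsymbol{g}_{n+1}(s)-\boldsymbol{g}_n(s)\|_{L^2}<\infty$, hence $G_n(s,t)$ converges for a.e.\ $t$. With this adjustment the proof is complete.
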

Thus, by \eqref{eqresf}, the Fredholm resolvent of a Carleman (resp.,
bi-Carleman) operator at its every regular value is always a Carleman (resp.,
bi-Carleman) operator. Furthermore, if the operator $S$ is bi-Carleman, then
it follows from the above proposition and \eqref{relations} that the
corresponding family $\mathcal{M}(S)$ must consist of bi-Carleman operators 
only. In the general theory of integral equations of the second kind in $L^2$, that is,
equations of the form
\begin{equation}\label{skequ}
f(s)-\lambda\int_{\mathbb{R}} \boldsymbol{T}(s,t)f(t)\,dt =g(s)\quad
\text{for almost all $s\in\mathbb{R}$},
\end{equation}
it is customary to call the kernel $\boldsymbol{T}_\lambda$ (when it exists)
of the Fredholm resolvent $T_\lambda$ of a bi-integral operator $T$, induced
on $L^2$ by the kernel $\boldsymbol{T}$, a \textit{resolvent kernel for
$\boldsymbol{T}$ at $\lambda$}. Once the resolvent kernel
$\boldsymbol{T}_\lambda$ comes to be known, one can express the $L^2$ solution
$f$ to equation \eqref{skequ} in a direct fashion as
\begin{equation*}\label{solution}
            f(s)=g(s)+\lambda\int_{\mathbb{R}} \boldsymbol{T}_\lambda(s,t)g(t)\,dt,
\end{equation*}
regardless of the particular choice of the function $g$ of $L^2$ (cf.
\eqref{equnsol}). Therefore, in the case when the kernel $\boldsymbol{T}$ is
Carleman and $\lambda\in\Pi(T)$ is arbitrary, the problem of solving equation
\eqref{skequ} may be reduced to the problem of explicitly constructing (in
terms of $\boldsymbol{T}$) the resolvent kernel $\boldsymbol{T}_\lambda$ which
is a priori known to exist. For a precise formulation of this latter problem
(not solved here) and for comments to the solution of some of its special cases
we refer to the works by Korotkov \cite{Kor:problems}, \cite{Kor:alg} (in both
the references, see Problem~4 in \S 5). Here we only recall that as long as a
measurable kernel $\boldsymbol{T}$ of \eqref{skequ} is bi-Carleman but
otherwise unrestricted, there seems to be as yet no analytic machinery for
explicitly constructing its resolvent kernel $\boldsymbol{T}_\lambda$ at all
$\lambda\in\Pi(T)$.
With the objective of making problems like this more tractable,
we propose in the present paper to treat some analytical restrictions
which can be imposed on bi-Carleman kernels $\boldsymbol{T}$ without loss of
generality as far as the solving of such equations as \eqref{skequ} is
concerned. These restrictions will be introduced presently by means of
Definitions~\ref{Kmkernel} and \ref{MerKernel}, and the generality will be
preserved using unitary reductions by means of Theorem~\ref{infsmooth} below.
\end{remark}
\par
Throughout this paper, the symbols $\mathbb{C}$, $\mathbb{N}$, and $\mathbb{Z}$, refer to
the complex plane, the set of all positive integers, and the set of all
integers, respectively, and each of the letters $i$ and $j$ is reserved for
all non-negative integers. $C(X,B)$, where $B$ is a Banach space (with norm
$\|\cdot\|_B$), denotes the Banach space (with the norm
$\|f\|_{C(X,B)}=\sup_{x\in X}\,\|f(x)\|_B$) of continuous $B$-valued functions
defined on a locally compact space $X$ and \textit{vanishing at infinity}
(that is, given any $f\in C(X,B)$ and $\varepsilon>0$, there exists a compact
subset $X(\varepsilon,f) \subset X$ such that $\|f(x)\|_{B}<\varepsilon$
whenever $x\not\in X(\varepsilon,f)$). In addition, we introduce the following
notation: if an equivalence class $f\in L^2$ contains a function belonging to
$C(\mathbb{R},\mathbb{C})$, we write $[f]$ to mean that function, and
we denote the $i$-th derivative of $[f]$, if exists, by $[f]^{(i)}$. We also say
that the series $\sum_n f_n$ is \textit{$B$-absolutely convergent in $C(X,B)$}
if $f_n\in C(X,B)$ ($n\in\mathbb{N}$) and the series $\sum_n \|f_n(x)\|_B$
converges in $C(X,\mathbb{R})$ (the sum notation $\sum_n$ will always be used
instead of the more detailed symbol $\sum_{n=1}^\infty$).
\begin{definition}\label{Kmkernel}
A bi-Carleman kernel $\boldsymbol{T}\colon \mathbb{R}^2\to\mathbb{C}$
is called a \textit{$K^\infty$ kernel} (see \cite{nov:CEJM}) if it satisfies the
three generally independent conditions:
\par
(i) the function $\boldsymbol{T}$ and all its partial derivatives of all orders
are in $C\left(\mathbb{R}^2,\mathbb{C}\right)$,
\par
(ii) the Carleman function $\boldsymbol{t}$,
$\boldsymbol{t}(s)=\overline{\boldsymbol{T}(s,\cdot)}$, and its (strong)
derivatives, $\boldsymbol{t}^{(i)}$, of all orders are in
$C\left(\mathbb{R},L^2\right)$,
\par
(iii) the Carleman function $\boldsymbol{t}^{\boldsymbol{\prime}}$,
$\boldsymbol{t}^{\boldsymbol{\prime}}(s)=
\overline{\boldsymbol{T}^{\boldsymbol{\prime}}(s,\cdot)}=
\boldsymbol{T}(\cdot,s)$, and its (strong) derivatives,
$(\boldsymbol{t}^{\boldsymbol{\prime}})^{(i)}$, of all orders are in
$C\left(\mathbb{R},L^2\right)$.
\end{definition}
To deal with $K^\infty$ kernels, we let $D_r^i$ denote the $i$-th order partial
derivative operator with respect to the $r$-th variable, and we let $D_{r_1,r_2}^{i}$
denote the product (in the operator sense) of $i$ factors, each of which is
the mixed second-order partial derivative operator $D_{r_2}^1D_{r_1}^1$, so,
for instance, $D_{r_1,r_2}^{2}=D_{r_2}^1D_{r_1}^1D_{r_2}^1D_{r_1}^1$.
\begin{definition}\label{MerKernel}
Let $\boldsymbol{T}$ be a $K^\infty$ kernel and let $T$ be the integral operator
it induces on $L^2$. We say that the $K^\infty$ kernel $\boldsymbol{T}$ is of
\textit{Mercer type} if every operator belonging to $\mathcal{M}(T)$ is an
integral operator with a $K^\infty$ kernel.
\end{definition}
The idea of a counter-example in \cite[Section~2]{Nov:Lon} may be used
to show that there might be $K^\infty$ kernels which are not of Mercer type. The (in a sense
algebraic) condition on a $K^\infty$ kernel for being of Mercer type is of course tailored
with specific applications in mind. One of these is a theorem, the first main
result of the present paper, which asserts, among other things, that any
$K^\infty$ kernel of Mercer type, along with all its partial and strong
derivatives, is entirely recoverable from the knowledge of at least one
$\mathcal{M}$ factorization for its associated integral operator, by means of
bilinear series formulae universally applicable on arbitrary orthonormal bases
of $L^2$:
\begin{theorem}\label{mfactor}
Let $T\in\mathfrak{R}\left(L^2\right)$ be an integral operator, with a kernel
$\boldsymbol{T}_0$ that is a $K^\infty$ kernel of Mercer type. Then at each
regular value $\lambda\in\Pi(T)$
\par
(a) the Fredholm resolvent $T_\lambda$ of $T$ is also an integral operator
and its kernel, the resolvent kernel $\boldsymbol{T}_\lambda$ for
$\boldsymbol{T}_0$, is also a $K^\infty$ kernel of Mercer type, and, moreover,
\par
(b) for any $\mathcal{M}$ factorization $T = WV^*$ for $T$ and for any
orthonormal basis $\{u_n\}$ for $L^2$, the following formulae hold
\begin{gather}
\left(D_2^jD_1^i\boldsymbol{T}_\lambda\right)(s,t)=
\sum_n\left[R_\lambda(T)Wu_n\right]^{(i)}(s)
\overline{\left[Vu_n\right]^{(j)}(t)}, \label{meijT1}
\\
\begin{split} \label{meijkt2}
\boldsymbol{t}_\lambda^{(i)}(s)&=\sum_n\overline{\left[R_\lambda(T)Wu_n\right]^{(i)}(s)}Vu_n,
\\
\left(\boldsymbol{t}^{\boldsymbol{\prime}}_\lambda\right)^{(j)}(t)&=
\sum_n \overline{\left[Vu_n\right]^{(j)}(t)}R_\lambda(T)Wu_n,
\end{split}
\\
\left[T_\lambda f\right]^{(i)}(s)=
\sum_n\left\langle f,Vu_n\right\rangle_{L^2}\left[R_\lambda(T)Wu_n\right]^{(i)}(s)\label{meijkTf3},
\end{gather}
for all $i$, $j$, all $s$, $t\in\mathbb{R}$, and all $f\in L^2$, where the
series  of \eqref{meijT1} converges $\mathbb{C}$-absolutely in
$C\left(\mathbb{R}^2,\mathbb{C}\right)$, the two series of \eqref{meijkt2},
in which $\boldsymbol{t}_\lambda$, $\boldsymbol{t}^{\boldsymbol{\prime}}_\lambda$
denote the associated Carleman functions of $\boldsymbol{T}_\lambda$, both
converge in $C\left(\mathbb{R},L^2\right)$, and the series of \eqref{meijkTf3}
converges $\mathbb{C}$-absolutely in $C(\mathbb{R},\mathbb{C})$.
\end{theorem}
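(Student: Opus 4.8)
The plan is to read off (a) directly from the Mercer-type hypothesis together with Remark~\ref{remmt}, and to reduce (b), by way of the given factorization, to two \emph{positive} operators for which a Mercer--Kadota-type expansion is available. For part (a): by \eqref{eqresf} the Fredholm resolvent obeys $T_{\lambda}=TR_{\lambda}(T)=R_{\lambda}(T)T\in\mathcal{M}(T)$. Since the inducing kernel $\boldsymbol{T}_{0}$ is of Mercer type (Definition~\ref{MerKernel}), every operator in $\mathcal{M}(T)$ is integral with a $K^{\infty}$ kernel; hence so is $T_{\lambda}$, and its kernel is precisely the resolvent kernel $\boldsymbol{T}_{\lambda}$ of Remark~\ref{rem1}. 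Moreover $T_{\lambda}\in\mathcal{M}(T)$ yields $\mathcal{M}(T_{\lambda})\subseteq\mathcal{M}(T)$ by Remark~\ref{remmt}, so every operator in $\mathcal{M}(T_{\lambda})$ is again integral with a $K^{\infty}$ kernel; thus $\boldsymbol{T}_{\lambda}$ is itself of Mercer type, completing (a).

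For the setup of (b), I would write the given $\mathcal{M}$ factorization as $T=WV^{*}$ and put $W_{\lambda}=R_{\lambda}(T)W$, so that $T_{\lambda}=R_{\lambda}(T)WV^{*}=W_{\lambda}V^{*}$ and $T_{\lambda}^{*}=VW_{\lambda}^{*}$. If $V$ and $W_{\lambda}$ are Carleman operators with Carleman functions $\boldsymbol{v}$ and $\boldsymbol{w}_{\lambda}$, i.e. $(Vf)(t)=\langle f,\boldsymbol{v}(t)\rangle_{L^{2}}$ and $(W_{\lambda}f)(s)=\langle f,\boldsymbol{w}_{\lambda}(s)\rangle_{L^{2}}$, then reading off $(T_{\lambda}f)(s)$ and $(T_{\lambda}^{*}g)(t)$ through these two factorizations gives the compact identities $\boldsymbol{t}_{\lambda}(s)=V\boldsymbol{w}_{\lambda}(s)$, $\boldsymbol{t}_{\lambda}^{\boldsymbol{\prime}}(t)=W_{\lambda}\boldsymbol{v}(t)$, and $\boldsymbol{T}_{\lambda}(s,t)=\langle\boldsymbol{v}(t),\boldsymbol{w}_{\lambda}(s)\rangle_{L^{2}}$. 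Using $[W_{\lambda}u_{n}]^{(i)}(s)=\langle u_{n},\boldsymbol{w}_{\lambda}^{(i)}(s)\rangle_{L^{2}}$ and $[Vu_{n}]^{(j)}(t)=\langle u_{n},\boldsymbol{v}^{(j)}(t)\rangle_{L^{2}}$, each of \eqref{meijT1}--\eqref{meijkTf3} then becomes a purely formal Parseval computation in the basis $\{u_{n}\}$; for instance the $N$-th partial sum of \eqref{meijkt2} equals $V\sum_{n\le N}\langle\boldsymbol{w}_{\lambda}^{(i)}(s),u_{n}\rangle_{L^{2}}u_{n}$, i.e. $V$ applied to the basis expansion of $\boldsymbol{w}_{\lambda}^{(i)}(s)$.

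What makes this formal scheme legitimate is the \emph{key lemma}, which I expect to be the main obstacle: that $V$ and $W$ really are Carleman operators whose Carleman functions are as smooth as $K^{\infty}$ requires. The provisos \eqref{f2} put $VV^{*},WW^{*}\in\mathcal{M}^{+}(T)\subseteq\mathcal{M}(T)$, so, $\boldsymbol{T}_{0}$ being of Mercer type, each of $VV^{*}$, $WW^{*}$ is a positive integral operator with a $K^{\infty}$ kernel. The lemma to isolate is: if $AA^{*}$ is a positive integral operator with $K^{\infty}$ kernel $\boldsymbol{Q}$, then $A$ is Carleman and its Carleman function $\boldsymbol{a}$ has all strong derivatives in $C(\mathbb{R},L^{2})$ with $\|\boldsymbol{a}^{(i)}(s)\|_{L^{2}}^{2}=(D_{2}^{i}D_{1}^{i}\boldsymbol{Q})(s,s)$. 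The route is the polar decomposition $A=Q^{1/2}U$ with $Q=AA^{*}$ and $U$ a partial isometry: by Proposition~\ref{rimlt} it suffices to produce a smooth Carleman function for the positive root $Q^{1/2}$, and this is exactly where the Mercer--Kadota expansion of the positive $K^{\infty}$ kernel $\boldsymbol{Q}$ enters, furnishing $\boldsymbol{q}_{1/2}(s)=\sum_{k}\sqrt{\mu_{k}}\,\overline{\varphi_{k}(s)}\varphi_{k}$ with $\|\boldsymbol{q}_{1/2}^{(i)}(s)\|_{L^{2}}^{2}=(D_{2}^{i}D_{1}^{i}\boldsymbol{Q})(s,s)$ continuous and vanishing at infinity; this positive-kernel expansion on the non-compact line, with its vanishing-at-infinity control of all diagonal derivatives, is the crux of the whole argument. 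Applying the lemma to $V$ and $W$ delivers smooth $\boldsymbol{v}$, $\boldsymbol{w}$, and the remaining function $\boldsymbol{w}_{\lambda}$ is handled by $R_{\lambda}(T)=I_{L^{2}}+\lambda T_{\lambda}$, which gives $W_{\lambda}=W+\lambda T_{\lambda}W$ and hence $\boldsymbol{w}_{\lambda}(s)=\boldsymbol{w}(s)+\overline{\lambda}\,W^{*}\boldsymbol{t}_{\lambda}(s)$, smooth of every order because $\boldsymbol{t}_{\lambda}$ is ($T_{\lambda}$ being $K^{\infty}$ by (a)) and $W^{*}$ is bounded.

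With $\boldsymbol{v}^{(j)},\boldsymbol{w}_{\lambda}^{(i)}\in C(\mathbb{R},L^{2})$ in hand, the diagonal Parseval identities $\sum_{n}|[W_{\lambda}u_{n}]^{(i)}(s)|^{2}=\|\boldsymbol{w}_{\lambda}^{(i)}(s)\|_{L^{2}}^{2}$ and $\sum_{n}|[Vu_{n}]^{(j)}(t)|^{2}=\|\boldsymbol{v}^{(j)}(t)\|_{L^{2}}^{2}$ hold, with bounded, continuous, vanishing-at-infinity right-hand sides. The $\mathbb{C}$-absolute convergence of \eqref{meijT1} in $C(\mathbb{R}^{2},\mathbb{C})$ then follows from Cauchy--Schwarz, the majorant being $\|\boldsymbol{w}_{\lambda}^{(i)}(s)\|_{L^{2}}\|\boldsymbol{v}^{(j)}(t)\|_{L^{2}}$; likewise \eqref{meijkTf3} converges $\mathbb{C}$-absolutely in $C(\mathbb{R},\mathbb{C})$ after pairing the $\ell^{2}$ sequence $\{\langle f,Vu_{n}\rangle_{L^{2}}\}$, of square-sum $\|V^{*}f\|_{L^{2}}^{2}$, against $\{[W_{\lambda}u_{n}]^{(i)}(s)\}$. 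The two series of \eqref{meijkt2} converge in $C(\mathbb{R},L^{2})$ because their partial sums are $V$ and $W_{\lambda}$ applied to the basis expansions of $\boldsymbol{w}_{\lambda}^{(i)}(s)$ and $\boldsymbol{v}^{(j)}(t)$, which converge uniformly in the parameter (continuity plus vanishing at infinity makes the ranges relatively compact), and bounded operators preserve uniform convergence. Finally, differentiation may be carried out term by term since every strong derivative again lies in $C(\mathbb{R},L^{2})$, so all differentiated series converge uniformly; this legitimizes passing $D_{1}^{i}$, $D_{2}^{j}$, and $(\cdot)^{(i)}$ inside the sums and completes (b).
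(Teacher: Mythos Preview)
Your treatment of part (a) is fine and matches the paper (the paper in fact gets the equality $\mathcal{M}(T_\lambda)=\mathcal{M}(T)$ from invertibility of $R_\lambda(T)$, but your inclusion suffices).

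The gap is in part (b), precisely at the point you flag as the crux. Your key lemma asserts that if $AA^{*}$ is a positive integral operator with $K^{\infty}$ kernel $\boldsymbol{Q}$, then $A$ is Carleman with Carleman function having all strong derivatives in $C(\mathbb{R},L^{2})$; and you propose to prove it via a Mercer--Kadota eigenfunction expansion $\boldsymbol{q}_{1/2}(s)=\sum_{k}\sqrt{\mu_{k}}\,\overline{\varphi_{k}(s)}\varphi_{k}$ of $Q^{1/2}$. But $Q=VV^{*}$ (or $WW^{*}$) is merely a bounded positive operator in $\mathcal{M}^{+}(T)$: nothing in the hypotheses forces it to be compact, and without compactness there is no discrete spectral resolution $\{\mu_{k},\varphi_{k}\}$ to write down, let alone a Mercer--Kadota theorem to invoke. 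The classical Mercer and Kadota theorems you cite live on compact intervals with compact (Hilbert--Schmidt) operators; here neither the domain nor the operator is compact, so the eigenfunction route is unavailable. Consequently you have no argument that $Q^{1/2}$ is Carleman, and the whole chain $A=Q^{1/2}U$, $\boldsymbol{a}=U^{*}\boldsymbol{q}_{1/2}$ collapses.

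The paper obtains the same endpoint (that $[Vu_n]^{(i)},[Wu_n]^{(i)}\in C(\mathbb{R},\mathbb{C})$ and $\sum_n|[Vu_n]^{(i)}(s)|^{2}$, $\sum_n|[Wu_n]^{(i)}(s)|^{2}$ converge in $C(\mathbb{R},\mathbb{C})$) without ever diagonalizing $VV^{*}$. It works directly with the $K^{\infty}$ kernel $\boldsymbol{F}$ of $F=VV^{*}$ and proves, by induction on $\ell$, an identity between two rectangle-integrals,
\[
\iint_{Q}\Bigl((D_{1,2}^{\ell}\boldsymbol{F})(s,t)-\sum_{n\le m}[Vu_n]^{(\ell)}(s)\overline{[Vu_n]^{(\ell)}(t)}\Bigr)\,ds\,dt
=\sum_{n>m}\int_a^b[Vu_n]^{(\ell)}\int_c^d\overline{[Vu_n]^{(\ell)}},
\]
from which the diagonal positivity $(D_{1,2}^{\ell}\boldsymbol{F})(s,s)\ge\sum_{n\le m}|[Vu_n]^{(\ell)}(s)|^{2}$ and then, via Dini on the one-point compactification, the uniform convergence $\sum_n|[Vu_n]^{(\ell)}(s)|^{2}=(D_{1,2}^{\ell}\boldsymbol{F})(s,s)$ follow. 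The induction step is an integration of $D_{1,2}^{\ell+1}\boldsymbol{F}$ over rectangles back to corner values of $D_{1,2}^{\ell}\boldsymbol{F}$. This argument is the genuine substitute for Mercer--Kadota in the non-compact, non-diagonalizable setting, and it is what your sketch is missing. Once it is in place, your downstream convergence arguments (Cauchy--Schwarz for \eqref{meijT1} and \eqref{meijkTf3}, relative compactness of the range of $\boldsymbol{v}^{(j)}$, $\boldsymbol{w}_\lambda^{(i)}$ for \eqref{meijkt2}) are sound and close to the paper's.
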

For the case $\lambda=0$, the results of the theorem have been announced
without proofs in \cite[Theorem~4]{nov:IJPAM1}. In this particular case,
$R_\lambda(T)=I_{L^2}$, $\boldsymbol{T}_\lambda=\boldsymbol{T}_0$, and the
bilinear formula \eqref{meijT1} reminds one of Mercer's (see
\cite[Theorem 4.24]{Porter}) and Kadota's (see \cite{Kadota1}) Theorems; the first
theorem, recall, is about absoluteness and uniformity of convergence of
bilinear (orthogonal) eigenfunction expansions for continuous compactly
supported kernels of positive integral operators, and the second is about
term-by-term differentiability of those expansions while retaining the
absolute and the uniform convergence. The similarity is closest when
Theorem~\ref{mfactor} is applied to a diagonal operator $T$ by taking as
$T=WV^*$ the $\mathcal{M}$ factorization of Example~\ref{TWV} and as $\{u_n\}$
any orthonormal basis with respect to which $T$ is diagonalizable, because
then formula \eqref{meijT1} (with $\lambda=0$) reduces, after a computation,
to a bilinear eigenfunction expansion,
\begin{equation*}
\left(D_2^jD_1^i\boldsymbol{T}_0\right)(s,t)=
\sum_n\lambda_n\left[u_n\right]^{(i)}(s)
\overline{\left[u_n\right]^{(j)}(t)},
\end{equation*}
converging $\mathbb{C}$-absolutely in $C\left(\mathbb{R}^2,\mathbb{C}\right)$.
Applied within the same setting, formula \eqref{meijkTf3} looks like:
$
\left[Tf\right]^{(i)}(s)=
\sum_n\lambda_n \left\langle f,u_n\right\rangle_{L^2}\left[u_n\right]^{(i)}(s)
$
in the sense of $\mathbb{C}$-absolute convergence in
$C\left(\mathbb{R},\mathbb{C}\right)$, and gives something very akin to
Schmidt's Theorem, see \cite[Theorem 4.22]{Porter}.
In the general non-diagonalizable case, it is therefore natural to view
formulae \eqref{meijT1}, \eqref{meijkTf3}
as a ``necessary substitute'' for the above classical diagonal
ones, justifying the name chosen for the kernels defined in
Definitions~\ref{Kmkernel} and \ref{MerKernel}.
\par
Formulae \eqref{meijT1}, \eqref{meijkt2} do not, of course, solve the problem
(mentioned at the end of Remark~\ref{rem1}) of explicitly constructing the
resolvent kernel $\boldsymbol{T}_\lambda$ for $\boldsymbol{T}_0$, but succeed
in reducing it to one of explicitly finding at most countably many functions
$R_\lambda(T)Wu_n$, $Vu_n$ ($n\in\mathbb{N}$), which we do not yet know how to 
resolve. Nevertheless, the formulae in Theorem~\ref{mfactor} may (hopefully)
prove quite interesting from a theoretical perspective in terms of developing
a general scheme for bilinearly representing integral kernels
via the use of the operators that they define but ignoring, if needed, the explicit
knowledge about the spectra of those operators.
\par
The proof of Theorem~\ref{mfactor} is given in Section~\ref{prmfactor} below
and actually proves a somewhat looser version of it, which will be formulated
later, in Section~\ref{faccol}, as Corollary~\ref{mfactor3}. This corollary is
of the same sort as Theorem~\ref{mfactor}, but, among others, we have relaxed
the $\mathcal{M}$ factoring assumption \eqref{f2} about the operators $W$, $V$
being used in formulae \eqref{meijT1}-\eqref{meijkTf3}. In Section~\ref{faccol}, we
also try to give a unified view of various bilinear expansion theorems which
involve the use of canonical forms of operators, by deriving them as
consequences of (the proof of) Theorem~\ref{mfactor}.
\par
At first glance it may seem that the conditions defining $K^\infty$ kernel of
Mercer type are not only hardly verifiable, but also very artifical and rather
contrived, and also that for most applications it is too restrictive to confine
oneself to such kernels. It is somewhat surprising, therefore, to discover
the fact that any bi-integral operator can be made to have as its kernel
a $K^\infty$ kernel of Mercer type. For the interpretation of the corresponding
result, it is helpful to recall the notion of a unitary equivalence. A bounded
linear operator $U\colon {\mathcal{H}}\to L^2$ is said to be \textit{unitary} if
$\mathrm{Ran\,}U=L^2$ and
$\langle Uf,Ug\rangle_{L^2}=\langle f,g\rangle_{\mathcal{H}}$ for all $f$,
$g\in {\mathcal{H}}$. An operator $S\in \mathfrak{R}({\mathcal{H}})$ is said
to be \textit{unitarily equivalent} to an operator
$T\in \mathfrak{R}\left(L^2\right)$ if a unitary operator
$U\colon {\mathcal{H}}\to L^2$ exists such that $T=USU^{-1}$. It is also relevant to
mention the fact that a necessary and sufficient condition that an operator
$S\in\mathfrak{R}({\mathcal{H}})$ be unitarily equivalent to a (general)
bi-Carleman integral operator is that there exist an orthonormal sequence
$\left\{e_n\right\}$ in $\mathcal{H}$ such that
\begin{equation}\label{kh}
\left\|Se_n\right\|_{\mathcal{H}}\rightarrow 0,\quad
\left\|S^*e_n\right\|_{\mathcal{H}}\rightarrow 0\quad
\text{as $n\rightarrow\infty$}
\end{equation}
(or, equivalently, that $0$ belong to the essential spectrum of $SS^*+S^*S$).
This fact was first stated by von Neumann in \cite{Neu} for self-adjoint
operators and was then extended by Korotkov to the general case (see
\cite[Theorem~III.2.7]{Kor:book1}, \cite[Theorem~15.14]{Halmos:Sun}). Recall
\cite[Theorem~15.11]{Halmos:Sun} that the class of operators satisfying
\eqref{kh} includes all bi-integral operators when the Hilbert space
$\mathcal{H}$ is $L^2$, or in general $L^2(Y,\mu)$ associated with a positive,
$\sigma$-finite, separable, and not purely atomic, measure $\mu$. The
bi-integral operators, on the other hand, are generally involved in
second-kind integral equations (like \eqref{skequ}) in $L^2(Y,\mu)$, as the 
adjoint equations to such equations are customarily required to be integral.
\par
It is pleasant to know that the same condition as \eqref{kh} proves necessary
and sufficient for the operator $S$ to be unitarily equivalent to a bi-Carleman
operator generated by a $K^\infty$ kernel of Mercer type.
The second principal result of the present paper, Theorem~\ref{infsmooth}
below, both states this fact and characterizes families incorporating those operators
in $\mathfrak{R}({\mathcal{H}})$ that can be simultaneously transformed by the
same unitary equivalence transformation into bi-Carleman integral operators
having as kernels $K^\infty$ kernels of Mercer type.
\begin{theorem}\label{infsmooth}
Suppose that for an operator family $\mathcal{S}=
\left\{S_\gamma\right\}_{\gamma\in\mathcal{G}}\subset\mathfrak{R}({\mathcal{H}})$
with an index set of arbitrary cardinality there exists an orthonormal
sequence $\left\{e_n\right\}$ in $\mathcal{H}$ such that
\begin{equation}\label{1.2}
\lim\limits_{n\to\infty}\sup\limits_{\gamma\in\mathcal{G}}\left\|S_\gamma e_n\right\|_{\mathcal{H}}=0,
\quad
\lim_{n\to\infty}\sup_{\gamma\in\mathcal{G}}\left\|\left(S_\gamma\right)^* e_n\right\|_{\mathcal{H}}=0.
\end{equation}
Then there exists a unitary operator $U\colon \mathcal{H}\to L^2$ such that all the
operators $T_\gamma=U S_\gamma U^{-1}$ $(\gamma\in\mathcal{G})$ and their
linear combinations are bi-Carleman operators on $L^2$, whose kernels are
$K^\infty$ kernels of Mercer type.
\end{theorem}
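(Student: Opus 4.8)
\noindent\emph{Sketch of the intended argument.}\ The plan is to reduce the whole statement to a single ``transplant lemma'': I would construct one unitary $U\colon\mathcal H\to L^2$, depending only on the sequence $\{e_n\}$ and on the common decay rate $\varepsilon_n:=\sup_{\gamma}\bigl(\|S_\gamma e_n\|_{\mathcal H}+\|(S_\gamma)^*e_n\|_{\mathcal H}\bigr)$ furnished by \eqref{1.2}, with the property that \emph{every} operator $S\in\mathfrak R(\mathcal H)$ satisfying $\|Se_n\|_{\mathcal H}\to0$ and $\|S^*e_n\|_{\mathcal H}\to0$ is carried by $U$ to a bi-Carleman operator $USU^{-1}$ whose kernel is a $K^\infty$ kernel of Mercer type. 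Granting such a lemma, the theorem follows: each $S_\gamma$ qualifies, with the suprema in \eqref{1.2} guaranteeing a \emph{single} calibration of $U$ valid for the whole family; and any finite linear combination $\sum_k c_kS_{\gamma_k}$ again satisfies the smallness condition along $\{e_n\}$ (with constant $\sum_k|c_k|$ against $\varepsilon_n$), so its transplant $\sum_k c_kT_{\gamma_k}$ is covered by the same lemma. The concrete realization of $U$ sends a fixed orthonormal basis $\{g_k\}$ of $\mathcal H$, obtained by extending $\{e_n\}$, onto a carefully engineered infinitely smooth orthonormal basis $\{\varphi_k\}$ of $L^2(\mathbb R)$, so that the kernel of $X=UYU^{-1}$ is read off from the matrix entries as the bilinear expansion $\boldsymbol X(s,t)=\sum_{k,l}\langle Yg_l,g_k\rangle\,\varphi_k(s)\overline{\varphi_l(t)}$, an expression depending \emph{linearly} on $Y$.

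The design of $\{\varphi_k\}$ is the engine of the construction. I would take the $\varphi_k$ to be smooth bumps that simultaneously spread out and flatten as $k\to\infty$ (widths increasing, heights decreasing in calibration against $\varepsilon_n$), arranged so that for every order $i$ the sup-norms $\|\varphi_k^{(i)}\|_\infty$ are controlled and the tails $\sum_{k\ge N}|\varphi_k^{(i)}(s)|^2$ vanish uniformly for large $|s|$. Packaging the data through an $\mathcal M$ factorization $T_\gamma=W_\gamma V_\gamma^*$ in the sense of Definition~\ref{mfac}, for instance the polar one of Example~\ref{TWV}, I would arrange that the factors carry $\{u_n\}$ into uniformly smooth, square-summable families $\{[W_\gamma u_n]\}$, $\{[V_\gamma u_n]\}$. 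Conditions (i)--(iii) of Definition~\ref{Kmkernel} for $\boldsymbol T_\gamma$ then reduce to convergence, in the norms of $C(\mathbb R^2,\mathbb C)$ and $C(\mathbb R,L^2)$, of the single bilinear series $\sum_n[W_\gamma u_n]^{(i)}(s)\overline{[V_\gamma u_n]^{(j)}(t)}$ and its one-variable analogues, exactly as in formulae \eqref{meijT1}--\eqref{meijkTf3} of Theorem~\ref{mfactor} at $\lambda=0$; each is obtained by Cauchy--Schwarz from the two square-summability estimates, the decay $\varepsilon_n\to0$ supplying the uniform tail control that makes the limit functions vanish at infinity.

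The Mercer-type clause is the substantive part. Given $A\in\mathcal M(T_\gamma)$, the relations \eqref{relations} force $A$ to be at once a right multiple and a left multiple, say $A=T_\gamma M=NT_\gamma$ (the three cases involving $T_\gamma^*$ being identical). Conditions (ii) and (iii) for $A$ come for free from the Right-Multiplication Lemma (Proposition~\ref{rimlt}): applied to $A=T_\gamma M$ it gives the Carleman function $M^*(\boldsymbol t_\gamma(\cdot))$, and applied to $A^*=T_\gamma^*N^*$ it gives $N(\boldsymbol t^{\boldsymbol\prime}_\gamma(\cdot))$, both of which inherit membership in $C(\mathbb R,L^2)$ together with all strong derivatives because $\boldsymbol t_\gamma$, $\boldsymbol t^{\boldsymbol\prime}_\gamma$ are $K^\infty$ and $M^*,N$ are bounded. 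Condition (i), the joint continuity and decay of $\boldsymbol A$ with all its mixed partials, is the genuinely new point, and this is precisely where the \emph{double} multiple structure is indispensable: the left factor inherited from $A=NT_\gamma$ smooths in the variable $s$, the right factor inherited from $A=T_\gamma M$ smooths in $t$, and the engineered basis is what lets me fuse these two one-sided smoothings into a single joint estimate via a bilinear series for $\boldsymbol A$. Here I would again pass to an $\mathcal M$ factorization of $A$, which is legitimate since $\mathcal M(A)\subseteq\mathcal M(T_\gamma)$ by Remark~\ref{remmt}, so $A$ lives inside the same smooth reservoir; this inclusion also makes the Mercer-type property self-propagating, and closure under finite linear combinations is automatic because the combination satisfies smallness and hence is itself one of the operators governed by the transplant lemma.

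I expect the main obstacle to be the tension among three incompatible-looking demands on $\{\varphi_k\}$. The images must constitute an orthonormal \emph{basis} of $L^2(\mathbb R)$, so their supports cannot be disjoint and $\sum_k|\varphi_k(s)|^2$ cannot be bounded; yet the contributions of the complement indices of $\{g_k\}$, where \eqref{1.2} supplies \emph{no} decay, must remain summable; and all the resulting estimates must survive, uniformly in $\gamma$, the passage to the entire family $\mathcal M(T_\gamma)$ demanded by the Mercer-type clause. Reconciling these is exactly what dictates the spreading-and-flattening profile: on the complement indices the vanishing at infinity of the kernel must be manufactured from the smallness of the function values $\|\varphi_k\|_\infty$ rather than from any smallness of $\|S_\gamma g_k\|_{\mathcal H}$, while the overlap forced by completeness must be kept mild enough that every derivative series still converges. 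Producing a single template meeting all these constraints at once, and verifying that the fusion of one-sided smoothings into the joint estimate of condition~(i) is stable under the full algebra of left and right multiples in $\mathcal M(T_\gamma)$, is the technical crux on which the proof stands or falls.
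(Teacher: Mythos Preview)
Your high-level architecture is right and matches the paper's: build $U$ by sending a basis of $\mathcal H$ obtained from $\{e_n\}$ onto an engineered smooth basis of $L^2$, prove a ``transplant lemma'' for any single $S$ with the decay along $\{e_n\}$, and reduce the Mercer clause and the linear-combination clause to that lemma. You also correctly isolate the central tension (completeness of the target basis versus the need for summable sup-norms on the complement indices). But the concrete mechanism you propose for verifying that $\boldsymbol T_\gamma$ is a $K^\infty$ kernel has a genuine bootstrap problem, and the missing device is precisely what the paper supplies.

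Your plan is to take the polar $\mathcal M$ factorization $T_\gamma=W_\gamma V_\gamma^*$ and ``arrange that the factors carry $\{u_n\}$ into uniformly smooth, square-summable families $\{[W_\gamma u_n]\},\{[V_\gamma u_n]\}$'', then invoke the bilinear-series machinery of Theorem~\ref{mfactor} at $\lambda=0$. But $V_\gamma=|T_\gamma|^{1/2}$ is a nonlinear functional-calculus object; there is no direct way to control $[\,|T_\gamma|^{1/2}u_n]^{(i)}$ from basis estimates alone. In the paper's own logic (Steps~2--3 of Section~\ref{prmfactor}), the square-summability of $\{[V u_n]^{(i)}\}$ is \emph{derived from} the prior knowledge that $VV^*$ has a $K^\infty$ kernel---which is exactly part of what you are trying to establish. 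The same circularity infects your Mercer argument: passing to an $\mathcal M$ factorization of $A\in\mathcal M(T_\gamma)$ and quoting $\mathcal M(A)\subseteq\mathcal M(T_\gamma)$ presupposes that operators in $\mathcal M^{+}(T_\gamma)$ already have $K^\infty$ kernels. Relatedly, the double series $\sum_{k,l}\langle Yg_l,g_k\rangle\,\varphi_k(s)\overline{\varphi_l(t)}$ cannot be made to converge absolutely in $C(\mathbb R^2,\mathbb C)$ by any choice of orthonormal $\{\varphi_k\}$, since $\sum_k|\varphi_k(s)|^2=\infty$ for a complete system; you acknowledge this, but your ``spreading-and-flattening'' prescription does not by itself resolve it.

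The paper sidesteps both issues by an \emph{operator} splitting that you do not have. With $E$ the projection onto $\overline{\mathrm{Span}}\{x_k\}$ (the $x_k$ being a subsequence of $\{e_n\}$), it writes $S=(I-E)S+(S^*E)^*$. The first summand lands in $\mathrm{Span}\{y_k\}$, whose images $h_k=Uy_k$ are chosen with $\sum_k\|[h_k]^{(i)}\|_\infty<\infty$, so its kernel is a \emph{single} sum $\sum_k[h_k](s)\overline{[T^*h_k](t)}$; the second summand is Hilbert--Schmidt (from $\sum_k d(x_k)<\infty$) and is handled through its Schmidt decomposition and an auxiliary fourth-root trick. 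No $\mathcal M$ factorization, no square-root of $T_\gamma$, and hence no circularity---the paper even remarks that this verification proceeds ``without any use of the method of Section~\ref{prmfactor}''. The Mercer clause is then immediate and non-circular: any $A\in\mathcal M(S)$ satisfies, after a harmless rescaling, the \emph{same} decay inequalities along $\{x_k\}$ as $S$ does, so the identical splitting argument applies to $A$ verbatim. If you replace your factorization route by this projection-plus-Hilbert--Schmidt splitting, the rest of your outline (single $U$, uniform calibration via $\sup_\gamma$, closure under linear combinations) goes through.
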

This result has recently been published without proof in \cite[Theorem~3]{nov:IJPAM1}.
Section~\ref{intrepr} of the present paper is entirely devoted to proving
Theorem~\ref{infsmooth}. The method of proof yields a technique for
constructing that unitary operator $U\colon \mathcal{H}\to L^2$ whose existence the
theorem asserts. The technique uses no spectral properties of the operators
$S_\gamma$, other than their joint property imposed in \eqref{1.2}, to
determine the action of $U$ by specifying two orthonormal bases, of
$\mathcal{H}$ and of $L^2$, one of which is meant to be the image by $U$ of
the other, the basis for $L^2$ may be chosen to be an infinitely smooth
wavelet basis.
\section{Proof of Theorem~\ref{mfactor}}\label{prmfactor}
(a) Use \eqref{MT}, \eqref{eqresf}, and the invertibility of
$R_\lambda(T)$, to see that

\newpage

\begin{multline*}
\mathcal{M}(T_\lambda)=\left(TR_\lambda(T)\mathfrak{R}\left(L^2\right)
\cup T^*\left(R_\lambda(T)\right)^*\mathfrak{R}\left(L^2\right)\right)
\\
\cap \left(\mathfrak{R}\left(L^2\right)R_\lambda(T)T\cup
\mathfrak{R}\left(L^2\right)\left(R_\lambda(T)\right)^*T^*\right)=
\mathcal{M}(T),
\end{multline*}
for each $\lambda\in\Pi(T)$. Now the first assertion in the theorem is
immediate from Definition~\ref{MerKernel}.
\par
(b) This part of the proof proves the second assertion in the theorem,
and is divided into four steps. The first three steps are to establish formulae
\eqref{meijT1}-\eqref{meijkTf3} for the case in which $\lambda=0$. Step~4 takes
care of the case of an arbitrary regular $\lambda$.
\par
Throughout what follows let $\left\{u_n\right\}$ be an arbitrary but fixed
orthonormal basis for $L^2$, and let $W$, $V$ be arbitrary but likewise fixed
operators of $\mathfrak{R}\left(L^2\right)$ such that both $F=VV^*$ and $G=WW^*$
are in $\mathcal{M}^{+}(T)$, and $T=WV^*$. We then let $\boldsymbol{F}$,
$\boldsymbol{G}$ denote the $K^\infty$ kernels of the integral positive
operators $F$, $G$, respectively.
\par
\textsl{Step 1.} A convenient way to begin the proof of assertion (b)
for $\lambda=0$ is to assume for the moment that the following properties of
the function systems $\{Vu_n\}$, $\{Wu_n\}$ hold true:
\par
(A) $\left[Vu_n\right]^{(i)}$,
$\left[Wu_n\right]^{(i)}\in C(\mathbb{R},\mathbb{C})$, for all
$n\in\mathbb{N}$ and all $i$,
\par
(B) the series $\sum_n\left|\left[Vu_n\right]^{(i)}\right|^2$,
$\sum_n\left|\left[Wu_n\right]^{(i)}\right|^2$ converge in $C(\mathbb{R},\mathbb{C})$, for all $i$.
\par
Having made these assumptions, the first thing to do is to establish the
existence of a $K^\infty$ kernel $\boldsymbol{H}$, with associated Carleman
functions $\boldsymbol{h}$ and $\boldsymbol{h}^{\boldsymbol{\prime}}$, such
that, for all $i$, $j$,
\begin{equation}\label{firsts}
\left(D_2^jD_1^i\boldsymbol{H}\right)(s,t)=
\sum_n\left[Wu_n\right]^{(i)}(s)\overline{\left[Vu_n\right]^{(j)}(t)}
\end{equation}
with the series converging $\mathbb{C}$-absolutely in
$C\left(\mathbb{R}^2,\mathbb{C}\right)$, and
\begin{equation}\label{seconds}
\boldsymbol{h}^{(i)}(s)=\sum_n\overline{\left[Wu_n\right]^{(i)}(s)}Vu_n,
\quad\left(\boldsymbol{h}^{\boldsymbol{\prime}}\right)^{(j)}(t)=\sum_n \overline{\left[Vu_n\right]^{(j)}(t)}Wu_n,
\end{equation}
in the sense of convergence in $C\left(\mathbb{R},L^2\right)$. For this
purpose, invoke the inequalities
\allowdisplaybreaks
\begin{equation*}
\begin{gathered}
\left({\sum_{n=p}^r\left|\left[Wu_n\right]^{(i)}(s)
\overline{\left[Vu_n\right]^{(j)}(t)}\right|}\right)^2
\leq \sum_{n=p}^r\left|\left[Wu_n\right]^{(i)}(s)\right|^2\sum_{n=p}^{r}
\left|\left[Vu_n\right]^{(j)}(t)\right|^2,
\\
\left\|\sum_{n=p}^r\overline{\left[Wu_n\right]^{(i)}(s)}Vu_n\right\|_{L^2}^2\le\|V\|^2
\sum_{n=p}^r\left|\left[Wu_n\right]^{(i)}(s)\right|^2,
\\
\left\|\sum_{n=p}^r \overline{\left[Vu_n\right]^{(j)}(t)}Wu_n\right\|_{L^2}^2\le\|W\|^2
\sum_{n=p}^r\left|\left[Vu_n\right]^{(j)}(t)\right|^2
\end{gathered}
\end{equation*}
to infer, via (A) and (B), that the series of \eqref{firsts} and of \eqref{seconds}
do indeed converge in the senses above. Then apply the corresponding theorems
on termwise differentiation of series to conclude that functions
$\boldsymbol{H}$ of $C\left(\mathbb{R}^2,\mathbb{C}\right)$ and
$\boldsymbol{h}$, $\boldsymbol{h}^{\boldsymbol{\prime}}$ of
$C\left(\mathbb{R},L^2\right)$, defined as
\allowdisplaybreaks
\begin{equation}\label{me3.21}
\begin{gathered}
\boldsymbol{H}(s,t)=\sum_n\left[Wu_n\right](s)\overline{\left[Vu_n\right](t)},
\\
\boldsymbol{h}(s)=\overline{\boldsymbol{H}(s,\cdot)}=\sum_n\overline{
\left[Wu_n\right](s)}Vu_n,
\\
\boldsymbol{h}^{\boldsymbol{\prime}}(t)=\boldsymbol{H}(\cdot,t)=
\sum_n \overline{\left[Vu_n\right](t)}Wu_n,
\end{gathered}
\end{equation}
have the desired expansions \eqref{firsts}, \eqref{seconds} for all $i$, $j$,
and hence that
\begin{equation}\label{meconi}
D_2^jD_1^i\boldsymbol{H}\in C\left(\mathbb{R}^2,\mathbb{C}\right),\quad
\boldsymbol{h}^{(i)},\ \left(\boldsymbol{h}^{\boldsymbol{\prime}}\right)^{(j)}\in
C\left(\mathbb{R},L^2\right)
\end{equation}
for all $i$, $j$. Since, moreover, each mixed partial derivative of
$\left[Wu_n\right](s)\overline{\left[Vu_n\right](t)}$ is everywhere
independent of the sequence in which the partial differentiations with respect
to $s$ and $t$ are carried out, it follows that not only those of the form as
in \eqref{meconi} but also all other partial derivatives of $\boldsymbol{H}$
belong to $C\left(\mathbb{R}^2,\mathbb{C}\right)$, hereby showing
conclusively that $\boldsymbol{H}$ is a $K^\infty$ kernel.
\par
Now fix any $f\in L^2$, and observe then that
\begin{equation}\label{Tf}
Tf=WV^*f=\sum_n\left\langle f,Vu_n\right\rangle_{L^2}Wu_n,
\end{equation}
where the series converges to $Tf$ in $L^2$. On the other hand, the
convergence properties of the series of \eqref{me3.21} make it possible
to write, for each temporarily fixed $s\in\mathbb{R}$, the following chain of
relations
\begin{multline}\label{Kf}
\sum_n\left\langle f,Vu_n\right\rangle_{L^2}\left[Wu_n\right](s)
=\left\langle f,\sum_n\overline{\left[Wu_n\right](s)}Vu_n\right\rangle_{L^2}\\
=
\int_{\mathbb{R}}\left(\sum_n\left[Wu_n\right](s)\overline{\left[Vu_n\right](t)}\right)f(t)\,dt=
\int_{\mathbb{R}}\boldsymbol{H}(s,t)f(t)\,dt.
\end{multline}
Because of the assumptions (A), (B) made about the functions
$\left[Wu_n\right]$, and also because of the inequality
\begin{equation*}
\left(\sum_{n=p}^r\left|\left\langle f,Vu_n\right\rangle_{L^2}\left[Wu_n\right]^{(i)}(s)\right|\right)^2
\leq\sum_{n=p}^r\left|\left\langle V^*f,u_n\right\rangle_{L^2}\right|^2
\sum_{n=p}^r\left|\left[Wu_n\right]^{(i)}(s)\right|^2,
\end{equation*}
the first series of \eqref{Kf} is $\mathbb{C}$-absolutely convergent in
$C(\mathbb{R},\mathbb{C})$ and can be differentiated termwise any number of
times while retaining this type of convergence. Comparison of \eqref{Tf} with
\eqref{Kf} shows then that the series representation \eqref{meijkTf3} holds
with $\lambda=0$ for all $i$, and that
\begin{equation*}
(Tf)(s)=\int_{\mathbb{R}}\boldsymbol{H}(s,t)f(t)\,dt\quad
\text{for almost every $s\in\mathbb{R}$}.
\end{equation*}
Since $f\in L^2$ was arbitrary, the latter equality means that the
operator $T$ (which is equal to the Fredholm resolvent $T_{\lambda}$ at
$\lambda=0$) is an integral operator with the function
$\boldsymbol{H}$ as its kernel, so, by the uniqueness of the kernel,
$\boldsymbol{H}=\boldsymbol{T}_0$ in the
$C\left(\mathbb{R}^2,\mathbb{C}\right)$ sense, and
$\boldsymbol{h}=\boldsymbol{t}_0$,
$\boldsymbol{h}^{\boldsymbol{\prime}}=\boldsymbol{t}^{\boldsymbol{\prime}}_0$
in the $C\left(\mathbb{R},L^2\right)$ sense.
The final conclusion is thus that, when $\lambda=0$, assertion
(b) in the theorem will follow once we show that the functions $Vu_n$,
$Wu_n$ ($n\in\mathbb N$) do in fact enjoy properties (A) and
(B). This is the object of the next two steps.
\par
\textsl{Step 2.} This step consists of proving that, under the assumptions
made about $V$ and $W$ prior to Step~1, the series convergence properties
stated in (B) always hold whenever the function properties stated in
(A) are met, that is, that (A) implies (B). The
proof is further given only for the first series of (B), as the proof
for the second series, $\sum_n\left|\left[Wu_n\right]^{(i)}\right|^2$, is
entirely similar.
\par
If $\ell$ is a non-negative integer, $m$ is a positive integer, and
$Q=[a,b]\times[c,d]$ is a compact rectangle of $\mathbb{R}^2$, we define the
three quantities
\begin{gather}
q_1(Q,m;\ell)=\iint\limits_{Q}
\left(\left(D_{1,2}^\ell\boldsymbol{F}\right)(s,t)-
\sum_{n\le m}\left[Vu_n\right]^{(\ell)}(s)\overline{\left[Vu_n\right]^{(\ell)}(t)}\right)\,ds\,dt,
\label{main1}
\\
q_2(Q,m;\ell)=\sum_{n>m} \int\limits_a^b\left[Vu_n\right]^{(\ell)}(s)\,ds
\int\limits_c^d\overline{\left[Vu_n\right]^{(\ell)}(t)}\,dt,
\label{main2}
\\
q_3(Q,m;\ell)=\iint\limits_{Q}\left(\sum_{n>m} \left[Vu_n\right]^{(\ell)}(s)
\overline{\left[Vu_n\right]^{(\ell)}(t)}\right)\,ds\,dt,
\label{main3}
\end{gather}
and prove that, for each $\ell$,
\begin{equation}\label{q1q2}
q_1(Q,m;\ell)=q_2(Q,m;\ell)\quad \text{for all $Q$ and all $m$.}
\end{equation}
First, for this purpose, utilize the $L^2$ representation
\begin{equation*}
Ff=VV^*f=\sum_n\left\langle f,Vu_n\right\rangle_{L^2}Vu_n\quad(f\in L^2)
\end{equation*}
in order to write, for each $Q$ and $m$,
\begin{multline*}
q_1(Q,m;0)
\\
=\sum_n\left\langle Vu_n,\chi_{[a,b]}\right\rangle_{L^2}
\left\langle\chi_{[c,d]},Vu_n\right\rangle_{L^2}-
\sum_{n\le m} \left\langle Vu_n,\chi_{[a,b]}\right\rangle_{L^2}
\left\langle\chi_{[c,d]},Vu_n\right\rangle_{L^2}
\\
=\sum_{n>m}\left\langle Vu_n,\chi_{[a,b]}\right\rangle_{L^2}
\left\langle\chi_{[c,d]},Vu_n\right\rangle_{L^2}
=q_2(Q,m;0),
\end{multline*}
where $\chi_E$ denotes the characteristic function of a set $E$. This implies
that the identity \eqref{q1q2} holds with $\ell=0$. Proceeding by induction
over $\ell$, suppose identity \eqref{q1q2} to be satisfied for some fixed
$\ell$. The stage is now set for the induction step.
\par
It is first necessary to remark that the integrand in \eqref{main1} must be
non-negative on the main diagonal of $\mathbb{R}^2$, that is, denoting the
integrand by $\boldsymbol{F}_m^{\ell}(s,t)$:
\begin{equation}\label{nonneg}
\boldsymbol{F}_m^{\ell}(s,s)=\left(D_{1,2}^\ell\boldsymbol{F}\right)(s,s)-
\sum_{n\le m}\left|\left[Vu_n\right]^{(\ell)}(s)\right|^2
\ge0\quad\text{for all $s\in\mathbb{R}$.}
\end{equation}
Indeed, if it were not so, there would exist a square $Q^\prime$, with centre
at some point on the main diagonal of $\mathbb{R}^2$ and sides parallel to the
coordinate axes, such that its corresponding quantity $q_1(Q^\prime,m;\ell)$
would be negative, contradicting the non-negativity of $q_2(Q^\prime,m;\ell)$
which in turn would be implied by coincidence of the integration intervals on
the right side of \eqref{main2}. In more detail, if
$\boldsymbol{F}_m^{\ell}(s_0,s_0)=-2\delta$ for some $s_0$ and $\delta>0$, then
by the continuity of $\boldsymbol{F}_m^{\ell}$ on $\mathbb{R}^2$, which follows
from that of $D_{1,2}^\ell\boldsymbol{F}$ and from property (A),
there is a square $Q^\prime$,
\begin{equation*}
Q^\prime=\left\{(s,t)\in\mathbb{R}^2: |s-s_0|\le\varepsilon/2,\,|t-s_0|\le
\varepsilon/2\right\}
\quad(\varepsilon>0)
\end{equation*}
in which $\mathrm{Re}\,\boldsymbol{F}_m^{\ell}(s,t)<-\delta$.
By the induction hypothesis
\begin{equation}\label{Q1Q2}
\iint\limits_{Q^\prime}\boldsymbol{F}_m^{\ell}(s,t)\,ds\,dt=
q_1(Q^\prime,m;\ell)=q_2(Q^\prime,m;\ell)\geq0
\end{equation}
so
$0\leq\iint\limits_{Q^\prime}\mathrm{Re}\,\boldsymbol{F}_m^{\ell}(s,t)\,ds\,dt
<-\delta\varepsilon^2<0,$
a contradiction. (Note: that the diagonal value, $\boldsymbol{F}_m^{\ell}(s,s)$, of
the integrand in \eqref{main1} at any point $s\in\mathbb{R}$ and the value of
the integral in \eqref{Q1Q2} taken over any square like
$Q^\prime=[a,b]\times[a,b]$ both have, for any $\ell$ and $m$, a zero
imaginary part comes from the Hermiticity of
$\boldsymbol{F}_m^{\ell}$, $\boldsymbol{F}_m^{\ell}(s,t)=\overline{\boldsymbol{F}_m^{\ell}(t,s)}$
for all $s$, $t\in\mathbb{R}$.
This latter property is mainly inherited from that of $\boldsymbol{F}$
($\boldsymbol{F}(s,t)=\overline{\boldsymbol{F}(t,s)}$ for all $s$,
$t\in\mathbb{R}$ because of the self-adjointness of $F$) thanks to assumption
(i) about $\boldsymbol{F}$, as follows:
$
\left(D_{1,2}^\ell\boldsymbol{F}\right)(s,t)=
\overline{\left(D_{2,1}^\ell\boldsymbol{F}\right)(t,s)}=
\overline{\left(D_{1,2}^\ell\boldsymbol{F}\right)(t,s)}
$
for all $s$, $t\in\mathbb{R}$.)
\par
Further, it is seen from \eqref{nonneg} that, for each $m$, there is the inequality
\begin{equation}\label{mainineq}
\sum_{n\le m}\left|\left[Vu_n\right]^{(\ell)}(s)\right|^2\le
\left(D_{1,2}^\ell\boldsymbol{F}\right)(s,s)\le C_\ell=
\sup_{s\in\mathbb{R}}\left(D_{1,2}^\ell\boldsymbol{F}\right)(s,s)
\quad(s\in\mathbb{R}),
\end{equation}
from which it follows in particular that the series in the integrand of
\eqref{main3} is termwise integrable, implying (via \eqref{main2} and \eqref{q1q2})
that $q_1(Q,m;\ell)=q_3(Q,m;\ell)$ for all $m$ and all $Q$. In turn, this new
identity implies that
\begin{equation}\label{lFst}
\left(D_{1,2}^\ell\boldsymbol{F}\right)(s,t)=\sum_n\left[Vu_n\right]^{(\ell)}(s)
\overline{\left[Vu_n\right]^{(\ell)}(t)}
\end{equation}
almost everywhere in $\mathbb{R}^2$. The series here converges by
\eqref{mainineq} uniformly in each variable separately for all values of the
other, and its sum-function, denoted by $\boldsymbol{S}_\ell$, is therefore a
continuous function of either argument. One can now appeal directly to a
subtle Example~2 from \cite[Ch.~14, pp.~545-546]{Zaanenb}, as applied to
functions $f_1=D_{1,2}^\ell\boldsymbol{F}$ and $f_2=\boldsymbol{S}_\ell$,
in order to be sure that
\begin{equation}\label{Dl12Fss}
\left(D_{1,2}^\ell\boldsymbol{F}\right)(s,s)
=\sum\limits_n\left|\left[Vu_n\right]^{(\ell)}(s)\right|^2\quad \text{for all $s\in\mathbb{R}$.}
\end{equation}
Since $\left(D_{1,2}^\ell\boldsymbol{F}\right)(s,s)\to 0$ as $|s|\to\infty$,
Dini's Monotone Convergence Theorem may now be applied to the 1-point
compactification of $\mathbb{R}$, to yield the conclusion that the series of
\eqref{Dl12Fss} does converge in $C(\mathbb{R},\mathbb{C})$ (compare this
result with property (B)). In particular, it follows that the series
of \eqref{lFst} is converging in $C\left(\mathbb{R}^2,\mathbb{C}\right)$ to
$D_{1,2}^\ell\boldsymbol{F}$. That, in turn, justifies the following
computation
\allowdisplaybreaks
\begin{multline*}
q_1(Q,m;\ell+1)
\\=\left(D_{1,2}^\ell\boldsymbol{F}\right)(b,d)-\left(D_{1,2}^\ell\boldsymbol{F}\right)(b,c)-
\left(D_{1,2}^\ell\boldsymbol{F}\right)(a,d)+\left(D_{1,2}^\ell\boldsymbol{F}\right)(a,c)\\
-\sum_{n\le m}\left(\left[Vu_n\right]^{(\ell)}(b)-\left[Vu_n\right]^{(\ell)}(a)\right)
\left(\overline{\left[Vu_n\right]^{(\ell)}(d)-\left[Vu_n\right]^{(\ell)}(c)}\right)
\\
=\sum_n\left(\left[Vu_n\right]^{(\ell)}(b)\overline{\left[Vu_n\right]^{(\ell)}(d)}
        -\left[Vu_n\right]^{(\ell)}(b)\overline{\left[Vu_n\right]^{(\ell)}(c)}\right.
      \\ \left. -\left[Vu_n\right]^{(\ell)}(a)\overline{\left[Vu_n\right]^{(\ell)}(d)}
        +\left[Vu_n\right]^{(\ell)}(a)\overline{\left[Vu_n\right]^{(\ell)}(c)}\right)
\\-\sum_{n\le m}\left(\left[Vu_n\right]^{(\ell)}(b)-\left[Vu_n\right]^{(\ell)}(a)\right)
\left(\overline{\left[Vu_n\right]^{(\ell)}(d)-\left[Vu_n\right]^{(\ell)}(c)}\right)
\\=\sum_{n>m}\left(\left[Vu_n\right]^{(\ell)}(b)-\left[Vu_n\right]^{(\ell)}(a)\right)
\left(\overline{\left[Vu_n\right]^{(\ell)}(d)-\left[Vu_n\right]^{(\ell)}(c)}\right)
\\=q_2(Q,m;\ell+1),
\end{multline*}
which proves that \eqref{q1q2} with $\ell+1$ instead of $\ell$ holds true for all
$m$ and $Q=[a,b]\times[c,d]$. Therefore, by induction, the identity
\eqref{q1q2} is true for every non-negative integer $\ell$. Hence, as
\eqref{q1q2} implies the $C(\mathbb{R},\mathbb{C})$ convergence of the series
of \eqref{Dl12Fss} by what has just been seen in the course of the induction
step, the first series of (B) converges in $C(\mathbb{R},\mathbb{C})$
for each fixed $i$.
\par
\textsl{Step 3.} In this step, the proof of assertion (b) will be
completed for $\lambda=0$, by showing that, under the conditions laid on
$V$ and $W$ at the beginning of the proof, property (A)
always holds. We shall restrict ourselves to dealing only with the functions
$Vu_n$ ($n\in\mathbb{N}$), because the proof of (A) for $Wu_n$
($n\in\mathbb{N}$) can be obtained in a similar way, but using respectively
$\mathrm{Ran\,}W^*$ and $\boldsymbol{G}$ instead of $\mathrm{Ran\,}V^*$ and
$\boldsymbol{F}$.
\par
Choose $\left\{v_k\right\}$ to be an orthonormal basis for the subspace
$\overline{\mathrm{Ran\,} V^*}$, with the property that
$\left\{v_k\right\}\subset\mathrm{Ran\,}V^*$, and let
$\left\{\widetilde{u}_n\right\}$ be any orthonormal basis for $L^2$ such that
$\left\{v_k\right\}\subset\left\{\widetilde{u}_n\right\}$. Observe that
$v_k\in\mathrm{Ran\,}V^*$ implies $Vv_k=Ff_k$ for some $f_k\in L^2$. Therefore,
by property (ii) for the Carleman function
$\boldsymbol{f}(s)=\overline{\boldsymbol{F}(s,\cdot)}$, there is in the
equivalence class $Vv_k$ a function of $C(\mathbb{R},\mathbb{C})$, namely
$\left[Vv_k\right](s)=\left\langle f_k,\boldsymbol{f}(s)\right\rangle_{L^2}$
($s\in\mathbb{R}$), such that its every derivative,
$\left[Vv_k\right]^{(i)}(\cdot)=
\left\langle f_k,\boldsymbol{f}^{(i)}(\cdot)\right\rangle_{L^2}$,
is also in $C(\mathbb{R},\mathbb{C})$. Then, since every $V\widetilde{u}_n$ is
equal either to $Vv_k$, for some $k$, or to the zero function and
therefore all $V\widetilde{u}_n$ satisfy (A),
the reasoning of Step~2 can be applied to the function system
$\{V\widetilde{u}_n\}$ in place of $\{Vu_n\}$ in order to arrive at the
conclusion that, for each $i$, the series
$\sum_n\left|\left[V\widetilde{u}_n\right]^{(i)}\right|^2$ converges in
$C(\mathbb{R},\mathbb{C})$ (cf. (B)). After that, one can conclude
immediately that each series of the form
$\sum_n\left\langle f,\widetilde{u}_n\right\rangle_{L^2}\left[V\widetilde{u}_n\right]^{(i)}$
$\left(f\in L^2\right)$ also converges in $C(\mathbb{R},\mathbb{C})$, and
hence defines a $C(\mathbb{R},\mathbb{C})$ function, which is nothing else
than $\left[Vf\right]^{(i)}$ because
$Vf=\sum_n\left\langle f,\widetilde{u}_n\right\rangle_{L^2}V\widetilde{u}_n$
in the sense of convergence in $L^2$. In particular,
$\left[Vu_n\right]^{(i)}\in C(\mathbb{R},\mathbb{C})$, for all
$n\in\mathbb{N}$ and all $i$.
\par
\textsl{Step 4.}
Now let $\lambda$ be an arbitrary non-zero regular value for $T$, and
factorize the Fredholm resolvent $T_\lambda$ of $T$ at $\lambda$ in the form
$T_\lambda=W_\lambda V^*$ where $W_\lambda=R_\lambda(T)W$ (see \eqref{eqresf},
\eqref{f1}). This factorization need not be an $\mathcal{M}$ factorization for
$T_\lambda$. If, however, the operator
$G_\lambda=W_\lambda\left(W_\lambda\right)^*$ is known to be integral with a
$K^\infty$ kernel, then the previous three steps of the proof may easily be
adapted, with $W_\lambda$ written instead of $W$, to show that the formulae
\eqref{meijT1}-\eqref{meijkTf3} all hold exactly as stated in the theorem.
\par
Let us therefore focus attention on the operator $G_\lambda$. That this
operator is Carleman follows form the representation
\begin{multline}\label{frres}
G_\lambda=W_\lambda \left(W_\lambda\right)^*
=G +\lambda TR_\lambda(T)G+\bar\lambda G\left(R_\lambda(T)\right)^*T^*
\\
+\left|\lambda\right|^2TR_\lambda(T) G\left(R_\lambda(T)\right)^*T^*
\end{multline}
which may be established with the aid of the equality
$R_\lambda(T)=I_{L^2}+\lambda TR_\lambda(T)$ (cf. \eqref{eqress}), and in which each term is a
Carleman operator by the right-multiplication lemma. In addition, if
$\boldsymbol{g}$ is the Carleman function corresponding with the $K^\infty$
kernel $\boldsymbol{G}$ of the integral operator $G=WW^*$, then the function
$\boldsymbol{g}_\lambda\colon \mathbb{R}\to L^2$ defined by
\begin{multline*}
\boldsymbol{g}_\lambda(\cdot)=\boldsymbol{g}(\cdot)+\bar\lambda
\left(R_\lambda(T) G\right)^*\left(\boldsymbol{t}_0(\cdot)\right)+
\lambda\left(\left(R_\lambda(T)\right)^*T^*\right)^*
\left(\boldsymbol{g}(\cdot)\right)
\\+
\left|\lambda\right|^2\left(R_\lambda(T) G\left(R_\lambda(T)\right)^*T^*\right)^*
\left(\boldsymbol{t}_0(\cdot)\right)
\end{multline*}
can be regarded as a Carleman function associated with the Carleman kernel
of $G_\lambda$, by Proposition~\ref{rimlt} again. Since, for each $i$,
\begin{equation}\label{mecfder}
\boldsymbol{g}^{(i)}, \boldsymbol{t}_0^{(i)},
\left(\boldsymbol{t}^{\boldsymbol{\prime}}_0\right)^{(i)}\in C\left(\mathbb{R},L^2\right),
\end{equation}
it follows that
\begin{multline}\label{mederqp}
\boldsymbol{g}_\lambda^{(i)}(\cdot)=\boldsymbol{g}^{(i)}(\cdot)+\bar\lambda
\left(R_\lambda(T) G\right)^*\left(\boldsymbol{t}_0^{(i)}(\cdot)\right)+
\lambda\left(\left(R_\lambda(T)\right)^*T^*\right)^*
\left(\boldsymbol{g}^{(i)}(\cdot)\right)
\\+
\left|\lambda\right|^2\left(R_\lambda(T)G\left(R_\lambda(T)\right)^*T^*\right)^*
\left(\boldsymbol{t}_0^{(i)}(\cdot)\right)\in C\left(\mathbb{R},L^2\right),
\end{multline}
because the operators $R_\lambda(T)$, $G$, and $T$, are bounded. Analogously,
for each $i$,
\begin{equation}
\begin{gathered}\label{meforpart}
\left(\left(R_\lambda(T)\right)^*(\boldsymbol{t}_0(\cdot))\right)^{(i)}=
\left(R_\lambda(T)\right)^*\left(\boldsymbol{t}_0^{(i)}(\cdot)\right)
\in C\left(\mathbb{R},L^2\right),\\
\left(R_\lambda(T)(\boldsymbol{g}(\cdot))\right)^{(i)}=
R_\lambda(T)\left(\boldsymbol{g}^{(i)}(\cdot)\right)\in
C\left(\mathbb{R},L^2\right),\\
R_\lambda(T) G\left(R_\lambda(T)\right)^*
\left(\boldsymbol{t}_0(\cdot))\right)^{(i)}=
R_\lambda(T) G\left(R_\lambda(T)\right)^*
\left(\boldsymbol{t}_0^{(i)}(\cdot)\right)
\in C\left(\mathbb{R},L^2\right).
\end{gathered}
\end{equation}
Note also that each of the last three terms in \eqref{frres} is a product of
two Carleman operators. From this it becomes possible to express the inducing
Carleman kernel $\boldsymbol{G}_\lambda$ of $G_\lambda$ by means of
convolutions as follows:
\begin{multline*}
\boldsymbol{G}_\lambda(s,t)=
\boldsymbol{G}(s,t)+\lambda\left\langle\boldsymbol{g}(t),
\left(R_\lambda(T)\right)^*\left(\boldsymbol{t}_0(s)\right)\right\rangle_{L^2}+\bar\lambda
\left\langle\boldsymbol{t}^{\boldsymbol{\prime}}_0(t),R_\lambda(T)(\boldsymbol{g}(s))\right\rangle_{L^2}
\\+
\left|\lambda\right|^2\left\langle\boldsymbol{t}^{\boldsymbol{\prime}}_0(t),R_\lambda(T) G
\left(R_\lambda(T)\right)^*\left(\boldsymbol{t}_0(s)\right)\right\rangle_{L^2}.
\end{multline*}
Then, for each fixed $i$, $j$, a partial differentiation of
$\boldsymbol{G}_\lambda$ yields, after the equalities of \eqref{meforpart} are
taken into account, the following equality on $\mathbb{R}^2$:
\begin{multline*}
\left(D_2^jD_1^i\boldsymbol{G}_\lambda\right)(s,t)=
\left(D_2^jD_1^i\boldsymbol{G}\right)(s,t)
+
\lambda\left\langle\boldsymbol{g}^{(j)}(t),
\left(R_\lambda(T)\right)^*\left(\boldsymbol{t}_0^{(i)}(s)\right)\right\rangle_{L^2}
\\+
\bar\lambda\left\langle\left(\boldsymbol{t}^{\boldsymbol{\prime}}_0\right)^{(j)}(t),R_\lambda(T)
\left(\boldsymbol{g}^{(i)}(s)\right)\right\rangle_{L^2}
\\+
\left|\lambda\right|^2\left\langle\left(\boldsymbol{t}^{\boldsymbol{\prime}}_0\right)^{(j)}(t),
R_\lambda(T) G\left(R_\lambda(T)\right)^*\left(\boldsymbol{t}_0^{(i)}(s)\right)
\right\rangle_{L^2};
\end{multline*}
clearly we are free to compute each term on the right-hand side here by
alternatively applying $D_2^1$ and $D_1^1$ in any order we please. Hence,
according to \eqref{mecfder}, \eqref{meforpart}, and by the continuity
of the inner product, the partial derivatives of $\boldsymbol{G}_\lambda$ are
all in $C\left(\mathbb{R}^2,\mathbb{C}\right)$. This acquisition in
conjunction with \eqref{mederqp} implies that $\boldsymbol{G}_\lambda$ is a
$K^\infty$ kernel, thereby completing the proof of assertion (b) of
the theorem.

\centerline{}
\section{Corollaries and Applications} \label{faccol}
\begin{remark}\label{moreuse}
An easier way to see that $G_\lambda=W_\lambda\left(W_\lambda\right)^*$ in
\eqref{frres} is an integral operator with a $K^\infty$ kernel is by making
more use of the assumption in Theorem~\ref{mfactor} that the $K^\infty$ kernel
$\boldsymbol{T}_0$ of $T$ is of Mercer type, that is, that the operator family
$\mathcal{M}(T)$ consists only of integral operators with $K^\infty$ kernels.
The argument might be as follows: the first and last terms in the right-hand
side of \eqref{frres} clearly both belong to $\mathcal{M}^{+}(T)$
(see \eqref{f2} and Remark~\ref{remmt}). The second term
$\lambda TR_\lambda(T)G$, and hence its adjoint
$\bar\lambda G\left(R_\lambda(T)\right)^*T^*$ (which is just the third one),
does belong to $\mathcal{M}(T)$, because $\lambda TR_\lambda(T)G=
\lambda TR_\lambda(T)B^*T^*\in T\mathfrak{R}\left(L^2\right)
\cap\mathfrak{R}\left(L^2\right)T^*\subset\mathcal{M}(T)$ or
$\lambda TR_\lambda(T)G=\lambda TR_\lambda(T)BT\in
T\mathfrak{R}\left(L^2\right)\cap\mathfrak{R}\left(L^2\right)T\subset
\mathcal{M}(T)$ according as $G=TB$ or $G=BT$ (see again Remark~\ref{remmt}).
Each summand on the right of \eqref{frres} is thus an integral operator with
a $K^\infty$ kernel and hence so is $G_\lambda$ itself.
\end{remark}
In addition to Remark~\ref{moreuse} let us confess that also in the course of
the whole proof of assertion (b) in Theorem~\ref{mfactor} we have not
used the full strength of the assumption on the $K^\infty$ kernel
$\boldsymbol{T}_0$ of being of Mercer type, but only a consequence of it,
namely the existence of inducing $K^\infty$ kernels for $F=VV^*$ and $G=WW^*$,
whenever $T=WV^*$ is an $\mathcal{M}$ factorization for $T$.
So what that proof really proves is the following, more general, result
intended for the case where $T=WV^*$ is not necessarily an $\mathcal{M}$
factorization for, at first, not necessarily integral $T$.
\begin{corollary}\label{mfactor3}
Suppose that an operator $T\in\mathfrak{R}\left(L^2\right)$ has a factorization 
into a product $T=WV^*$ $(V, W\in \mathfrak{R}\left(L^2\right))$ such that both 
$F=VV^*$ and $G=WW^*$ are integral operators with $K^\infty$ kernels. Then at 
each regular value $\lambda\in\Pi(T)$ the Fredholm resolvent $T_\lambda$ of $T$
is an integral operator and its kernel $\boldsymbol{T}_\lambda$ is a $K^\infty$ 
kernel. Moreover, for any orthonormal basis $\{u_n\}$ for $L^2$, the following 
formulae hold
\begin{gather}
\left(D_2^jD_1^i\boldsymbol{T}_\lambda\right)(s,t)=
\sum_n\left[R_\lambda(T)Wu_n\right]^{(i)}(s)
\overline{\left[Vu_n\right]^{(j)}(t)}, \label{meijT13}
\\
\begin{split} \label{meijkt23}
\boldsymbol{t}_\lambda^{(i)}(s)&=\sum_n\overline{\left[R_\lambda(T)Wu_n\right]^{(i)}(s)}Vu_n,
\\
\left(\boldsymbol{t}^{\boldsymbol{\prime}}_\lambda\right)^{(j)}(t)&=
\sum_n \overline{\left[Vu_n\right]^{(j)}(t)}R_\lambda(T)Wu_n,
\end{split}
\\
\left[T_\lambda f\right]^{(i)}(s)=
\sum_n\left\langle f,Vu_n\right\rangle_{L^2}\left[R_\lambda(T)Wu_n\right]^{(i)}(s)\label{meijkTf33},
\end{gather}
for all $i$, $j$, all $s$, $t\in\mathbb{R}$, and all $f\in L^2$, where the
series of \eqref{meijT13} converges $\mathbb{C}$-absolutely in
$C\left(\mathbb{R}^2,\mathbb{C}\right)$, the two series  of \eqref{meijkt23}
both converge in $C\left(\mathbb{R},L^2\right)$, and the series of
\eqref{meijkTf33} converges $\mathbb{C}$-absolutely in $C(\mathbb{R},\mathbb{C})$.
\end{corollary}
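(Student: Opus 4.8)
The plan is to re-examine the four-step proof of assertion (b) of Theorem~\ref{mfactor} and verify that its only substantive input concerning $\boldsymbol{T}_0$ was the existence of inducing $K^\infty$ kernels $\boldsymbol{F}$, $\boldsymbol{G}$ for the auxiliary operators $F=VV^*$ and $G=WW^*$ — which is exactly what is now hypothesized. First I would record that the present assumptions are genuinely weaker than those of the theorem: we no longer assume $T$ to be an integral operator, nor that $T=WV^*$ is an $\mathcal{M}$ factorization (the provisos \eqref{f2} are dropped), retaining only that $F$ and $G$ induce $K^\infty$ kernels. So the task is not to invent a new argument but to check that nowhere in Steps 1--4 was more than this used.

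For the case $\lambda=0$, Steps~1--3 never invoked \eqref{f2} or the Mercer-type property as such. Step~1 is a pure convergence-and-termwise-differentiation argument which, granting properties (A) and (B) for $\{Vu_n\}$, $\{Wu_n\}$, builds the $K^\infty$ kernel $\boldsymbol{H}$ of \eqref{me3.21} and, through \eqref{Tf} and \eqref{Kf}, identifies it as the kernel of $T$; this simultaneously reveals $T$ to be an integral operator, a fact now belonging to the conclusion rather than the hypothesis. Step~2 derives (B) from (A) using only $\boldsymbol{F}$ (resp.\ $\boldsymbol{G}$): the $L^2$ expansion of $F$, the identity \eqref{q1q2}, the diagonal nonnegativity \eqref{nonneg}, and Dini's theorem. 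Step~3 establishes (A) using only property (ii) for the Carleman function $\boldsymbol{f}$ of $\boldsymbol{F}$, via $v_k\in\mathrm{Ran\,}V^*$ and $Vv_k=Ff_k$. Hence, with $\boldsymbol{F}$ and $\boldsymbol{G}$ in hand, formulae \eqref{meijT13}--\eqref{meijkTf33} hold at $\lambda=0$ and $\boldsymbol{T}_0=\boldsymbol{H}$ is a $K^\infty$ kernel.

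For an arbitrary $\lambda\in\Pi(T)$ I would factor $T_\lambda=W_\lambda V^*$ with $W_\lambda=R_\lambda(T)W$ and reduce to the $\lambda=0$ case already proved, now applied to the pair $(W_\lambda,V)$. Here $VV^*=F$ is unchanged and still $K^\infty$, so the one thing left to verify is that $G_\lambda=W_\lambda(W_\lambda)^*$ is an integral operator with a $K^\infty$ kernel. This is precisely Step~4: expanding $G_\lambda$ as in \eqref{frres} via $R_\lambda(T)=I_{L^2}+\lambda TR_\lambda(T)$, each summand is a Carleman operator by the right-multiplication lemma (Proposition~\ref{rimlt}), and the strong-derivative computations \eqref{mederqp}--\eqref{meforpart} show that the resulting kernel $\boldsymbol{G}_\lambda$ has all partial derivatives in $C(\mathbb{R}^2,\mathbb{C})$ and associated Carleman functions with all strong derivatives in $C(\mathbb{R},L^2)$. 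Once $G_\lambda$ is known to be $K^\infty$, the $\lambda=0$ machinery applied to $(W_\lambda,V)$ delivers \eqref{meijT13}--\eqref{meijkTf33} with $W_\lambda u_n=R_\lambda(T)Wu_n$ in place of $Wu_n$.

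The main obstacle is exactly this Step~4 verification carried out without the Mercer-type hypothesis: the factorization $T_\lambda=W_\lambda V^*$ need not be an $\mathcal{M}$ factorization for $T_\lambda$, so one cannot simply cite Mercer type to place $G_\lambda$ in $\mathcal{M}^{+}(T_\lambda)$ and conclude it is integral. Instead the smoothness of $\boldsymbol{G}_\lambda$ must be extracted directly from that of $\boldsymbol{G}$, $\boldsymbol{t}_0$, and $\boldsymbol{t}^{\boldsymbol{\prime}}_0$, transported through the bounded operators $R_\lambda(T)$, $G$, $T$ and the continuity of the inner product. This is where the bulk of the bookkeeping in \eqref{frres}--\eqref{meforpart} is spent; but it is a routine consequence of the right-multiplication lemma and introduces no genuinely new idea beyond the proof of Theorem~\ref{mfactor}.
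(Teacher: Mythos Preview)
Your proposal is correct and matches the paper's own approach exactly. The paper's proof of the corollary consists of the single sentence that it is ``exactly the same as the proof given in the previous section for assertion~(b) of Theorem~\ref{mfactor}'', the preceding paragraph having already observed (as you do) that Steps~1--4 used only the $K^\infty$ property of the kernels of $F=VV^*$ and $G=WW^*$, and that Step~4 was deliberately argued via the direct convolution/differentiation computation (rather than the Mercer-type shortcut of Remark~\ref{moreuse}) precisely so that the argument would survive the weakening of hypotheses.
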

The proof is exactly the same as the proof given in the previous section for 
assertion (b) of Theorem~\ref{mfactor}. Corollary~\ref{mfactor3}
also opens a slightly different, but equivalent, way to define a $K^\infty$ kernel
of Mercer type, involving families $\mathcal{M}^{+}(\cdot)$ in place of
$\mathcal{M}(\cdot)$ (cf. Definition~\ref{MerKernel}):
\begin{corollary}\label{cormer}
An operator $T\in\mathfrak{R}\left(L^2\right)$ is an integral operator with a
$K^\infty$ kernel of Mercer type if and only if the family $\mathcal{M}^{+}(T)$
consists only of integral operators with $K^\infty$ kernels.
\end{corollary}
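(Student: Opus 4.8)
The plan is to prove the two implications separately, with the forward one amounting to a set inclusion and essentially all of the substance lying in the converse. For the ``only if'' direction, suppose $T$ is an integral operator with a $K^\infty$ kernel of Mercer type. Then by Definition~\ref{MerKernel} every operator in $\mathcal{M}(T)$ is an integral operator with a $K^\infty$ kernel, and since $\mathcal{M}^{+}(T)\subseteq\mathcal{M}(T)$ by \eqref{Pc}, the smaller family $\mathcal{M}^{+}(T)$ consists \emph{a fortiori} only of integral operators with $K^\infty$ kernels. No further work is required here.

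For the ``if'' direction, assume that $\mathcal{M}^{+}(T)$ consists only of integral operators with $K^\infty$ kernels; the task is to upgrade this to the same conclusion for the larger family $\mathcal{M}(T)$, and in particular for $T$ itself (note $T=TI_{L^2}=I_{L^2}T\in\mathcal{M}(T)$). The strategy is to reduce the claim for an arbitrary $A\in\mathcal{M}(T)$ to the positive members of the family by means of the canonical $\mathcal{M}$ factorization of Example~\ref{TWV} and then invoke Corollary~\ref{mfactor3}. First I would fix any $A\in\mathcal{M}(T)$ and use the monotonicity recorded at the end of Remark~\ref{remmt}: since $A\in\mathcal{M}(T)$, one has $\mathcal{M}^{+}(A)\subseteq\mathcal{M}^{+}(T)$, so every operator of $\mathcal{M}^{+}(A)$ is already known to be an integral operator with a $K^\infty$ kernel. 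Next I would apply Example~\ref{TWV} to the operator $A$ in place of $T$: writing $A=U_A|A|$ for the polar decomposition and $P_A=|A|^{1/2}$, put $W_A=U_AP_A$ and $V_A=P_A$, so that $A=W_AV_A^*$ with, exactly as computed there, $V_AV_A^*=|A|=A^*U_A\in\mathcal{M}^{+}(A)$ and $W_AW_A^*=U_A|A|U_A^*=AU_A^*\in\mathcal{M}^{+}(A)$.

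Combining the two observations, both $F_A=V_AV_A^*$ and $G_A=W_AW_A^*$ lie in $\mathcal{M}^{+}(A)\subseteq\mathcal{M}^{+}(T)$ and are therefore integral operators with $K^\infty$ kernels. This is precisely the hypothesis of Corollary~\ref{mfactor3} for the factorization $A=W_AV_A^*$. Applying that corollary at the regular value $\lambda=0\in\Pi(A)$ (where $R_0(A)=I_{L^2}$ and the Fredholm resolvent $A_0$ equals $A$) yields that $A$ is an integral operator whose kernel is a $K^\infty$ kernel. As $A\in\mathcal{M}(T)$ was arbitrary, every member of $\mathcal{M}(T)$ is an integral operator with a $K^\infty$ kernel; specializing to $A=T$ shows in particular that $T$ itself is such an operator, and Definition~\ref{MerKernel} then certifies that its kernel is of Mercer type.

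The only genuine content, and hence the step deserving the most care, is the verification that the canonical factors $V_AV_A^*$ and $W_AW_A^*$ actually belong to $\mathcal{M}^{+}(A)$ together with the correct bookkeeping of the inclusion $\mathcal{M}^{+}(A)\subseteq\mathcal{M}^{+}(T)$; once these are secured, Corollary~\ref{mfactor3} performs the remaining analytic work automatically, and the passage $\lambda=0$ is immediate. The forward implication is purely formal.
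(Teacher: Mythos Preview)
Your proof is correct and follows essentially the same approach as the paper: the ``only if'' is dispatched via the inclusion $\mathcal{M}^{+}(T)\subset\mathcal{M}(T)$, and the ``if'' takes an arbitrary element of $\mathcal{M}(T)$, produces an $\mathcal{M}$ factorization for it whose factors satisfy $VV^*,\,WW^*\in\mathcal{M}^{+}(A)\subseteq\mathcal{M}^{+}(T)$ via Remark~\ref{remmt}, and then invokes Corollary~\ref{mfactor3} at $\lambda=0$. The only cosmetic difference is that you spell out the concrete polar-decomposition factorization of Example~\ref{TWV}, whereas the paper simply takes ``an $\mathcal{M}$ factorization'' without naming one.
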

\begin{proof}
The ``only if'' is immediate  from the inclusion
$\mathcal{M}^{+}(T)\subset\mathcal{M}(T)$. For the proof of the ``if'', assume
that every operator $A\in\mathcal{M}^{+}(T)$ is an integral operator with a
$K^\infty$ kernel. If $S\in\mathcal{M}(T)$, it is to be proved that $S$ is an
integral operator with a $K^\infty$ kernel. For this, let $S=WV^*$ be an
$\mathcal{M}$ factorization for $S$, where, according to \eqref{f2} and
Remark~\ref{remmt}, $VV^*$, $WW^*\in\mathcal{M}^{+}(S)\subseteq\mathcal{M}^{+}(T)$.
Then it follows by assumption that both $F=VV^*$ and $G=WW^*$ are integral
operators with $K^\infty$ kernels. Apply Corollary~\ref{mfactor3} to the
operator $S=WV^*$ and when $\lambda=0$ conclude that $S$ is an integral
operator with a $K^\infty$ kernel. The corollary is proved.
\end{proof}
\begin{remark}\label{rDost}
The next result, Dostani\'c's \cite{Dost89}-\cite{Dost93} extension of Mercer's
Theorem to a class of continuous non-Hermitian kernels on $[0,1]^2$,
deserves mention because it turns out to also fit into the general bilinear
expansion scheme proved in Section~\ref{prmfactor}.
\begin{proposition}\label{pureDost} If $T=H\left(I_{L^2(0,1)}+S\right)$, where
$H\ge0$, $S=S^*$ are integral operators induced on $L^2(0,1)$ by continuous
kernels on $[0,1]^2$, and if either {\rm(I)} $I_{L^2(0,1)}+S$ is positive and
invertible, or {\rm(II)} $I_{L^2(0,1)}+S$ is invertible and $H$ is one-to-one,
then the kernel $\boldsymbol{T}_0$ of the integral operator $T$ is represented
on $[0,1]^2$ by the absolutely and uniformly convergent series
\begin{equation}\label{Dost:form}
\boldsymbol{T}_0(s,t)=\sum_n \lambda_n\psi_n(s)\overline{\varphi_n(t)},
\end{equation}
where $\{\psi_n\}$ and $\{\varphi_n\}$ are biorthogonal systems of
eigenfunctions for $T$ and for $T^*$, respectively:
$T\psi_n=\lambda_n\psi_n$, $T^*\varphi_m=\lambda_m\varphi_m$,
$\langle\psi_n,\varphi_m\rangle_{L^2(0,1)}=\delta_{nm}$
(Kronecker delta).
\end{proposition}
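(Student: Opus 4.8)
The plan is to realize $T$ through a factorization to which the bilinear machinery of Section~\ref{prmfactor} applies, and then to diagonalize the resulting series by a judicious choice of orthonormal basis. The naive attempt to factor through $(I_{L^2(0,1)}+S)^{1/2}$ fails, because then $VV^*$ would equal $I_{L^2(0,1)}+S$, which is not an integral operator. To avoid the identity term I would instead put $W=H^{1/2}$ (the positive square root of $H$) and $V=(I_{L^2(0,1)}+S)H^{1/2}$, so that $T=WV^*=H^{1/2}\cdot H^{1/2}(I_{L^2(0,1)}+S)=H(I_{L^2(0,1)}+S)$. Then $G=WW^*=H$ and $F=VV^*=(I_{L^2(0,1)}+S)H(I_{L^2(0,1)}+S)=H+SH+HS+SHS$ are both positive, and, being sums of products of integral operators with continuous kernels on $[0,1]^2$, both are themselves integral operators with continuous kernels; moreover $H$, $S$ are Hilbert--Schmidt, hence compact.

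Next I would observe that the proof of assertion (b) of Theorem~\ref{mfactor}, specialized to $\lambda=0$ and to $i=j=0$, carries over essentially verbatim to the present situation, where the underlying space is the compact interval $[0,1]$ and the positive kernels $\boldsymbol{F}$, $\boldsymbol{G}$ are merely continuous rather than $K^\infty$. Indeed, only continuity (not smoothness) of $\boldsymbol{F}$, $\boldsymbol{G}$ is used in Steps~1--3 once the derivative order is fixed at zero; on the compact $[0,1]$ the requirement of vanishing at infinity is vacuous, and Dini's theorem applies directly to $[0,1]$ in place of its one-point compactification. The Mercer-type diagonal argument of Step~2 (with $\ell=0$) and the continuity argument of Step~3 (with $i=0$) thus go through, establishing properties (A) and (B), and hence, for \emph{any} orthonormal basis $\{u_n\}$ of $L^2(0,1)$, the representation $\boldsymbol{T}_0(s,t)=\sum_n[Wu_n](s)\overline{[Vu_n](t)}$, with the series converging absolutely and uniformly on $[0,1]^2$; this is \eqref{firsts} (equivalently \eqref{meijT13} of Corollary~\ref{mfactor3}) read at $\lambda=0$, $i=j=0$.

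It remains to choose $\{u_n\}$ so that the generic term becomes $\lambda_n\psi_n(s)\overline{\varphi_n(t)}$. For this I would take $\{u_n\}$ to be an orthonormal eigenbasis of the compact self-adjoint operator $B=V^*W=H^{1/2}(I_{L^2(0,1)}+S)H^{1/2}$, say $Bu_n=\beta_n u_n$ with $\beta_n$ real (and $\beta_n\ge0$ in case (I)). The identities $WB=TW$ and $VB=T^*V$ then show that $Wu_n=H^{1/2}u_n$ is an eigenfunction of $T$ and $Vu_n=(I_{L^2(0,1)}+S)H^{1/2}u_n$ an eigenfunction of $T^*$, both for the eigenvalue $\beta_n$, while, using $WV=B$, one gets $\langle Wu_n,Vu_m\rangle_{L^2(0,1)}=\langle u_n,Bu_m\rangle_{L^2(0,1)}=\beta_n\delta_{nm}$. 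Writing $\lambda_n=\beta_n$, $\psi_n=Wu_n$, $\varphi_n=\beta_n^{-1}Vu_n$ for the indices with $\beta_n\ne0$ yields biorthogonal eigensystems $T\psi_n=\lambda_n\psi_n$, $T^*\varphi_m=\lambda_m\varphi_m$, $\langle\psi_n,\varphi_m\rangle_{L^2(0,1)}=\delta_{nm}$, and converts the $n$-th term of the series above into $\lambda_n\psi_n(s)\overline{\varphi_n(t)}$, which is precisely \eqref{Dost:form}.

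The step I expect to require the most care is the disposal of the indices with $\beta_n=0$, which is exactly where hypotheses (I) and (II) enter. Under (I) the operator $I_{L^2(0,1)}+S$ is bounded below, so $\beta_n=\langle(I_{L^2(0,1)}+S)H^{1/2}u_n,H^{1/2}u_n\rangle_{L^2(0,1)}=0$ forces $H^{1/2}u_n=0$, whence $Wu_n=Vu_n=0$ and the corresponding term drops out; under (II) the injectivity of $H$ together with the invertibility of $I_{L^2(0,1)}+S$ forces $\ker B=\{0\}$, so no zero eigenvalues occur and the eigenbasis is automatically complete. In either case the discarded terms contribute nothing, the chosen $\{u_n\}$ is a genuine orthonormal basis of $L^2(0,1)$, and the absolute and uniform convergence is inherited unchanged from the bilinear scheme. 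A secondary point to verify is that $Wu_n$ and $Vu_n$ do possess continuous representatives $\psi_n$, $\varphi_n$ (so that \eqref{Dost:form} holds pointwise on $[0,1]^2$); this is furnished by the Step~3 argument applied with the continuous kernels of $G=H$ and of $F$, respectively.
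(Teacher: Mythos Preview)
Your argument is correct and follows the same route the paper sketches in Remark~\ref{rDost}: factor $T=WV^*$ so that $WW^*=H$ and $VV^*=(I+S)H(I+S)$ are positive integral operators with continuous kernels, transplant Steps~1--3 of Section~\ref{prmfactor} (at order zero, over the compact interval) to obtain the absolutely and uniformly convergent expansion $\boldsymbol{T}_0(s,t)=\sum_n [Wu_n](s)\overline{[Vu_n](t)}$, and finally diagonalize by choosing $\{u_n\}$ to be an eigenbasis of the compact self-adjoint operator $V^*W$.

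The one difference worth flagging is that you use the \emph{single} factorization $W=H^{1/2}$, $V=(I+S)H^{1/2}$ for both alternatives, whereas the paper treats case~(I) with the separate pair $W=(I+S)^{-1/2}\Lambda^{1/2}$, $V=(I+S)^{1/2}\Lambda^{1/2}$ (where $\Lambda=(I+S)^{1/2}H(I+S)^{1/2}$) and reserves your factorization for case~(II). Since in both approaches $VV^*$ and $WW^*$ coincide, the bilinear step is identical; the distinction surfaces only in which self-adjoint operator is diagonalized---$(I+S)^{1/2}H(I+S)^{1/2}$ versus $H^{1/2}(I+S)H^{1/2}$---and your handling of the $\beta_n=0$ indices via positivity of $I+S$ in case~(I) and injectivity of $H$ in case~(II) shows that the second choice suffices uniformly. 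This is a mild streamlining of the paper's outline rather than a different method. (One typographical slip: where you write ``using $WV=B$'' you mean $V^*W=B$; the ensuing computation is unaffected.)
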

The proof of this result rests on an eigenvalue-eigenfunction analysis
(see, e.g., \cite[Chapter~5, \S8]{Gohberg:Krejn}) of compact operators
that are self-adjoint with respect to the definite or indefinite inner
product $[f,g]_{L^2(0,1)}=\langle(I_{L^2(0,1)}+S)f,g\rangle_{L_2(0,1)}$
according as case (I) or case (II) is in question.
Alternatively, the result may be proved without direct recourse to that
analysis by using a factorization argument similar to that used for
Theorem~\ref{mfactor}(b);
a possible outline of a proof may be roughly sketched as follows.
First note that the integral operator $T$ in Proposition~\ref{pureDost} has an
$\mathcal{M}$ factorization $T=WV^*$ such that both $VV^*$ and $WW^*$ are
integral operators with continuous kernels on $[0,1]^2$. Indeed, define the
factors by $V=(I_{L^2(0,1)}+S)^{\frac12}\Lambda^{\frac12}$,
$W=(I_{L^2(0,1)}+S)^{-\frac12}\Lambda^{\frac12}$,
where $\Lambda=(I_{L^2(0,1)}+S)^{\frac12}H(I_{L^2(0,1)}+S)^{\frac12}$, in case
(I), or by $V=(I_{L^2(0,1)}+S)H^{\frac12}$, $W=H^{\frac12}$ in case (II);
in both cases, then, $VV^*=(I_{L^2(0,1)}+S)T$, $WW^*=T(I_{L^2(0,1)}+S)^{-1}=H$,
and $T=WV^*$, where the integral operators $H$, $S$, and $T$, are known to
have continuous kernels. Having thus factorized the integral operator $T$,
it can be proved by adapting arguments in Section~\ref{prmfactor} that any
series of the form $\sum_nWu_n(s)\overline{Vu_n(t)}$, where $\{u_n\}$ is an
orthonormal basis in $L^2(0,1)$, converges absolutely and uniformly to the
kernel $\boldsymbol{T}_0$ of $T$. Thus, in order to prove the desired convergence 
behavior of representation
\eqref{Dost:form} in Proposition~\ref{pureDost}, it suffices to prove that
$\lambda_n\psi_n(s)\overline{\varphi_n(t)}=Wu_n(s)\overline{Vu_n(t)}$
($n\in\mathbb{N}$) for some orthonormal basis $\{u_n\}$. 
It is a straightforward calculation to show that in case (I) (resp., (II)) such a $\{u_n\}$ can be
chosen to be an orthonormal basis in $L^2(0,1)$ with respect to which the
 compact, self-adjoint operator
$(I_{L^2(0,1)}+S)^{\frac12}H(I_{L^2(0,1)}+S)^{\frac12}$
(resp., $H^{\frac12}(I_{L^2(0,1)}+S)H^{\frac12}$) diagonalizes (see proof of
Corollary~\ref{gDost} below for more details).
\end{remark}
The following generalization of Proposition~\ref{pureDost} is included as
an application of Corollary~\ref{mfactor3}.
\begin{corollary}\label{gDost}
Let $T=H\left(I_{L^2}+S\right)$, where $0\le H\in\mathfrak{R}\left(L^2\right)$,
$S\in\mathfrak{R}(L^2)$, and both $H$ and $(I_{L^2}+S^*)H(I_{L^2}+S)$ are integral
operators with $K^\infty$ kernels. Then at each regular value
$\lambda\in\Pi(T)$ the Fredholm resolvent $T_\lambda$ of $T$ is an integral
operator whose kernel $\boldsymbol{T}_\lambda$ is a $K^\infty$ kernel. If, in
addition, $\Lambda=H^{\frac12}(I_{L^2}+S)H^{\frac12}$ is a diagonal operator with
diagonal entries $\lambda_1,\lambda_2,\lambda_3,\dots$, then there are
functions  $\psi_n$, $\varphi_n$ ($n\in\mathbb{N}$) in $L^2$ for which the
following formulae hold: $T\psi_n=\lambda_n\psi_n$,
$T^*\varphi_m=\bar\lambda_m\varphi_m$, $\langle\psi_n,\varphi_m\rangle_{L^2}
=\lambda_n\delta_{nm}$ for all $n$, $m\in\mathbb{N}$, and
\begin{gather}
\left(D_2^jD_1^i\boldsymbol{T}_\lambda\right)(s,t)=
\sum_n\frac1{1-\lambda\lambda_n}\left[\psi_n\right]^{(i)}(s)
\overline{\left[\varphi_n\right]^{(j)}(t)}, \label{meijT14}
\\
\begin{split} \label{meijkt24}
\boldsymbol{t}_\lambda^{(i)}(s)&=\sum_n
\frac1{1-\overline{\lambda}\overline{\lambda_n}}
\overline{\left[\psi_n\right]^{(i)}(s)}\varphi_n,
\\
\left(\boldsymbol{t}^{\boldsymbol{\prime}}_\lambda\right)^{(j)}(t)&=
\sum_n\frac1{1-\lambda\lambda_n}
\overline{\left[\varphi_n\right]^{(j)}(t)}\psi_n,
\end{split}
\\
\left[T_\lambda f\right]^{(i)}(s)=
\sum_n\frac1{1-\lambda\lambda_n}\left\langle f,\varphi_n\right\rangle_{L^2}
\left[\psi_n\right]^{(i)}(s)\label{meijkTf34},
\end{gather}
for all $\lambda\in\Pi(T)$, all $i$, $j$, and all $s$, $t\in\mathbb{R}$, and
all $f\in L^2$,  where the series of \eqref{meijT14} converges 
$\mathbb{C}$-absolutely in $C\left(\mathbb{R}^2,\mathbb{C}\right)$, the two 
series  of \eqref{meijkt24} both converge in $C\left(\mathbb{R},L^2\right)$, 
and the series of \eqref{meijkTf34} converges $\mathbb{C}$-absolutely in
$C(\mathbb{R},\mathbb{C})$.
\end{corollary}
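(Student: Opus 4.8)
The plan is to reduce everything to Corollary~\ref{mfactor3} by exhibiting one explicit factorization $T=WV^*$ whose factors $F=VV^*$ and $G=WW^*$ are precisely the two operators that the hypothesis declares to have $K^\infty$ kernels. In the spirit of case (II) of the sketch in Remark~\ref{rDost}, I would set $W=H^{\frac12}$ and $V=(I_{L^2}+S^*)H^{\frac12}$, both bounded since $H^{\frac12}$ is the bounded positive square root of $H$. A direct computation then gives $WV^*=H^{\frac12}\cdot H^{\frac12}(I_{L^2}+S)=H(I_{L^2}+S)=T$, together with $G=WW^*=H$ and $F=VV^*=(I_{L^2}+S^*)H(I_{L^2}+S)$; by hypothesis both $F$ and $G$ are integral operators with $K^\infty$ kernels. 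Feeding this into Corollary~\ref{mfactor3} immediately yields the first assertion (that $T_\lambda$ is an integral operator with a $K^\infty$ kernel at every $\lambda\in\Pi(T)$) and, for any orthonormal basis $\{u_n\}$, the generic formulae \eqref{meijT13}--\eqref{meijkTf33} with $Wu_n=H^{\frac12}u_n$ and $Vu_n=(I_{L^2}+S^*)H^{\frac12}u_n$.

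For the diagonal part I would specialize $\{u_n\}$ to an orthonormal eigenbasis of $\Lambda=H^{\frac12}(I_{L^2}+S)H^{\frac12}$, so that $\Lambda u_n=\lambda_n u_n$, and define $\psi_n=Wu_n$, $\varphi_n=Vu_n$. The eigenrelations follow from three algebraic identities I would record first: $TH^{\frac12}=H^{\frac12}\Lambda$, $V^*W=\Lambda$, and $W^*V=\Lambda^*$. From the first, $T\psi_n=TH^{\frac12}u_n=H^{\frac12}\Lambda u_n=\lambda_n\psi_n$. Since $\Lambda$ is diagonal, hence normal, its eigenvectors satisfy $\Lambda^*u_n=\overline{\lambda_n}u_n$, whence $T^*\varphi_m=VW^*Vu_m=V\Lambda^*u_m=\overline{\lambda_m}\varphi_m$. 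Biorthogonality is read off from the second identity: $\langle\psi_n,\varphi_m\rangle_{L^2}=\langle V^*Wu_n,u_m\rangle_{L^2}=\langle\Lambda u_n,u_m\rangle_{L^2}=\lambda_n\delta_{nm}$.

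Finally, to pass from \eqref{meijT13}--\eqref{meijkTf33} to the explicit forms \eqref{meijT14}--\eqref{meijkTf34}, I would compute the resolvent on the eigenfunctions: since $T\psi_n=\lambda_n\psi_n$, one has $(I_{L^2}-\lambda T)\psi_n=(1-\lambda\lambda_n)\psi_n$, so $R_\lambda(T)Wu_n=R_\lambda(T)\psi_n=(1-\lambda\lambda_n)^{-1}\psi_n$ for each $\lambda\in\Pi(T)$. Substituting this and $Vu_n=\varphi_n$ into \eqref{meijT13}, \eqref{meijkt23}, \eqref{meijkTf33} reproduces exactly the scalar weights $(1-\lambda\lambda_n)^{-1}$ (respectively its conjugate in the first line of \eqref{meijkt24}), and the convergence statements are inherited verbatim from Corollary~\ref{mfactor3}. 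I do not expect a serious obstacle: the argument is essentially choosing the right factorization and the right basis and then quoting Corollary~\ref{mfactor3}. The only step needing a little care is $\Lambda^*u_n=\overline{\lambda_n}u_n$, which invokes normality of the diagonal operator $\Lambda$ and is what turns the $\varphi_n$ into genuine eigenfunctions of $T^*$; everything else is bounded-operator algebra requiring no fresh convergence analysis.
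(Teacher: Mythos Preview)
Your proof is correct and is essentially identical to the paper's own argument: the same factorization $W=H^{1/2}$, $V=(I_{L^2}+S^*)H^{1/2}$, the same application of Corollary~\ref{mfactor3}, the same choice of $\{u_n\}$ as an eigenbasis of $\Lambda$, and the same eigenvalue/biorthogonality computations via $\Lambda u_n=\lambda_n u_n$, $\Lambda^*u_n=\overline{\lambda_n}u_n$. The only difference is cosmetic (you package the algebra as the identities $TH^{1/2}=H^{1/2}\Lambda$, $V^*W=\Lambda$, $W^*V=\Lambda^*$ before using them), which is harmless.
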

\begin{proof} Put $W=H^{\frac12}$, $V=(I_{L^2}+S^*)H^{\frac12}$, and
let the orthonormal basis $\{u_n\}$ for $L^2$ allow the diagonal operator
$\Lambda=H^{\frac12}(I_{L^2}+S)H^{\frac12}$ to be written as
\begin{equation}\label{LAMB}
\Lambda=\sum_n \lambda_n\left\langle\cdot,u_n\right\rangle_{L^2} u_n.
\end{equation}
Since, by assumption, both operators $WW^*=H$ and $VV^*=(I_{L^2}+S^*)H(I_{L^2}+S)$
are integral and are defined by $K^\infty$ kernels, Corollary~\ref{mfactor3}
can be applied with respect to factorization $T=H(I_{L^2}+S)=WV^*$ and basis
$\{u_n\}$ to conclude that the Fredholm resolvent $T_\lambda$ of $T$
is an integral operator, with a kernel $\boldsymbol{T}_\lambda$ that is a
$K^\infty$ kernel for which the expansion formulae
\eqref{meijT13}-\eqref{meijkTf33} hold. By the change of notation:
$\psi_n=Wu_n$, $\varphi_n=Vu_n$ ($n\in\mathbb{N}$),
these formulae can be rewritten in forms \eqref{meijT14}-\eqref{meijkTf34},
respectively, because then
$T\psi_n=H(I_{L^2}+S)H^{\frac12}u_n=H^{\frac12}\Lambda u_n=\lambda_n \psi_n$
by \eqref{LAMB}, and therefore
$R_\lambda(T)\psi_n=\frac1{1-\lambda\lambda_n}\psi_n$. Moreover,
$T^*\varphi_n=(I_{L^2}+S^*)HVu_n=(I_{L^2}+S^*)H(I_{L^2}+S^*)H^{\frac12}u_n
=(I_{L^2}+S^*)H^{\frac12}\Lambda^*u_n=\bar\lambda_n \varphi_n$
and  $\left\langle\psi_n,\varphi_m\right\rangle_{L^2}=
\left\langle H^{\frac12}u_n,(I_{L^2}+S^*)H^{\frac12}u_m\right\rangle_{L^2}
=\left\langle\Lambda u_n,u_m\right\rangle_{L^2}
=\lambda_n\left\langle u_n,u_m\right\rangle_{L^2}=\lambda_n\delta_{nm}$
for all $m$, $n\in\mathbb{N}$. The corollary is proved.
\end{proof}
For the next corollary, we recall that a family $\{\varphi_n\}$
of functions in $L^2$ is a \textit{Riesz basis} (see \cite{Gohberg:Krejn}) for $L^2$ if there exist an invertible
operator $A\in\mathfrak{R}(L^2)$ and an orthonormal basis $\{u_n\}$ for $L^2$
such that $\varphi_n=Au_n$ for all $n\in\mathbb{N}$.
\begin{corollary}\label{Riesz}
Let $T\in\mathfrak{R}\left(L^2\right)$ be an integral operator induced
by a $K^\infty$ kernel of Mercer type $\boldsymbol{T}_0$ and suppose that
$T$ is representable as
\begin{equation}\label{merexp}
T=\sum_n \lambda_n\left\langle\cdot,\varphi_n\right\rangle_{L^2}\psi_n,\quad
\{\varphi_n\}\subset \{\varphi^\prime_n\},
\quad \{\psi_n\}\subset \{\psi^\prime_n\},
\end{equation}
where $\{\varphi^\prime_n\}$ and $\{\psi^\prime_n\}$ are Riesz bases for $L^2$
and $\{\lambda_n\}$ is a family of complex numbers. Then
\begin{gather}
\left(D_2^jD_1^i\boldsymbol{T}_0\right)(s,t)=
\sum_n\lambda_n\left[\psi_n\right]^{(i)}(s)
\overline{\left[\varphi_n\right]^{(j)}(t)}, \label{meijT15}
\\
\boldsymbol{t}_0^{(i)}(s)=
\sum_n\overline{\lambda_n\left[\psi_n\right]^{(i)}(s)}\varphi_n,
\quad
\left(\boldsymbol{t}^{\boldsymbol{\prime}}_0\right)^{(j)}(t)=\sum_n\lambda_n
\overline{\left[\varphi_n\right]^{(j)}(t)}\psi_n,\label{meijkt25}
\\
\left[Tf\right]^{(i)}(s)=\sum_n\lambda_n\left\langle f,\varphi_n\right\rangle_{L^2}
\left[\psi_n\right]^{(i)}(s)\label{meijkTf35},
\end{gather}
for all $i$, $j$, and all $s$, $t\in\mathbb{R}$, and all $f\in L^2$, where
the series  of \eqref{meijT15} converges $\mathbb{C}$-absolutely in
$C\left(\mathbb{R}^2,\mathbb{C}\right)$, the two series of \eqref{meijkt25}
both converge in $C\left(\mathbb{R},L^2\right)$, and the series of
\eqref{meijkTf35} converges $\mathbb{C}$-absolutely in
$C(\mathbb{R},\mathbb{C})$.
\end{corollary}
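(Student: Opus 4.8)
The plan is to deduce the formulae from Corollary~\ref{mfactor3} by exhibiting a factorization $T=WV^*$ whose factors $F=VV^*$, $G=WW^*$ are integral operators with $K^\infty$ kernels, and then reading off \eqref{meijT15}--\eqref{meijkTf35} from \eqref{meijT13}--\eqref{meijkTf33} at the always-regular value $\lambda=0$. First I would record the Riesz-basis bookkeeping. Write $\lambda_n=|\lambda_n|e^{i\theta_n}$. A subfamily of a Riesz basis is a Riesz sequence, so $\{\varphi_n\}$ and $\{\psi_n\}$ are simultaneously Bessel and bounded below in $L^2$, and each admits a biorthogonal Riesz sequence in its closed linear span. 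Testing the bounded operator $T$ in \eqref{merexp} against the biorthogonal system $\{\widetilde{\varphi}_n\}$ of $\{\varphi_n\}$ gives $T\widetilde{\varphi}_m=\lambda_m\psi_m$, whence $|\lambda_m|\le\|T\|\sup_k\|\widetilde{\varphi}_k\|_{L^2}/\inf_k\|\psi_k\|_{L^2}$; thus $\{\lambda_n\}$ is bounded.

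The decisive choice is a \emph{symmetric} distribution of $\lambda_n$ across the two factors. Fix an orthonormal basis $\{u_n\}$ for $L^2$ and define $V,W\in\mathfrak{R}(L^2)$ by
\[
Vu_n=|\lambda_n|^{\frac12}\varphi_n,\qquad Wu_n=|\lambda_n|^{\frac12}e^{i\theta_n}\psi_n
\]
(with the $n$-th vector understood to be $0$ whenever $\lambda_n=0$); these extend to bounded operators because $\{\varphi_n\}$, $\{\psi_n\}$ are Bessel and $\{\lambda_n\}$ is bounded. A direct computation then yields $WV^*=\sum_n\lambda_n\langle\cdot,\varphi_n\rangle_{L^2}\psi_n=T$, together with
\[
VV^*=\sum_n|\lambda_n|\langle\cdot,\varphi_n\rangle_{L^2}\varphi_n\ge0,\qquad WW^*=\sum_n|\lambda_n|\langle\cdot,\psi_n\rangle_{L^2}\psi_n\ge0.
\]

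Next I would place $VV^*$ and $WW^*$ inside $\mathcal{M}^{+}(T)$, which is the only available route to the $K^\infty$ property. Here the symmetric split pays off. The assignment $B\psi_n=e^{-i\theta_n}\varphi_n$, extended by $0$ on $(\mathrm{Span}\{\psi_n\})^{\perp}$, carries the Riesz sequence $\{\psi_n\}$ to the Bessel sequence $\{e^{-i\theta_n}\varphi_n\}$ and is therefore \emph{bounded}, precisely because its coefficients have modulus one; one checks $BT=VV^*$. Symmetrically, $D\varphi_n=e^{i\theta_n}\psi_n$ defines a bounded operator with $DT^*=WW^*$. Hence $VV^*\in\mathfrak{R}(L^2)T$ and $WW^*\in\mathfrak{R}(L^2)T^*$ are positive, so by the characterizations collected in Remark~\ref{remmt} (using $\mathcal{M}^{+}(T)=\mathcal{M}^{+}(T^*)$) both lie in $\mathcal{M}^{+}(T)$. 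Since $\boldsymbol{T}_0$ is of Mercer type, Corollary~\ref{cormer} then guarantees that $F=VV^*$ and $G=WW^*$ are integral operators with $K^\infty$ kernels. Applying Corollary~\ref{mfactor3} to the factorization $T=WV^*$ and the basis $\{u_n\}$ at $\lambda=0$, where $R_0(T)=I_{L^2}$, and substituting the values of $Vu_n$, $Wu_n$ collapses each product, e.g. $\left[Wu_n\right]^{(i)}(s)\overline{\left[Vu_n\right]^{(j)}(t)}=\lambda_n\left[\psi_n\right]^{(i)}(s)\overline{\left[\varphi_n\right]^{(j)}(t)}$, so that \eqref{meijT13}--\eqref{meijkTf33} turn verbatim into \eqref{meijT15}--\eqref{meijkTf35} with the convergence assertions inherited unchanged.

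The main obstacle is exactly the membership $VV^*,WW^*\in\mathcal{M}^{+}(T)$. A naive split such as $Wu_n=\lambda_n\psi_n$, $Vu_n=\varphi_n$ would force the intertwining operator to act by $\psi_n\mapsto\lambda_n^{-1}\varphi_n$, which fails to be bounded as soon as $\lambda_n\to0$; it is the symmetric square-root split that keeps every intertwining coefficient of modulus one, hence the operators $B$, $D$ bounded, and thereby secures the factorization hypotheses of Corollary~\ref{mfactor3}.
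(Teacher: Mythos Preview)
Your argument is correct and follows essentially the same approach as the paper's own proof: both distribute $\lambda_n$ symmetrically via the square-root split $Vu_n=|\lambda_n|^{1/2}\varphi_n$, $Wu_n=|\lambda_n|^{1/2}e^{i\theta_n}\psi_n$, verify that $T=WV^*$ is an $\mathcal{M}$ factorization, and then read off \eqref{meijT15}--\eqref{meijkTf35} from the main bilinear expansion result at $\lambda=0$. The only packaging difference is in how the membership $VV^*,WW^*\in\mathcal{M}^{+}(T)$ is verified: the paper writes $T=A\Lambda B$ with $A,B$ the \emph{invertible} operators coming from the ambient Riesz bases $\{\psi'_n\},\{\varphi'_n\}$, sets $W=AU|\Lambda|^{1/2}$, $V=B^*|\Lambda|^{1/2}$ (with $\Lambda=U|\Lambda|$), and uses $A^{-1},B^{-1}$ to exhibit $WW^*=TB^{-1}U^*A^*$ and $VV^*=B^*U^*A^{-1}T$; you instead build the bounded intertwiners $B,D$ directly on the Riesz sequences $\{\psi_n\},\{\varphi_n\}$, which is a slightly more hands-on but equivalent route to the same conclusion.
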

\begin{proof} Assume, with no loss of generality, that
$\lambda_n=\left|\lambda_n\right|e^{\imath\theta_n}\not=0$ for all
$n\in\mathbb{N}$. Since both
$\left\{\varphi^\prime_n\right\}$ and $\left\{\psi^\prime_n\right\}$
are Riesz bases, it follows that there exist two invertible
operators $A$, $B\in\mathfrak{R}(L^2)$ and an orthonormal basis $\{u_n\}$
for $L^2$ such that $T=A\Lambda B$, where
\begin{equation}\label{ALB}
\Lambda=\sum_n \lambda_n\left\langle\cdot,u_{k_n}\right\rangle_{L^2}u_{m_n},
\quad u_{m_n}=A^{-1}\psi_n,\quad u_{k_n}=\left(B^*\right)^{-1}\varphi_n
\ (n\in\mathbb{N})
\end{equation}
for some subsequences $\{m_n\}$, $\{k_n\}$ of the sequence $\{n\}_{n=1}^\infty$.
Introduce operators $U$, $\left|\Lambda\right|
\in\mathfrak{R}\left(L^2\right)$, defined by
\begin{equation*}
U=\sum_n e^{\imath\theta_n}\left\langle\cdot,u_{k_n}\right\rangle_{L^2}  u_{m_n},\quad
\left|\Lambda\right|=\sum_n \left|\lambda_n\right|\left\langle\cdot,u_{k_n}\right\rangle_{L^2}  u_{k_n},
\end{equation*}
and it is clear that  $\Lambda=U\left|\Lambda\right|$ and
$\left|\Lambda\right|\ge0$. If
$W=AU\left|\Lambda\right|^{\frac1{2}}$, $V=B^*\left|\Lambda\right|^{\frac1{2}}$,
then $WW^*=AU\left|\Lambda\right| U^*A^*=TB^{-1}U^*A^*\in\mathcal{M}^{+}(T)$,
$VV^*=B^*\left|\Lambda\right| B=B^*U^*A^{-1}T\in\mathcal{M}^{+}(T)$, and
$T=A\Lambda B=AU\left|\Lambda\right|^{\frac1{2}}\left|\Lambda\right|^{\frac1{2}}B=WV^*$,
that is to say $T=WV^*$ is an $\mathcal{M}$ factorization for $T$.
In addition to this, for each $r\in\mathbb{N}$,
$Wu_r=\delta_{rk_n}\left|\lambda_n\right|^{\frac12}AUu_{k_n}
     =\delta_{rk_n}e^{\imath\theta_n}\left|\lambda_n\right|^{\frac12}Au_{m_n}
     =\delta_{rk_n}e^{\imath\theta_n}\left|\lambda_n\right|^{\frac12}\psi_n$,
and $Vu_r=B^*\left|\Lambda\right|^{\frac12}u_r
    =\delta_{rk_n}\left|\lambda_n\right|^{\frac12}B^*u_{k_n}
    =\delta_{rk_n}\left|\lambda_n\right|^{\frac12}\varphi_n$,
by \eqref{ALB}. Then $\left[Wu_r\right](s)\overline{\left[Vu_r\right](t)}
=\delta_{rk_n}\lambda_n[\psi_n](s)\overline{[\varphi_n](t)}$
for all $s$, $t\in\mathbb{R}$, so that formulae \eqref{meijT15}-\eqref{meijkTf35}
stated in the theorem are already implied by corresponding formulae \eqref{meijT1}-\eqref{meijkTf3}
in Theorem~\ref{mfactor}(b) when the latter is applied, with $\lambda=0$,
to the above $\mathcal{M}$ factorization $T=WV^*$ and basis $\{u_n\}$.
The corollary is proved.
\end{proof}
\begin{remark}
Since, at each fixed regular value $\lambda\in\Pi(T)$, the set
$\left\{R_\lambda(T)\psi^\prime_n\right\}$ remains a Riesz basis for $L^2$ and,
by Theorem~\ref{mfactor}(a), the resolvent kernel
$\boldsymbol{T}_\lambda$ for $\boldsymbol{T}_0$ is a $K^\infty$ kernel of
Mercer type, formulae \eqref{meijT15}-\eqref{meijkTf35} continue to hold with
$0$ and $\psi_n$ replaced respectively by $\lambda$ and $R_\lambda(T)\psi_n$,
due to the same Corollary~\ref{Riesz}.
\end{remark}
\begin{remark}
When the Riesz bases $\{\varphi^\prime_n\}$ and $\{\psi^\prime_n\}$ are both
orthonormal and when $\lambda_n\to0$ as $n\to\infty$, the assumed representation
\eqref{merexp} for $T$ is strongly reminiscent of Schmidt's decomposition
for compact operators (see \cite{Gohberg:Krejn}, and \eqref{JSchdec} below).
For any $T\in\mathfrak{R}(\mathcal{H})$, meanwhile, there is a generalized
Schmidt's decomposition due to I.~Feny\"o \cite{Fenyo}:
\begin{proposition}\label{fenyo}
For every operator $T\in \mathfrak{R}(\mathcal{H})$ there exist an orthonormal 
basis $\{x_n\}\subset\mathrm{Ran\,}T$ for $\overline{\mathrm{Ran\,}T}$,
an orthonormal basis $\{y_n\}\subset \mathrm{Ran\,}T^*$ for
$\overline{\mathrm{Ran\,}T^*}$,
and bounded number sequences $\{\varkappa_n\}$, $\{\mu_n\}$ in $\mathbb{C}$,
such that, for each  $f\in\mathcal{H}$,
\begin{equation}\label{meT1}
Tf=\sum_n\alpha_n\langle f,v_n\rangle_{\mathcal{H}} x_n,\quad
Tf=\sum_{n}\beta_n\langle f,y_n\rangle_{\mathcal{H}} w_n
\end{equation}
in the sense of convergence in $\mathcal{H}$, where
\begin{gather*}
\alpha_n=\sqrt{|\varkappa_n|^2+|\mu_{n-1}|^2},\quad
v_n=\frac1{\alpha_n}\left(\overline \varkappa_ny_n+\overline \mu_{n-1} y_{n-1}\right)
\ (\mu_0=0),
\\
\beta_n=\sqrt{|\varkappa_n|^2+|\mu_n|^2},\quad
w_n=\frac1{\beta_n}\left(\varkappa_nx_n+\mu_nx_{n+1}\right).
\end{gather*}
\end{proposition}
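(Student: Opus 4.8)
The plan is to reduce the statement to the construction of two orthonormal bases that put $T$ into \emph{bidiagonal} form, and then to read off the two series from that form. Concretely, I would produce orthonormal bases $\{x_n\}$ for $\overline{\mathrm{Ran\,}T}$ and $\{y_n\}$ for $\overline{\mathrm{Ran\,}T^*}$, with $x_n\in\mathrm{Ran\,}T$ and $y_n\in\mathrm{Ran\,}T^*$, satisfying the mutually adjoint recurrences
\[
Ty_n=\varkappa_n x_n+\mu_n x_{n+1},\qquad
T^*x_n=\overline{\varkappa_n}\,y_n+\overline{\mu_{n-1}}\,y_{n-1}\quad(\mu_0=0).
\]
Granting these, both representations in \eqref{meT1} are immediate. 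Since $Tf\in\overline{\mathrm{Ran\,}T}$, its Fourier expansion in $\{x_n\}$ is $Tf=\sum_n\langle Tf,x_n\rangle_{\mathcal H}x_n$, and $\langle Tf,x_n\rangle_{\mathcal H}=\langle f,T^*x_n\rangle_{\mathcal H}=\varkappa_n\langle f,y_n\rangle_{\mathcal H}+\mu_{n-1}\langle f,y_{n-1}\rangle_{\mathcal H}=\alpha_n\langle f,v_n\rangle_{\mathcal H}$, which is the first formula. For the second, expand $f=\sum_n\langle f,y_n\rangle_{\mathcal H}y_n$ modulo $(\mathrm{Ran\,}T^*)^\perp=\ker T$ and apply the bounded operator $T$ termwise, giving $Tf=\sum_n\langle f,y_n\rangle_{\mathcal H}Ty_n=\sum_n\beta_n\langle f,y_n\rangle_{\mathcal H}w_n$; convergence is just the continuity of $T$.

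The bidiagonal pair $\{x_n\}$, $\{y_n\}$ I would build by the Golub--Kahan--Lanczos process. Start from any unit $x_1\in\mathrm{Ran\,}T$ and set $y_1=T^*x_1/\|T^*x_1\|_{\mathcal H}$, $\varkappa_1=\|T^*x_1\|_{\mathcal H}$ (nonzero because $\overline{\mathrm{Ran\,}T}\cap\ker T^*=\{0\}$). Inductively, having $x_1,\dots,x_n$ and $y_1,\dots,y_n$, put $\mu_n=\|Ty_n-\varkappa_n x_n\|_{\mathcal H}$ and $x_{n+1}=(Ty_n-\varkappa_n x_n)/\mu_n$, then $\varkappa_{n+1}=\|T^*x_{n+1}-\overline{\mu_n}y_n\|_{\mathcal H}$ and $y_{n+1}=(T^*x_{n+1}-\overline{\mu_n}y_n)/\varkappa_{n+1}$. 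A short computation using the relations already in force shows that each newly formed vector is orthogonal to all its predecessors and that the two recurrences above hold, with the coefficients automatically real and nonnegative. Moreover every $x_n$ is a finite combination of $x_1$ and vectors $Ty_k$, hence lies in $\mathrm{Ran\,}T$, and dually $y_n\in\mathrm{Ran\,}T^*$; from $|\varkappa_n|^2+|\mu_n|^2=\|Ty_n\|_{\mathcal H}^2\le\|T\|^2$ the sequences $\{\varkappa_n\}$, $\{\mu_n\}$ are bounded, as required.

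The hard part is completeness, since a single Lanczos chain generally spans only a $T$-cyclic piece of the space and the recurrence may break down when some $\mu_n=0$. I would handle both at once by a restarting scheme: whenever the chain terminates (that is, $Ty_n=\varkappa_n x_n$, so $\mu_n=0$), I resume it with a fresh unit vector $x_{n+1}\in\mathrm{Ran\,}T$ chosen orthogonal to $x_1,\dots,x_n$, which is possible because the orthogonal projection of $\mathrm{Ran\,}T$ off $\mathrm{Span}\{x_1,\dots,x_n\}$ stays inside $\mathrm{Ran\,}T$ and is dense in the unexhausted part of $\overline{\mathrm{Ran\,}T}$. One checks that such a restart vector satisfies $T^*x_{n+1}\perp y_1,\dots,y_n$, because $Ty_k\in\mathrm{Span}\{x_k,x_{k+1}\}$ for $k<n$ and $Ty_n=\varkappa_n x_n$ at a junction, so the recurrences and all orthogonality relations persist across restarts, the junction being merely recorded as $\mu_n=0$.

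To force $\{x_n\}$ to exhaust $\overline{\mathrm{Ran\,}T}$, I would steer the restarts by a fixed orthonormal basis $\{e_m\}$ of $\overline{\mathrm{Ran\,}T}$, always restarting toward the first $e_m$ not yet in the current span; separability then gives $\overline{\mathrm{Span}}\{x_n\}=\overline{\mathrm{Ran\,}T}$. Completeness of $\{y_n\}$ comes for free: the recurrences give $T^*\,\overline{\mathrm{Span}}\{x_n\}\subseteq\overline{\mathrm{Span}}\{y_n\}$, and since $\mathrm{Ran\,}T^*=T^*(\overline{\mathrm{Ran\,}T})$ (because $\ker T^*=(\overline{\mathrm{Ran\,}T})^\perp$), completeness of $\{x_n\}$ propagates to $\{y_n\}$. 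This interleaving of the Lanczos recurrence with orthogonality-preserving restarts is the only delicate point; once it is in place, the decomposition \eqref{meT1} follows exactly as in the first paragraph.
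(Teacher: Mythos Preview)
Your recursive construction coincides with the one the paper describes (following Feny\H{o}): alternate between the rules $\mu_{n-1}x_n=Ty_{n-1}-\varkappa_{n-1}x_{n-1}$ and $\overline{\varkappa_n}\,y_n=T^*x_n-\overline{\mu_{n-1}}\,y_{n-1}$, with a free choice of $x_n$ only when the first right-hand side vanishes. The paper offers no proof of its own here, referring instead to Feny\H{o}'s article, so in approach you and the paper agree.

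There is, however, a genuine gap in your completeness argument. You propose to exhaust $\overline{\mathrm{Ran\,}T}$ by steering the restarts toward a fixed dense sequence, but a restart is available only when $\mu_n=0$, and nothing forces this ever to occur. A short induction on your own recurrences shows that $\mathrm{Span}(\{x_n\})$ is exactly the cyclic subspace generated by $x_1$ under $TT^*$; hence if $TT^*$ has spectral multiplicity exceeding $1$, that subspace is proper for \emph{every} choice of $x_1$, while the chain may still run forever with all $\mu_n\ne0$. Concretely, let $T$ be multiplication by the independent variable on $L^2[0,1]\oplus L^2[0,1]$ and start from $x_1=(1,0)$: the chain reproduces the classical orthogonal-polynomial recursion in the first summand, never breaks, and spans only $L^2[0,1]\oplus\{0\}$. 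Your scheme then needs at least two \emph{infinite} chains, and these cannot be spliced into a single $\mathbb{N}$-indexed sequence without destroying the bidiagonal link $Ty_n\in\mathrm{Span}(\{x_n,x_{n+1}\})$ between consecutive indices. The restart mechanism by itself is therefore insufficient; closing the gap requires whatever additional device Feny\H{o}'s original argument supplies, which neither your sketch nor the paper's summary reproduces.
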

The elements $x_n$, $y_n\in\mathcal{H}$ ($n\in\mathbb{N}$) and the numbers
$\varkappa_n$, $\mu_n\in\mathbb{C}$ ($n\in\mathbb{N}$), whose existence the
proposition guarantees, can be determined simultaneously in a recursive way,
as follows. Let $\varkappa_0=0$, and let $x_0=y_0=0$. If, for some positive
integer $n$, the subset $\{x_j\}_{j=0}^{n-1}$ of $\mathrm{Ran\,}T$, the
element $y_{n-1}\in\mathcal{H}$, and the number $\varkappa_{n-1}\in\mathbb{C}$,
are already defined, and if
\begin{equation}\label{mespanker}
\mathrm{Span}\left(\{x_j\}_{j=0}^{n-1}\right)\not=\overline{\mathrm{Ran\,}T},
\end{equation}
then let the number $\mu_{n-1}$ and the element $x_n$ of $\mathrm{Ran\,}T$ be
chosen to satisfy
\begin{equation}\label{merules1}
\mu_{n-1}x_n=Ty_{n-1}-\varkappa_{n-1}x_{n-1}
\end{equation}
subject to the restrictions
$\sum_{j=0}^{n-1}|\langle x_n,x_j\rangle_{\mathcal{H}}|^2=0$, 
$\|x_n\|_{\mathcal{H}}=1$.
Next, having made the proper choice of $\mu_{n-1}$ and $x_n$, let
$\varkappa_n\in\mathbb{C}$ and $y_n\in\mathcal{H}$ be defined from
$
\overline \varkappa_ny_n=T^*x_n-\overline \mu_{n-1}y_{n-1}
$
provided that $\|y_n\|_{\mathcal{H}}=1$. Either this process of determining
$x_n$, $y_n$, $\mu_{n-1}$, and
$\varkappa_n$, from the previously defined $\{x_j\}_{j=0}^{n-1}$, $y_{n-1}$,
and $\varkappa_{n-1}$, is repeated indefinitely with unending growth of $n$,
or it is terminated whenever inequality \eqref{mespanker} fails to hold. In
any event, the process may generate different sequences $\{x_n\}$, $\{y_n\}$,
$\{\varkappa_n\}$, and $\{\mu_n\}$, because of a freedom in choosing $x_n$
when the right side of \eqref{merules1} happens to be zero. \textit{We do not yet know
whether, given a bi-integral operator $T$ on $\mathcal{H}=L^2$, there always
exist operators $U_1$, $U_2\in\mathfrak{R}(L^2)$ such that $U_1v_n=x_n$ and
$U_2w_n=y_n$ for all $n\in\mathbb{N}$, with the notation of \eqref{meT1}.} If so, then an application of
Theorem~\ref{mfactor}(b) might yield the following result.
\begin{corollary}\label{mefenyo}
If $T\in\mathfrak{R}(L^2)$ is an integral operator with a $K^\infty$ kernel
$\boldsymbol{T}_0$ of Mercer type, then, with the notations of
Proposition~\ref{fenyo},
\begin{gather}
\begin{split}\label{meijT16}
\left(D_2^jD_1^i\boldsymbol{T}_0\right)(s,t)=
\sum_n\alpha_n\left[x_n\right]^{(i)}(s)
\overline{\left[v_n\right]^{(j)}(t)},
\\
\left(D_2^jD_1^i\boldsymbol{T}_0\right)(s,t)=
\sum_n\beta_n\left[w_n\right]^{(i)}(s)
\overline{\left[y_n\right]^{(j)}(t)},
\end{split}
\\
\begin{split}\label{meijkt26}
\boldsymbol{t}_0^{(i)}(s)=
\sum_n\overline{\alpha_n\left[x_n\right]^{(i)}(s)}v_n,
\quad
\left(\boldsymbol{t}^{\boldsymbol{\prime}}_0\right)^{(j)}(t)=\sum_n\alpha_n
\overline{\left[v_n\right]^{(j)}(t)}x_n,
\\
\boldsymbol{t}_0^{(i)}(s)=
\sum_n\overline{\beta_n\left[w_n\right]^{(i)}(s)}y_n,
\quad
\left(\boldsymbol{t}^{\boldsymbol{\prime}}_0\right)^{(j)}(t)=\sum_n\beta_n
\overline{\left[y_n\right]^{(j)}(t)}w_n,
\end{split}
\\
\begin{split}\label{meijkTf36}
\left[Tf\right]^{(i)}(s)=\sum_n\alpha_n\left\langle f,v_n\right\rangle_{L^2}
\left[x_n\right]^{(i)}(s),
\\
\left[Tf\right]^{(i)}(s)=\sum_n\beta_n\left\langle f,y_n\right\rangle_{L^2}
\left[w_n\right]^{(i)}(s),
\end{split}
\end{gather}
for all $i$, $j$, all $s$, $t\in\mathbb{R}$, and all $f\in L^2$, where
the series  of \eqref{meijT16} converge $\mathbb{C}$-absolutely in
$C\left(\mathbb{R}^2,\mathbb{C}\right)$, the series of \eqref{meijkt26}
converge in $C\left(\mathbb{R},L^2\right)$, and the series of
\eqref{meijkTf36} converge $\mathbb{C}$-absolutely in
$C(\mathbb{R},\mathbb{C})$.
\end{corollary}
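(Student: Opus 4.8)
The plan is to prove the corollary exactly as its setup suggests: granting the auxiliary operators $U_1$, $U_2\in\mathfrak{R}(L^2)$ whose existence is recorded (as currently unproved) in the remark just before the statement, so that $U_1v_n=x_n$ and $U_2w_n=y_n$, I would manufacture from each of the two Feny\"o decompositions in \eqref{meT1} a genuine $\mathcal{M}$ factorization $T=WV^*$ of $T$, and then simply read the displayed formulae off Theorem~\ref{mfactor}(b) specialised to $\lambda=0$ (where $R_\lambda(T)=I_{L^2}$). The two halves of \eqref{meijT16}--\eqref{meijkTf36} are symmetric, so I would spell out the construction for the first ($\alpha_n,x_n,v_n$) decomposition and treat the second ($\beta_n,w_n,y_n$) the same way with the roles of $T$ and $T^*$ interchanged.

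First I would extract the algebraic data. From $Tf=\sum_n\alpha_n\langle f,v_n\rangle_{L^2}x_n$ and the orthonormality of $\{x_n\}$ one reads off $T^*x_n=\alpha_nv_n$, whence, using $U_1v_n=x_n$, the operator $D_1:=U_1T^*=\sum_n\alpha_n\langle\cdot,x_n\rangle_{L^2}x_n$ is positive and lies in $\mathfrak{R}(L^2)T^*$; by Remark~\ref{remmt} (recall $\mathcal{M}(T)=\mathcal{M}(T^*)$) this places $D_1$ in $\mathcal{M}^{+}(T)$, so that, $\boldsymbol{T}_0$ being of Mercer type, $D_1$ is an integral operator with a $K^\infty$ kernel. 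I would then set $W=D_1^{1/2}$ and let $V$ be the bounded operator determined by $Vx_n=\alpha_n^{1/2}v_n$ and $V\equiv 0$ on $(\overline{\mathrm{Ran\,}T})^\perp$. A direct check gives $WV^*=T$, $WW^*=D_1\in\mathcal{M}^{+}(T)$, and $VV^*=\sum_n\alpha_n\langle\cdot,v_n\rangle_{L^2}v_n=BT$, where $B\in\mathfrak{R}(L^2)$ is the operator with $Bx_n=v_n$; hence $VV^*$ is a positive member of $\mathfrak{R}(L^2)T$ and again lies in $\mathcal{M}^{+}(T)$. Thus $T=WV^*$ is an $\mathcal{M}$ factorization in the sense of Definition~\ref{mfac}.

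With this factorization in hand I would take the orthonormal basis $\{u_n\}$ for $L^2$ to be $\{x_n\}$ augmented by any orthonormal basis of $(\overline{\mathrm{Ran\,}T})^\perp$, so that $Wu_n=\alpha_n^{1/2}x_n$ and $Vu_n=\alpha_n^{1/2}v_n$ on the first block of indices while both vanish on the second. Substituting into \eqref{meijT1}, \eqref{meijkt2} and \eqref{meijkTf3} at $\lambda=0$ collapses the products $\alpha_n^{1/2}\cdot\alpha_n^{1/2}=\alpha_n$ and reproduces, term for term and with the stated modes of convergence, the first equalities in \eqref{meijT16}, \eqref{meijkt26} and \eqref{meijkTf36}. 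The second, $\beta$-flavoured, equalities come out identically from the mirror factorization $V=D_2^{1/2}$, $Wy_n=\beta_n^{1/2}w_n$, where $D_2:=U_2T=\sum_n\beta_n\langle\cdot,y_n\rangle_{L^2}y_n\in\mathcal{M}^{+}(T)$ and $\{u_n\}$ now extends $\{y_n\}$; here $VV^*=D_2$ is the summand that uses $U_2$, whereas $WW^*=\sum_n\beta_n\langle\cdot,w_n\rangle_{L^2}w_n=TB'$ with $(B')^*y_n=w_n$ needs only boundedness of the assignment.

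Two points call for genuine care. The lesser one is boundedness: the operators $V$, $B$ and their $\beta$-analogues are well defined precisely because $\{v_n\}$ and $\{w_n\}$ are Bessel sequences, which follows from the orthonormality of $\{y_n\}$, $\{x_n\}$ together with the elementary bounds $|\varkappa_n|\le\alpha_n$, $|\mu_n|\le\alpha_{n+1}$ (resp. $|\varkappa_n|,|\mu_n|\le\beta_n$) and the boundedness of $\{\varkappa_n\}$, $\{\mu_n\}$, via a one-line triangle-inequality estimate in $\ell^2$; each of these operators thus maps an orthonormal basis to a Bessel sequence and is therefore bounded. The real obstacle, however, is not analytic but the standing hypothesis itself: the steps $D_1=U_1T^*$ and $D_2=U_2T$ are exactly what force the passages $v_n\mapsto x_n$ and $w_n\mapsto y_n$ to be realised by \emph{bounded} operators, and since $\{v_n\}$, $\{w_n\}$ need be neither orthonormal nor bounded below, this cannot be arranged for free. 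Consequently the corollary, and the argument outlined here, is conditional on the existence of $U_1$, $U_2$, which for a general bi-integral $T$ the author leaves open.
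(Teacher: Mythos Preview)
Your approach is precisely what the paper intends. Note that the paper does \emph{not} actually prove this corollary: it is stated only conditionally, preceded by the sentence ``If so, then an application of Theorem~\ref{mfactor}(b) might yield the following result,'' where ``if so'' refers exactly to the open existence of bounded $U_1,U_2$ with $U_1v_n=x_n$, $U_2w_n=y_n$. Your write-up is a correct and honest elaboration of that single hint: you build, from each Feny\"o decomposition, an $\mathcal{M}$~factorization $T=WV^*$ and then read off \eqref{meijT16}--\eqref{meijkTf36} from Theorem~\ref{mfactor}(b) at $\lambda=0$, and you flag the conditional nature at the end. The algebraic checks ($D_1=U_1T^*=TU_1^*\in\mathcal{M}^+(T)$, $VV^*=BT\in\mathcal{M}^+(T)$ via the Bessel property of $\{v_n\}$, and the mirror versions with $D_2=U_2T$) are all sound, and your Bessel estimate for $\{v_n\}$, $\{w_n\}$ from $|\varkappa_n|,|\mu_{n-1}|\le\alpha_n$ (resp.\ $|\varkappa_n|,|\mu_n|\le\beta_n$) is correct.

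One small point worth tightening in the $\beta$-half: you should say explicitly that $W$ is defined to vanish on $(\overline{\mathrm{Ran}\,T^*})^\perp$, and that its boundedness follows from the boundedness of $\{\beta_n\}$ together with the Bessel property of $\{w_n\}$ (so that $y_n\mapsto\beta_n^{1/2}w_n$ factors through a bounded diagonal and a bounded synthesis operator). Other than that, nothing is missing beyond what the paper itself leaves open.
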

\end{remark}

\section{Proof of Theorem~\ref{infsmooth}}\label{intrepr}
The proof is broken up into three steps. The first step is to specify a pair
of orthonormal bases, $\left\{f_n\right\}$ for $\mathcal{H}$ and
$\left\{u_n\right\}$ for $L^2$. The second step is to define a unitary
operator from $\mathcal{H}$ onto $L_2$ by sending in a suitable manner the
basis $\{f_n\}$ onto the basis $\{u_n\}$. This operator is suggested as $U$
in the theorem, and the third step of the proof is a straightforward
verification that it is indeed as desired.
\par
\textsl{Step 1.} Let
$\mathcal{S}=\left\{S_\gamma\right\}_{\gamma\in\mathcal{G}}
\subset\mathfrak{R}(\mathcal{H})$ be a family satisfying \eqref{1.2} for some
orthonormal sequence $\left\{e_n\right\}_{n=1}^\infty$ in $\mathcal{H}$. Let
us suppose that we have a pair of orthonormal bases: $\left\{f_n\right\}$ for
$\mathcal{H}$ and $\left\{u_n\right\}$ for $L^2$, where the latter has the
property that, for each $i$, the $i$-th derivative, $[u_n]^{(i)}$, of $[u_n]$
belongs to $C(\mathbb{R},\mathbb{C})$:
\begin{equation}\label{1}
\left[u_n\right]^{(i)}\in C(\mathbb{R},\mathbb{C})\quad(n\in\mathbb{N}).
\end{equation}
Let us suppose further that each of these bases can be subdivided into two
infinite subsequences: $\left\{f_n\right\}$ into $\{x_k\}_{k=1}^\infty$ and
$\{y_k\}_{k=1}^\infty$, while $\left\{u_n\right\}$ into $\{g_k\}_{k=1}^\infty$
and $\{h_k\}_{k=1}^\infty$, such that
\begin{equation}\label{sumdxk}
\sum_k d(x_k)\le 2
\end{equation}
and, for each $i$,
\begin{gather}
\label{hki}
\sum_k \left\|[h_k]^{(i)}\right\|_{C(\mathbb{R},\mathbb{C})}<\infty,
\\ \label{zndn}
\sum_k d(x_k)\left\|[g_k]^{(i)}\right\|_{C(\mathbb{R},\mathbb{C})}<\infty,
\end{gather}
where
\begin{equation}\label{dxk}
d(x_k):=2\left(\sup_{\gamma\in\mathcal{G}}\left\|S_\gamma x_k\right\|_{\mathcal{H}}^{1/4}+
\sup_{\gamma\in\mathcal{G}}\left\|\left(S_\gamma\right)^*x_k\right\|_{\mathcal{H}}^{1/4}\right)
\quad(k\in\mathbb{N}).
\end{equation}
The proof will make use of the orthonormal bases $\left\{f_n\right\}$,
$\left\{u_n\right\}$ just described to define the action of the desired
unitary operator $U\colon\mathcal{H}\to L^2$ in the theorem.
By using the assumption \eqref{1.2} and some facts from the
theory of wavelets, the following example affirmatively answers the existence
question for such a pair of orthonormal bases.
\begin{example}
Let $\mathbb{N}$ be decomposed into two infinite subsequences
$\left\{l(k)\right\}_{k=1}^\infty$ and $\left\{m(k)\right\}_{k=1}^\infty$, and
let $\{u_n\}$ be an orthonormal basis for $L^2$ that has property \eqref{1},
and such that, for each $i$,
\begin{equation}\label{2}
\left\|\left[u_n\right]^{(i)}\right\|_{C(\mathbb{R},\mathbb{C})}\le N_nD_i
\quad(n\in\mathbb{N}),
\end{equation}
where $\{N_n\}_{n=1}^\infty$, $\{D_i\}_{i=0}^\infty$ are two sequences
of positive reals, the first of which is subject to the restriction that
\begin{equation}\label{3}
\sum_kN_{l(k)}<\infty.
\end{equation}
\par
Then the orthonormal basis $\{u_n\}$ can be paired (in the sense above) with
an orthonormal basis $\{f_n\}$ for $\mathcal{H}$. Indeed, from conditions
\eqref{2}, \eqref{3} it follows that the requirement \eqref{hki} may be
satisfied for all $i$ by taking $h_k=u_{l(k)}$ ($k\in\mathbb{N}$). At the same
time, on account of \eqref{1.2} and of \eqref{2}, the sequence
$\left\{e_k\right\}_{k=1}^\infty$ does always have an infinite subsequence
$\left\{x_k\right\}_{k=1}^\infty$ that satisfies both the requirement
\eqref{sumdxk} and the requirement \eqref{zndn} for each $i$  with
$g_k=u_{m(k)}$ ($k\in\mathbb{N}$). Once such a subsequence
$\left\{x_k\right\}_{k=1}^\infty$ of $\left\{e_k\right\}_{k=1}^\infty$ has
been fixed, the remaining task is simply to complete the $x_k$'s to an
orthonormal basis, and to let $y_k$ ($k\in\mathbb{N}$) denote the new elements
of that basis. Now $\{f_n\}=\{x_k\}_{k=1}^\infty\cup\{y_k\}_{k=1}^\infty$ and
$\{u_n\}=\{g_k\}_{k=1}^\infty\cup\{h_k\}_{k=1}^\infty$ constitute a pair of
orthonormal bases satisfying \eqref{1}-\eqref{dxk}.
\par
In turn, an explicit example of a basis $\{u_n\}$ that obeys the above three
conditions \eqref{1}, \eqref{2}, and \eqref{3}, can be adopted from the
wavelet theory, as follows. Let $\psi$ be the Lemari\'e-Meyer wavelet,
\begin{equation*}\label{wave}
[\psi](s)=\dfrac1{2\pi}\int_{\mathbb{R}}e^{\imath\xi(\frac12+s)}
\mathrm{sgn\,}\xi b(\left|\xi\right|)\,d\xi\quad (s\in\mathbb{R})
\end{equation*}
with the bell function $b$ being infinitely smooth and compactly supported on
$[0,+\infty)$ (see \cite[Example~D, p.~62]{Her:Wei} for details). Then
$[\psi]$ is of the Schwartz class $\mathcal{S}(\mathbb{R})$, so its every
derivative $\left[\psi\right]^{(i)}$ is in $C(\mathbb{R},\mathbb{C})$.
In addition, the ``mother wavelet'' $\psi$ generates an orthonormal basis
$\left\{\psi_{\alpha\beta}\right\}_{\alpha,\,\beta\in\mathbb{Z}}$ for $L^2$ by
\begin{equation*}\label{ujk}
\psi_{\alpha\beta}=2^{\frac \alpha2}\psi(2^\alpha\cdot-\beta)\quad  (\alpha,\,\beta\in\mathbb{Z}).
\end{equation*}
\par
In any manner, rearrange the two-indexed set
$\{\psi_{\alpha\beta}\}_{\alpha,\,\beta\in\mathbb{Z}}$ into a simple sequence,
so that it looks like $\left\{u_n\right\}_{n=1}^\infty$; clearly each term
here, $u_n$, has property \eqref{1} for each $i$. Besides, it is easily
verified that a norm estimate like \eqref{2} holds for each $[u_n]^{(i)}$:
the factors in the right-hand side of \eqref{2} may be taken as
\begin{equation*}
N_n=\begin{cases}
2^{\alpha_n^2}&\text{if $\alpha_n>0$,}\\
2^{\alpha_n/2}&\text{if $\alpha_n\le0$,}
\end{cases}
\qquad D_i=2^{\left(i+1/2\right)^2}\left\|[\psi]^{(i)}\right\|_{C(\mathbb{R},\mathbb{C})},
\end{equation*}
with the convention that $u_n=\psi_{\alpha_n\beta_n}$ ($n\in\mathbb{N}$)
in conformity with that rearrangement. This choice of $N_n$ also gives
$\sum_kN_{l(k)}<\infty$ (cf. \eqref{3}) whenever
$\left\{l(k)\right\}_{k=1}^\infty$ is a subsequence of
$\left\{l\right\}_{l=1}^\infty$ satisfying $\alpha_{l(k)}\to -\infty$ as
$k\to\infty$.
\end{example}
\par
\textsl{Step 2}.
In this step our intention is to construct a candidate for the desired unitary 
operator $U\colon\mathcal{H}\to L^2$ in the theorem. Recalling that
\begin{equation}
\begin{gathered}\label{SET}
\{f_1,f_2,f_3,\dots\}=\{x_1,x_2,x_3,\dots\}\cup\left\{y_1,y_2,y_3,\dots\right\},
\\
\{u_1,u_2,u_3,\dots\}=\{g_1,g_2,g_3,\dots\}\cup\left\{h_1,h_2,h_3,\dots\right\},
\end{gathered}
\end{equation}
where the unions are disjoint, define the unitary operator $U$ from
$\mathcal{H}$ onto $L^2$ by setting
\begin{equation}\label{2.12}
Ux_k=g_k,\quad Uy_k=h_k
\quad\text{for all $k\in \mathbb{N}$,}
\end{equation}
in the harmless assumption that $Uf_n=u_n$ for all $n\in\mathbb{N}$.
\par
\textsl{Step 3.} This step of the proof is to show that, in fact, if the
unitary operator $U\colon \mathcal{H}\to L^2$ is defined as in \eqref{2.12},
then the operators $US_{\gamma}U^{-1}\colon L^2\to L^2$
($\gamma\in\mathcal{G}$) are all simultaneously bi-Carleman operators with
$K^\infty$ kernels of Mercer type.
\par
Fix, to begin with, an arbitrary index $\gamma\in\mathcal{G}$, abbreviate
$S_\gamma$ to $S$, and let $T=USU^{-1}$. It is to be proved that $T$ is an
integral operator with a $K^\infty$ kernel of Mercer type. The idea is first
to conveniently split the operators $S$, $S^*$, each into two parts as follows.
If $E$ is the orthogonal projection of $\mathcal{H}$ onto
$\mathrm{Span}\left(\left\{x_k\right\}_{k=1}^\infty\right)$, write
\begin{equation}\label{2.7}
S=(I_\mathcal{H}-E)S+ES,\quad S^*=(I_\mathcal{H}-E)S^*+ES^*.
\end{equation}
It is an immediate consequence of \eqref{sumdxk} and \eqref{SET} that the
operators $J=S E$ and $\widetilde{J}=S^* E$ are Hilbert-Schmidt. Write down
their Schmidt's decompositions
\begin{equation}\label{JSchdec}
J=\sum_n s_{n}\left\langle\cdot,p_{n}\right\rangle_{\mathcal{H}}q_{n},
\quad\widetilde{J}=\sum_n \widetilde{s}_{n}\left\langle\cdot,\widetilde{p}_{n}
\right\rangle_{\mathcal{H}}\widetilde{q}_{n},
\end{equation}
and introduce compact operators $B$ and $\widetilde{B}$, which are defined by
\begin{equation}\label{aux}
B=\sum_n s_{n}^{\frac1{4}}
\left\langle\cdot,p_{n}\right\rangle_{\mathcal{H}} q_{n},
\quad
\widetilde{B}=\sum_n \widetilde{s}_{n}^{\frac1{4}}\left\langle\cdot,
\widetilde{p}_{n}\right\rangle_{\mathcal{H}}\widetilde{q}_{n};
\end{equation}
here the $s_{n}$ are the singular values of $J$ (eigenvalues of
$\left(J J^*\right)^{\frac1{2}}$), $\left\{p_n\right\}$, $\left\{q_n\right\}$
are orthonormal sets in $\mathcal{H}$ (the $p_{n}$ are eigenvectors for $J^*J$
and $q_{n}$ are eigenvectors for $JJ^*$). The explanation of the notation for
$\widetilde{J}$ is similar.
\par
For each $f\in\mathcal{H}$, let
\begin{equation}\label{2.8n}
c(f):=
\left\|Bf\right\|_{\mathcal{H}}
+
\left\|B^*f\right\|_{\mathcal{H}}
+
\left\|\widetilde{B}f\right\|_{\mathcal{H}}
+
\left\|\left(\widetilde{B}\right)^*f\right\|_{\mathcal{H}}. 
\end{equation}
Applying Schwarz's inequality yields
\allowdisplaybreaks
\begin{equation}\label{2.8}
\begin{split}
c(x_k)&=
\sqrt{\sum_ns_n^{\frac12}
\left|\left\langle x_k,p_n\right\rangle_{\mathcal{H}}\right|^2}
+
\sqrt{\sum_ns_n^{\frac12}
\left|\left\langle x_k,q_n\right\rangle_{\mathcal{H}}\right|^2}
\\&\qquad
+
\sqrt{\sum_n\widetilde{s}_n^{\frac12}
\left|\left\langle x_k,\widetilde{p}_n\right\rangle_{\mathcal{H}}\right|^2}
+
\sqrt{\sum_n\widetilde{s}_n^{\frac12}
\left|\left\langle x_k,\widetilde{q}_n\right\rangle_{\mathcal{H}}\right|^2}
\\&
=
\left\|\left(J^*J\right)^{\frac18}x_k\right\|_{\mathcal{H}}
+
\left\|\left(JJ^*\right)^{\frac18}x_k\right\|_{\mathcal{H}}
\\&\qquad
+
\left\|\left(\left(\widetilde{J}\right)^*\widetilde{J}\right)^{\frac18}x_k\right\|_{\mathcal{H}}
+
\left\|\left(\widetilde{J}\left(\widetilde{J}\right)^*\right)^{\frac18}x_k\right\|_{\mathcal{H}}
\\&
\leq
\left\|J x_k\right\|_{\mathcal{H}}^{\frac14}
+
\left\|J^*x_k\right\|_{\mathcal{H}}^{\frac14}
+
\left\|\widetilde{J}x_k\right\|_{\mathcal{H}}^{\frac14}
+
\left\|\left(\widetilde{J}\right)^*x_k\right\|_{\mathcal{H}}^{\frac14}
\\&\qquad
\le 2\left(\left\|Sx_k\right\|^{\frac14}+
\left\|S^*x_k\right\|^{\frac14}\right)\le d(x_k)\quad(k\in\mathbb{N}),
\end{split}
\end{equation}
whence it follows again from \eqref{sumdxk} and \eqref{SET} that $B$ and
$\widetilde{B}$ are both Hilbert-Schmidt operators, implying
\begin{equation}\label{2.9}
\sum_ns_n^{\frac1{2}}<\infty,\quad
\sum_n\widetilde{s}_n^{\frac1{2}}<\infty.
\end{equation}
\par
Now define $Q=(I_\mathcal{H}-E)S^*$, $\widetilde{Q}=(I_\mathcal{H}-E)S$. Since $\{y_k\}$ is, by
construction, an orthonormal basis for the subspace $(I_\mathcal{H}-E)\mathcal{H}$, it
follows that
\begin{equation}
\begin{gathered}\label{2.10}
Qf=\sum_k\left\langle Qf,y_k\right\rangle_{\mathcal{H}}y_k
=
\sum_k\left\langle f,Sy_k\right\rangle_{\mathcal{H}}y_k,\\
\widetilde{Q}f=\sum_k\left\langle\widetilde{Q}f,y_k\right\rangle_{\mathcal{H}}y_k
=
\sum_k\left\langle f,S^* y_k\right\rangle_{\mathcal{H}}y_k
\end{gathered}
\end{equation}
for all $f\in\mathcal{H}$, with the series converging in $\mathcal{H}$.
\par
Having considered the splittings \eqref{2.7}, which now look like
\begin{equation}\label{2.7new}
S=\widetilde{Q}+\left(\widetilde{J}\right)^*,\quad S^*=Q+J^*,
\end{equation}
our next task is to verify that each of the four operators $UQU^{-1}$,
$U\widetilde QU^{-1}$, $UJ^*U^{-1}$, and $U\left(\widetilde J\right)^*U^{-1}$,
is a Carleman operator with a kernel satisfying conditions (i), (ii) in
Definition~\ref{Kmkernel}. The checking is straightforward, and goes by
representing all pertinent kernels and Carleman functions as infinitely smooth
sums of termwise differentiable series of infinitely smooth functions, without
any use of the method of Section~\ref{prmfactor}. Theorems on termwise
differentiation of series will however be used repeatedly (and usually tacitly)
in the subsequent analysis.
\par
If $P=UQU^{-1}$, $\widetilde{P}=U\widetilde{Q}U^{-1}$, then from \eqref{2.10}
and \eqref{2.12} it follows that, for each $f\in L^2$,
\begin{equation}\label{P}
Pf=\sum_k \left\langle f,Th_k\right\rangle_{L^2} h_k,
\quad
\widetilde{P}f=\sum_k\left\langle f,T^*h_k\right\rangle_{L^2}h_k,
\end{equation}
in the sense of convergence in $L^2$. Represent the equivalence classes
$Th_k$, $T^*h_k\in L^2$ ($k\in\mathbb{N}$) by the Fourier expansions with
respect to the orthonormal basis $\{u_n\}$:
\begin{equation*}
Th_k=\sum_n \left\langle y_k,S^*f_n\right\rangle_{\mathcal{H}}u_n,
\quad
T^*h_k=\sum_n \left\langle y_k,Sf_n\right\rangle_{\mathcal{H}} u_n,
\end{equation*}
where the convergence is in the $L^2$ norm. But somewhat more than that can be
said about convergence, namely that, for each fixed $i$, the series
\begin{equation}\label{2.15}
\sum_n \left\langle y_k,S^*f_n\right\rangle_{\mathcal{H}}[u_n]^{(i)}(s),
\quad
\sum_n \left\langle y_k,Sf_n\right\rangle_{\mathcal{H}}[u_n]^{(i)}(s)\quad(k\in\mathbb{N})
\end{equation}
converge in the norm of $C(\mathbb{R},\mathbb{C})$. Indeed, all the series are
dominated everywhere on $\mathbb{R}$ by one series
\begin{equation*}
\sum_n
\left(\left\|S^*f_n\right\|_{\mathcal{H}}+
\left\|Sf_n\right\|_{\mathcal{H}}\right)\left|[u_n]^{(i)}(s)\right|,
\end{equation*}
which is uniformly convergent on $\mathbb{R}$ for the following reason: its
component subseries (cf. \eqref{2.12})
\begin{gather*}
\sum_k\left(\left\|Sx_k\right\|_{\mathcal{H}}+\left\|S^*x_k\right\|_{\mathcal{H}}\right)
\left|[g_k]^{(i)}(s)\right|,\quad
\sum_k\left(\left\|Sy_k\right\|_{\mathcal{H}}+\left\|S^*y_k\right\|_{\mathcal{H}}\right)
\left|[h_k]^{(i)}(s)\right|
\end{gather*}
are uniformly convergent on $\mathbb{R}$ because they are in turn dominated by
the convergent series
\begin{equation}\label{2.16}
\sum_k d(x_k)\left\|[g_k]^{(i)}\right\|_{C(\mathbb{R},\mathbb{C})},
\quad
\sum_k 2\|S\|\left\|[h_k]^{(i)}\right\|_{C(\mathbb{R},\mathbb{C})},
\end{equation}
respectively (see \eqref{dxk}, \eqref{sumdxk}, \eqref{zndn}, and \eqref{hki}).
\par
It is now evident that the pointwise sums of the series of \eqref{2.15} define
functions that belong to $C(\mathbb{R},\mathbb{C})$ and that are none other
than $\left[Th_k\right]^{(i)}$, $\left[T^*h_k\right]^{(i)}$ ($k\in\mathbb{N}$),
respectively. Moreover, the above arguments prove that given any $i$, there
exists a positive constant $C_i$ such that
\begin{equation*}
\left\|\left[Th_k\right]^{(i)}\right\|_{C(\mathbb{R},\mathbb{C})}<C_i,
\quad
\left\|\left[T^*h_k\right]^{(i)}\right\|_{C(\mathbb{R},\mathbb{C})}<C_i,
\end{equation*}
for all $k$. Hence, by \eqref{hki}, it is possible to infer that, for all $i$,
$j$, the series
\begin{equation*}
\sum_k \left[h_k\right]^{(i)}(s)
\overline{\left[Th_k\right]^{(j)}(t)},\quad
\sum_k \left[h_k\right]^{(i)}(s)
\overline{\left[T^*h_k\right]^{(j)}(t)}
\end{equation*}
converge and even absolutely in the norm of
$C\left(\mathbb{R}^2,\mathbb{C}\right)$. This makes it clear that functions
$\boldsymbol{P}$, $\boldsymbol{\widetilde{P}}\colon\mathbb{R}^2\to\mathbb{C}$,
defined by
\begin{equation}\label{2.14}
\boldsymbol{P}(s,t)=\sum_k \left[h_k\right](s)\overline
{\left[Th_k\right](t)},\quad
\boldsymbol{\widetilde{P}}(s,t)=\sum_k\left[h_k\right](s)
\overline{\left[T^*h_k\right](t)},
\end{equation}
satisfy condition (i) in Definition~\ref{Kmkernel}.
\par
Now we prove that (Carleman) functions $\boldsymbol{p}$,
$\boldsymbol{\widetilde{p}}\colon\mathbb{R}\to L^2$, defined by
\begin{equation}\label{pp}
\boldsymbol{p}(s)=\overline{\boldsymbol{P}(s,\cdot)}=\sum_k
\overline{\left[h_k\right](s)}Th_k,\quad
\boldsymbol{\widetilde{p}}(s)=\overline{\boldsymbol{\widetilde{P}}(s,\cdot)}=
\sum_k\overline{\left[h_k\right](s)}T^*h_k,
\end{equation}
are both subject to requirement (ii) in Definition~\ref{Kmkernel}. Indeed, the
series displayed converge absolutely in the $C\left(\mathbb{R},L^2\right)$
norm, because those two series whose terms are respectively
$\left|\left[h_k\right](s)\right|\|Th_k\|_{L^2}$ ($k\in\mathbb{N}$) and
$\left|\left[h_k\right](s)\right|\left\|T^*h_k\right\|_{L^2}$ ($k\in\mathbb{N}$)
are dominated by the second series of \eqref{2.16} with $i=0$. For the
remaining $i$, a similar reasoning implies the same convergence behavior of
the series
$\sum_k\overline{\left[h_k\right]^{(i)}(s)}Th_k$,
$\sum_k\overline{\left[h_k\right]^{(i)}(s)}T^*h_k.$
The asserted property of both Carleman functions $\boldsymbol{p}$ and
$\boldsymbol{\widetilde{p}}$ to satisfy (ii) then follows by the corresponding
theorem on termwise differentiation of series.
\par
From \eqref{hki} it follows that the series of \eqref{P}, viewed as series in
$C(\mathbb{R},\mathbb{C})$, converge (and even absolutely) in
$C(\mathbb{R},\mathbb{C})$ norm, and therefore that their pointwise sums are
none other than $\left[Pf\right]$ and $\left[\widetilde{P}f\right]$,
respectively. On the other hand, the established properties of the series of
\eqref{pp} and of \eqref{2.14} make it possible to write, for each temporarily
fixed $s\in\mathbb{R}$, the following chains of relations
\allowdisplaybreaks
\begin{equation*}
\begin{split}
\sum_k&\left\langle f,Th_{k}\right\rangle_{L^2} \left[h_{k}\right](s)
=\left\langle f,\sum\limits_k\overline{\left[h_{k}\right](s)}Th_{k}\right\rangle_{L^2}
\\&
=\int_\mathbb{R} \left(\sum\limits_k\left[h_{k}\right](s)
         \overline{\left[Th_{k}\right](t)}\right)f(t)\,dt
         =\int_{\mathbb{R}}\boldsymbol{P}(s,t)f(t)\,dt,
\end{split}
\end{equation*}
\begin{equation*}
\begin{split}
\sum_k& \left\langle f,T^*h_{k}\right\rangle_{L^2} \left[h_{k}\right](s)
=\left\langle f,\sum\limits_k\overline{\left[h_{k}\right](s)}T^*h_{k}\right\rangle_{L^2}
\\&
=\int_\mathbb{R} \left(\sum\limits_k\left[h_{k}\right](s)
         \overline{\left[T^*h_{k}\right](t)}\right)f(t)\,dt=\int_{\mathbb{R}}\boldsymbol{\widetilde{P}}(s,t)f(t)\,dt
\end{split}
\end{equation*}
whenever $f$ is in $L^2$. These imply that $P$ and $\widetilde{P}$ are
Carleman integral operators, the kernels of which are $\boldsymbol{P}$ and
$\boldsymbol{\widetilde{P}}$ respectively, both are subject to requirements (i),
(ii) in Definition~\ref{Kmkernel} by what precedes.
\par
Now we consider the Hilbert-Schmidt integral operators $F=UJ^*U^{-1}$ and
$\widetilde{F}=U\left(\widetilde{J}\right)^*U^{-1}$ on $L^2$, and prove that
the kernels of these operators are $K^\infty$ kernels. Associate with Schmidt's
decompositions \eqref{JSchdec} for $J$, $\widetilde{J}$ two functions
$\boldsymbol{F}$, $\boldsymbol{\widetilde{F}}\colon\mathbb{R}^2\to\mathbb{C}$,
defined by
\begin{equation}\label{2.17}
\begin{split}
\boldsymbol{F}(s,t)
=\sum_n s_n^{\frac1{2}}\left[U B^*q_n\right] (s)
&\overline{\left[U Bp_n\right](t)}
\\&\left(=\sum_n s_n \left[U p_n\right](s)\overline{\left[U q_n\right](t)}\right),
\\
\boldsymbol{\widetilde{F}}(s,t)
=\sum_n\widetilde{s}_n^{\frac1{2}}\left[U\left(\widetilde{B}\right)^*\widetilde{q}_n
\right](s)&\overline{\left[U\widetilde{B}\widetilde{p}_n\right](t)}
\\&\left(=\sum_n\widetilde{s}_n\left[U\widetilde{p}_n\right](s)
\overline{\left[U\widetilde{q}_n\right](t)}\right),
\end{split}
\end{equation}
whenever $s$, $t$ are in $\mathbb{R}$; for the auxiliary operators $B$,
$\widetilde{B}$ here used, see \eqref{aux}. It is to be noted that without the
square brackets the bilinear series just written do converge almost everywhere
on $\mathbb{R}^2$ to Hilbert-Schmidt kernels that induce respectively $F$ and
$\widetilde{F}$ (see \eqref{2.9}). Hence, and again because of \eqref{2.9},
the conclusion that the above-defined functions $\boldsymbol{F}$
and $\boldsymbol{\widetilde{F}}$ are the kernels of $F$ and $\widetilde{F}$,
respectively, and are subject to condition (i) of Definition~\ref{Kmkernel}
can be inferred as soon as it is known that for each fixed $i$ the terms of
the sequences
\begin{equation}\label{jsequences}
\left\{\left[U Bp_k\right]^{(i)}\right\},
\quad
\left\{\left[U B^*q_k\right]^{(i)}\right\},
\quad
\left\{\left[U\widetilde{B}\widetilde{p}_k\right]^{(i)}\right\},
\quad
\left\{\left[U\left(\widetilde{B}\right)^*\widetilde{q}_k\right]^{(i)}\right\}
\end{equation}
make sense, are all in $C(\mathbb{R},\mathbb{C})$, and their
$C(\mathbb{R},\mathbb{C})$ norms are bounded, regardless of $k$. To see that
the conditions just listed are all fulfilled, it suffices to observe that,
once $i$ is fixed, all the series
\begin{equation}\label{Fseries}
\begin{gathered}
\sum_n\left\langle p_k,B^*f_n\right\rangle_{\mathcal{H}}\left[u_n\right]^{(i)}(s),\quad
\sum_n\left\langle q_k,Bf_n\right\rangle_{\mathcal{H}} \left[u_n\right]^{(i)}(s),\quad
\\\sum_n\left\langle\widetilde{p}_k,\left(\widetilde{B}\right)^*f_n\right\rangle_{\mathcal{H}} \left[u_n\right]^{(i)}(s),\quad
\sum_n\left\langle\widetilde{q}_k,\widetilde{B}f_n\right\rangle_{\mathcal{H}}
\left[u_n\right]^{(i)}(s)
\quad(k\in\mathbb{N})
\end{gathered}
\end{equation}
(which, with $i=0$, are merely the  $L^2$-convergent Fourier expansions, with
respect to the orthonormal basis $\{u_n\}$, for $UBp_k$, $UB^*q_k$,
$U \widetilde{B}\widetilde{p}_k$, and $U(\widetilde{B})^*\widetilde{q}_k$,
respectively) are dominated by one series
\begin{equation*}
\sum_nc(f_n)\left|\left[u_n\right]^{(i)}(s)\right|,
\end{equation*}
where $c(f)$ is defined in \eqref{2.8n} above. This last series is uniformly
convergent on $\mathbb{R}$, because it is composed of the two subseries
\begin{equation*}
\sum_nc(x_k)\left|\left[g_k\right]^{(i)}(s)\right|,
\quad
\sum_nc(y_k)\left|\left[h_k\right]^{(i)}(s)\right|
\end{equation*}
that converge uniformly in $\mathbb{R}$, having as their dominant series the
convergent series
\begin{equation*}
\sum_kd(x_k)\left\|[g_k]^{(i)}\right\|_{C(\mathbb{R},\mathbb{C})},
\quad
\sum_k2\left(\|B\|+\left\|\widetilde{B}\right\|\right)\left\|[h_k]^{(i)}\right\|_{C(\mathbb{R},\mathbb{C})}
\end{equation*}
(see \eqref{2.8}, \eqref{zndn}, \eqref{hki}). Thus, for each $i$, all the
series in \eqref{Fseries} converge (and even absolutely) in the
$C(\mathbb{R},\mathbb{C})$ norm, and their sums are none other than,
respectively, $[UBp_k]^{(i)}$, $\left[UB^*q_k\right]^{(i)}$,
$\left[U\widetilde{B}\widetilde{p}_k\right]^{(i)}$, and
$\left[U(\widetilde{B})^*\widetilde{q}_k\right]^{(i)}$, ($k\in\mathbb{N}$).
Then, in virtue of \eqref{2.9} and the above-established boundedness of the
sequences of \eqref{jsequences} in $C(\mathbb{R},\mathbb{C})$, the series
\allowdisplaybreaks
\begin{equation*}
\begin{gathered}
\sum_ns_n^{\frac12}\left[UB^*q_n\right]^{(i)}(s)\overline{\left[UBp_n\right]^{(j)}(t)},
\quad
\sum_n\widetilde{s}_n^{\frac12}\left[U\left(\widetilde{B}\right)^*
\widetilde{q}_n\right]^{(i)}(s)\overline{\left[U\widetilde{B}\widetilde p_n\right]^{(j)}(t)}
\end{gathered}
\end{equation*}
converge (and even absolutely) in $C\left(\mathbb{R}^2,\mathbb{C}\right)$, for all $i$,
$j$. This in conjunction with \eqref{2.17} is sufficient to conclude that both
the functions $\boldsymbol{F}$ and $\boldsymbol{\widetilde{F}}$ satisfy
condition (i) of Definition~\ref{Kmkernel}, and are the Hilbert-Schmidt kernels
of $F$ and of $\widetilde{F}$, respectively.
\par
Again by the properties of the sequences of \eqref{jsequences} and by
\eqref{2.9}, the series
\begin{gather*}\label{}
\sum_n s_n^{\frac1{2}}\overline{\left[U B^*q_n\right]^{(i)}(s)}U Bp_n,
\quad
\sum_n s_n^{\frac12}UB^*q_n\overline{\left[UBp_n\right]^{(i)}(s)},
\\
\sum_n\widetilde{s}_n^{\frac1{2}}\overline{\left[U\left(\widetilde{B}\right)^*\widetilde{q}_n
\right]^{(i)}(s)}U \widetilde{B}\widetilde{p}_n,
\quad
\sum_n \widetilde{s}_n^{\frac12}U\left(\widetilde{B}\right)^*\widetilde{q}_n
\overline{\left[U\widetilde{B}\widetilde{p}_n\right]^{(i)}(s)}
\end{gather*}
converge and even absolutely in the $C\left(\mathbb{R},L^2\right)$ norm, for
each $i$. Observe, via \eqref{2.17}, that four of these series, namely those
with $i=0$, represent the Carleman functions
$\boldsymbol{f}(s)=\overline{\boldsymbol{F}(s,\cdot)}$,
$\boldsymbol{f}^{\boldsymbol{\prime}}(s)=\boldsymbol{F}(\cdot,s)$,
$\boldsymbol{\widetilde{f}}(s)=\overline{\boldsymbol{\widetilde{F}}(s,\cdot)}$,
and $\boldsymbol{\widetilde{f}}^{\boldsymbol{\prime}}(s)=
\boldsymbol{\widetilde{F}}(\cdot,s)$, respectively, which therefore do
satisfy conditions (ii), (iii) in Definition~\ref{Kmkernel}.
This finally implies that the Hilbert-Schmidt kernels $\boldsymbol{F}$ and
$\boldsymbol{\widetilde{F}}$ are $K^\infty$ kernels of $F$ and of $\widetilde{F}$,
respectively.
\par
In accordance with \eqref{2.7new}, the operator $T$ (which, recall, is the
transform by $U$ of $S$) and its adjoint decompose as
$T=\widetilde{P}+\widetilde{F}$, $T^*=P+F$ where all the terms are already
known to be Carleman operators, with kernels satisfying (i), (ii). Hence, both
$T$ and $T^*$ are Carleman operators, and their kernels, say $\boldsymbol{T}$
and $\boldsymbol{T}^{\boldsymbol\ast}$, defined as
\begin{equation}\label{2.18}
\boldsymbol{T}(s,t)=\boldsymbol{\widetilde{P}}(s,t)+\boldsymbol{\widetilde{F}}(s,t),
\quad
\boldsymbol{T}^{\boldsymbol{\ast}}(s,t)=\boldsymbol{P}(s,t)+\boldsymbol{F}(s,t)
\end{equation}
for all $s$, $t\in\mathbb{R}$, inherit the smoothness properties (i), (ii)
from their terms. But then, since the operator $T$ is bi-integral,
$\boldsymbol{T}(s,t)=\overline{\boldsymbol{T}^{\boldsymbol{\ast}}(t,s)}$
for all $s$, $t\in\mathbb{R}$; hence $\boldsymbol{T}(\cdot,t)=
\overline{\boldsymbol{T}^{\boldsymbol{\ast}}(t,\cdot)}$ in the $L^2$ sense
for each fixed $t\in\mathbb{R}$. That implies that the kernel $\boldsymbol{T}$
also satisfies condition (iii) in Definition~\ref{Kmkernel}, and is thus a
$K^\infty$ kernel that induces the bi-Carleman operator $T$.
\par
This $K^\infty$ kernel $\boldsymbol{T}$ is moreover of Mercer type. A main
ingredient in proving the claim is the following remark:
if $d(x_k)$ are defined as in \eqref{dxk}, if $U$ is defined as in
\eqref{2.12}, and if an operator $A\in\mathfrak{R}(\mathcal{H})$ fulfils
\begin{equation*}
2\left(\left\|Ax_k\right\|_{\mathcal{H}}^{\frac1{4}}
+\left\|A^*x_k\right\|_{\mathcal{H}}^{\frac1{4}}\right)\leq
d(x_k)\quad\text{for each $k\in\mathbb{N}$,}
\end{equation*}
then, like $T=USU^{-1}$, the operator $UAU^{-1}$ is a bi-Carleman operator
with a $K^\infty$ kernel, no matter whether $A$ is in the initial family
$\mathcal{S}$ or not. This additional feature of $U$ may be checked by directly
applying the above verification procedure, leading from \eqref{2.7} to
\eqref{2.18}, to the operator $A$ in place of $S$.
\par
If now a nonzero operator $A$ is restricted to lie in the set $\mathcal{M}(S)$,
then whatever its representation from among those in \eqref{relations} is, the
scalar multiple $A_1=\left(16\max\left\{\|M\|,\|N\|\right\}\right)^{-1}\cdot A$
of $A$ obeys the following easily verifiable inequality valid for all
$k\in\mathbb{N}$:
\begin{equation*}
2\left(\left\|A_1x_k\right\|_{\mathcal{H}}^{\frac1{4}}+
\left\|A_1^*x_k\right\|_{\mathcal{H}}^{\frac1{4}}\right)\leq
2\left(\left\|Sx_k\right\|_{\mathcal{H}}^{\frac1{4}}+
\left\|S^*x_k\right\|_{\mathcal{H}}^{\frac1{4}}\right)\leq
d(x_k).
\end{equation*}
By the above remark, this implies that the unitary operator $U$ defined in
\eqref{2.12} carries, besides $S$, every other member, $A$, of $\mathcal{M}(S)$
onto an integral operator, $UAU^{-1}$, with a $K^\infty$ kernel.
Then, since $\mathcal{M}(T)=U\mathcal{M}(S)U^{-1}$, the $K^\infty$ kernel
$\boldsymbol{T}$ of $T$, constructed in \eqref{2.18} above, is of Mercer type
by virtue of the definition. Moreover, since $T=US_\gamma U^{-1}$ where the
index $\gamma$ was arbitrarily fixed, it follows that all the operators
$T_\gamma=US_\gamma U^{-1}$ ($\gamma\in\mathcal{G}$) are bi-Carleman operators
whose kernels are $K^\infty$ kernels of Mercer type.
\par
The only thing that remains to be proved is that those $K^\infty$ kernels which
induce finite linear combinations of $US_\gamma U^{-1}$ ($\gamma\in\mathcal{G}$)
are also of Mercer type. Indeed, consider any finite linear combination
$G=\sum z_\gamma S_\gamma$ with $\sum\left|z_\gamma\right|\leq 1$. It is seen
easily that, for each $n$,
\begin{equation*}
\left\|\sum z_\gamma S_\gamma e_n\right\|_{\mathcal{H}}
\leq\sup_{\gamma\in\mathcal{G}}\left\|S_\gamma e_n\right\|_{\mathcal{H}},
\quad
\left\|\sum\overline{z}_\gamma \left(S_\gamma\right)^* e_n\right\|_{\mathcal{H}}
\leq\sup_{\gamma\in\mathcal{G}}\left\|\left(S_\gamma\right)^*e_n\right\|_{\mathcal{H}}.
\end{equation*}
There is, therefore, no barrier to assuming that $G$ was, from the start, in
the initial family $\mathcal{S}$. The proof of the theorem is now complete.


\begin{thebibliography}{99}


\bibitem{Dost89}
M.R.~Dostani\'c,
Generalizations of Mercer's Theorem for a class of nonselfadjoint operators,
{\it Mat.\ Vesnik}, {\bf 41} (1989), 77-82.

\bibitem{Dost93}
M.R.~Dostani\'c,
Generalization of the Mercer Theorem,
{\it Publications de l'institut math{\'e}matique (Beograd) (N.S.)}, 
{\bf 54} (1993), 63-70.

\bibitem{Fenyo}
I.~Feny{\"o},
{\"U}ber eine Darstellung linearen Operatoren im Hilbertraum,
{\it Publ. Math. Debrecen}, {\bf 25}, No.~1-2 (1978), 123-137.

\bibitem{Gohberg:Krejn}
I.Ts.~Gohberg, M.G.~Kre\u\i n,
{\it Introduction to the Theory of Linear Non-Selfadjoint Operators in
Hilbert Space}, Nauka, Moscow (1965).

\bibitem{Halmos:Sun}
P.~Halmos, V.~Sunder, {\it Bounded Integral Operators on $L^2$ Spaces},
Springer, Berlin (1978).

\bibitem{Her:Wei}
E.~Hern\'andez, G.~Weiss, {\it A First Course on Wavelets}, CRC Press, New
York (1996).

\bibitem{Kadota1}
T.T.~Kadota,
Term-by-term differentiability of Mercer's expansion,
{\it Proc. Amer. Math. Soc.}, {\bf 18} (1967), 69-72.

\bibitem{Kor:book1}
V.B.~Korotkov, {\it Integral Operators}, Nauka, Novosibirsk (1983).

\bibitem{Kor:problems}
V.B.~Korotkov, Some unsolved problems of the theory of integral operators,
{\it Siberian Adv. Math.}, {\bf 7} (1997), 5-17.

\bibitem{Kor:nonint1}
V.B.~Korotkov, On the nonintegrability property of the Fredholm resolvent
of some integral operators, {\it Sibirsk. Mat. Zh.}, {\bf 39} (1998), 905-907;
translated from Russian: {\it Siberian Math. J.}, {\bf 39} (1998), 781-783.

\bibitem{Kor:alg}
V.B.~Korotkov, {\it Introduction to the Algebraic Theory of Integral Operators},
Far-Eastern Branch of the Russian Academy of Sciences, Vladivostok (2000).

\bibitem{Nov:Lon}
I.M.~Novitskii, Integral representations of linear operators by smooth
Carleman kernels of Mercer type, {\it Proc.\ Lond.\ Math.\ Soc.},
{\bf 68}, No.~1 (1994), 161-177.

\bibitem{nov:CEJM}
I.M.~Novitskii,
Integral representations of unbounded operators by infinitely smooth kernels,
{\it Cent. Eur. J. Math.}, {\bf 3} (2005), 654-665.

\bibitem{nov:IJPAM2}
I.M.~Novitskii,
Integral representations of unbounded operators by infinitely 
smooth bi-Carleman kernels,
{\it Int. J. Pure Appl. Math.}, {\bf 54}, No.~3 (2009), 359-374.

\bibitem{nov:IJPAM1}
I.M.~Novitskii,
Unitary equivalence to integral operators and an application,
{\it Int. J. Pure Appl. Math.}, {\bf 50}, No.~2 (2009), 295-300.

\bibitem{Porter}
D.~Porter, D.S.G.~Stirling, {\it Integral Equations, a Practical Treatment,
from Spectral Theory to Applications},
Cambridge University Press, Cambridge (1990).


\bibitem{Neu}
J.~von Neumann, {\it Charakterisierung des Spektrums eines Integraloperators},
Actual. scient. et industr. Nr. 229, Hermann and Cie, Paris (1935).

\bibitem{Zaanenb}
A.C.~Zaanen, {\it Linear Analysis}, North-Holland Publ. Comp.,
Amsterdam-New York-Oxford (1964).

\end{thebibliography}
\end{document}